\documentclass[12pt,reqno]{amsart}

\usepackage{amsfonts}
\usepackage{amscd}
\usepackage{amssymb}
\usepackage{amsfonts}
\usepackage{amscd}
 \usepackage{amsmath} 
\usepackage{mathrsfs}
\usepackage{enumerate}
\usepackage{float} % To use [H]
\usepackage[pdftex]{graphicx}
\allowdisplaybreaks

\usepackage{hyperref}

%\usepackage{enumerate}

%% FOR DRAFTS:e%\usepackage{backref}
%\usepackage[notcite,notref]{showkeys}

\date{\today}

%\usepackage{fullpage}
%\usepackage[margin=1.0in]{geometry}
% ----------------------------------------------------------------
\vfuzz2pt % Don't report over-full v-boxes if over-edge is small
\hfuzz2pt % Don't report over-full h-boxes if over-edge is small
% THEOREMS -------------------------------------------------------
\newtheorem{thm}{Theorem}[section]
\newtheorem{cor}[thm]{\bf{Corollary}}

\newtheorem{prop}[thm]{Proposition}
\theoremstyle{definition}

\theoremstyle{remark}
\newtheorem{rem}[thm]{Remark}

\numberwithin{equation}{section}

\setlength{\textwidth 6.5in} \setlength{\textheight 9.0in}
\voffset -0.7in \hoffset -0.6in

% MATH -----------------------------------------------------------

\newcommand{\R}{\mathbb R}

\newcommand{\Na}{\mathbb N}

\newcommand{\C}{{\mathbb C}}

\newcommand{\om}{ \omega}

\newcommand{\K}{\kappa }

\renewcommand{\Re}{\operatorname{Re}}
\renewcommand{\Im}{\operatorname{Im}}
 
% ----------------------------------------------------------------
% COLOR (for drafts)
\usepackage{color}

\newcommand{\red}[1]{\textcolor{red}{#1}}

\title[Uncertainty principle]
{An uncertainty principle for spectral projections on \\ rank one symmetric spaces of noncompact type }

\author[ Ganguly and Thangavelu]{ Pritam Ganguly and Sundaram Thangavelu}

%\address[L. Roncal]{Departamento de Matem\'aticas y Computaci\'on\\
%    Universidad de La Rioja\\
%    26004 Logro\~no, Spain}
%\email{luz.roncal@unirioja.es}

\address[P. Ganguly, S. Thangavelu]{Department of Mathematics\\
Indian Institute of Science\\
560 012 Bangalore, India}

\email{pitamg@iisc.ac.in, veluma@iisc.ac.in}

 \date{}
 \keywords{Chernoff's theorem, Riemannian symmetric spaces, Helgason Fourier transform,  Dunkl transform,  Ingham's theorem, Spectral projections}
 \subjclass[2010]{Primary:  43A85, 43A25  , Secondary:22E30, 33C45}
 
 %33C45:Orthogonal polynomials and functions of hypergeometric type (Jacobi, Laguerre, Hermite, Askey scheme, etc.) 
  
 %43A25 View Publications (1973-now) Fourier and Fourier-Stieltjes transforms on locally compact and other abelian groups

\begin{document}

\maketitle

\begin{abstract} 
	Let $G $ be a noncompact semisimple Lie group with finite centre. Let $X=G/K$ be the associated Riemannian symmetric space and assume that $X$ is of rank one. The generalized spectral projections associated to the Laplace-Beltrami operator are given by $P_{\lambda}f =f\ast \Phi_{\lambda}$, where $\Phi_{\lambda}$ are the elementary spherical functions on $X$. In this paper, we prove an Ingham type uncertainty principle for $P_{\lambda}f$. Moreover, similar results are obtained in the case of generalized spectral projections associated to Dunkl Laplacian.   
 \end{abstract}

\section{Introduction}
An old paper of Ingham \cite{I} written in 1934 investigates the admissible decay of the Fourier transform of a compactly supported function on $ \R.$ Since any decay of the form $ |\hat{f}(y)| \leq C e^{-a|y|}, a> 0 $ is ruled out, due to the holomorphic extendability of $ f $, Ingham  considered a slightly slower decay  $ |\hat{f}(y)| \leq C e^{-a|y| \, \theta(|y|)}$ where $ \theta:[0,\infty)\rightarrow [0,\infty) $ is a decreasing function vanishing at infinity. He proved that this kind of decay is admissible for a compactly supported function $ f $ if and only if $ \int_1^\infty \theta(t) t^{-1} dt < \infty.$  It follows that under the assumption 
$ \int_1^\infty \theta(t) t^{-1} dt = \infty,$ any nontrivial function $ f $ for which $ |\hat{f}(y)| \leq C e^{-a|y|\, \theta(|y|)}$ cannot vanish on the complement of any compact set. We can view this as a result on the decay of the generalized spectral projections associated to $ \Delta $ on $ \R.$ Indeed, by defining $ P_\lambda f(y) = \hat{f}(\lambda) e^{i\lambda y}+ \hat{f}(-\lambda) e^{-i\lambda y}, $ we see that $ P_\lambda f $ are eigenfunctions of $ \Delta $ with eigenvalues $ -\lambda^2 $ and the decay $ |P_\lambda f(y)| \leq C e^{-a|\lambda| \theta(|\lambda|)}$ where $ \int_1^\infty \theta(t) t^{-1} dt = \infty $ is ruled out if $ f $ vanishes on the complement of any compact set. We remark that in the above statement we can replace the complement of a compact set by any open set $ V.$\\

Ingham's theorem  has received considerable attention in recent years and  analogues have been proved for Fourier series \cite{BSR} and Fourier transforms on symmetric spaces \cite{BPR} and nilpotent Lie groups \cite{BGST, BSR}.  In this note we would like to recast some of the results as uncertainty principles for generalized spectral projections associated to Laplace-Beltrami operators on  Riemannian symmetric spaces. Recall that such a space is of the form $ X = G/K $  where $ G $ is a semisimple Lie group and $ K $ a maximal compact subgroup.  When the sectional curvature of the underlying  Riemannian manifold is positive, which happens when $ G $ is compact, we are in the setting of  compact symmetric spaces. In this case, we have already  proved a version of Ingham's theorem for the spectral projections for the Laplace-Beltrami operator  $ \Delta_X$ in \cite{GT}. In the same work, we have also treated the spectral projections associated to Hermite and special Hermite operators. \\

Here we would like to treat the remaining case of non compact Riemannian symmetric spaces. We first assume that the sectional curvature is negative. The generalized spectral projections  $ P_\lambda f $ associated to $ \Delta_X $  are given by convolutions with elementary spherical functions, thus $ P_\lambda f = f \ast \Phi_\lambda $  where $ \Phi_\lambda $ are the elementary bounded spherical functions on $ X.$ Under the assumption that symmetric space $ X $ is of rank one, we prove the following result.

\begin{thm}
	\label{thm1.1}
Let $ \theta $ be a positive decreasing function defined on $ [0,\infty) $ that vanishes  at infinity. Assume that $ \int_1^\infty \theta(t) \, t^{-1} dt = \infty.$ Let $ X = G/K $ be a rank one symmetric space of non compact type.  Let $ f \in L^1(X) $ be a nontrivial function vanishing on an open set $ V.$  Then 
the estimate  $\displaystyle \sup_{x \in V} |f \ast \Phi_\lambda(x)| \leq C e^{-\lambda  \theta(\lambda)} $ cannot hold uniformly for all $\lambda$.  Moreover, if $V$ contains  the identity, then the uniform estimate $\displaystyle \sup_{x \in V^c} |f \ast \Phi_\lambda(x)| \leq C e^{-\lambda  \theta(\lambda)} $,  is also not possible.   
\end{thm}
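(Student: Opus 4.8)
The plan is to recast both assertions as a single uncertainty principle for the Helgason Fourier transform and then to run a quasi-analyticity (Chernoff-type) argument on $X$. Write $\tilde f(\lambda,b)$ for the Helgason Fourier transform of $f$, where $b\in K/M$. The starting point is the standard identity
$$P_\lambda f(x)=f\ast\Phi_\lambda(x)=\int_{K/M}\tilde f(\lambda,b)\,e^{(i\lambda+\rho)\langle x,b\rangle}\,db,$$
together with the facts that $P_\lambda f$ is an eigenfunction of $\Delta_X$ with eigenvalue $-(\lambda^2+\rho^2)$ and that $f=c_X\int_0^\infty P_\lambda f\,|c(\lambda)|^{-2}\,d\lambda$ by the inversion formula. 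I would argue by contradiction: assuming $f$ is nontrivial, vanishes on $V$, and that the stated uniform estimate holds, I aim to deduce $f\equiv 0$.

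The first and main step is a transfer from pointwise decay of the projections on a set to decay of the spectral data. In the model case $X=\R$ the boundary $K/M$ is the two-point set and the plane waves are the explicit exponentials $e^{\pm i\lambda y}$, so sampling $P_\lambda f$ at finitely many points of the open interval $V$ recovers both $\hat f(\lambda)$ and $\hat f(-\lambda)$ with the same exponential bound; this is exactly the reduction indicated in the introduction. In the general rank one setting I would replace this sampling by a local inversion of the boundary integral: since $V$ is open it contains a neighbourhood adapted to the polar/Iwasawa decomposition, and integrating $P_\lambda f(x)$ against a suitable compactly supported kernel in $x\in V$ against the plane waves $e^{(i\lambda+\rho)\langle x,b\rangle}$ should produce the estimate $\big(\int_{K/M}|\tilde f(\lambda,b)|^2\,db\big)^{1/2}\le C'e^{-\lambda\theta(\lambda)}$, possibly after replacing $\theta$ by a comparable function, which does not affect the divergence hypothesis. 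I expect this transfer to be the principal obstacle, since it must convert a supremum bound over a fixed open set into a global bound over the boundary, and it is where the geometry of the plane waves and the completeness of the eigenfunctions $\{e^{(i\lambda+\rho)\langle x,b\rangle}\}$ enter.

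Granting the transform decay, the remainder is quasi-analyticity. By Plancherel for the Helgason transform,
$$\|\Delta_X^m f\|_2^2=c\int_0^\infty(\lambda^2+\rho^2)^{2m}\Big(\int_{K/M}|\tilde f(\lambda,b)|^2\,db\Big)|c(\lambda)|^{-2}\,d\lambda\le A_m^2,$$
where $A_m^2=c\int_0^\infty(\lambda^2+\rho^2)^{2m}e^{-2\lambda\theta(\lambda)}|c(\lambda)|^{-2}\,d\lambda$; the polynomial growth of $|c(\lambda)|^{-2}$ contributes only polynomial factors and is harmless. The hypothesis $\int_1^\infty\theta(t)t^{-1}\,dt=\infty$ is, by the classical correspondence between Ingham's integral condition and the Denjoy--Carleman condition, equivalent to $\sum_m A_m^{-1/2m}=\infty$. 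A Chernoff-type theorem on $X$ — a smooth function whose iterated Laplacians satisfy $\sum_m\|\Delta_X^m f\|_2^{-1/2m}=\infty$ and which vanishes on a nonempty open set must vanish identically — then forces $f\equiv 0$, contradicting nontriviality and settling the first assertion.

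For the second assertion, with $o\in V$ and the decay assumed on $V^c$, the structure is identical once the transfer step is carried out: the bound on $\tilde f(\lambda,\cdot)$ is now extracted from the behaviour of $P_\lambda f$ on the large region $V^c$, where I would use the Harish--Chandra asymptotics $\Phi_\lambda(a_t)\sim c(\lambda)e^{(i\lambda-\rho)t}+c(-\lambda)e^{(-i\lambda-\rho)t}$ to relate the radial decay of the projections to the decay of the spectral data; alternatively one may appeal directly to the Ingham theorem for the Helgason Fourier transform established in \cite{BPR}. The same moment estimate and quasi-analyticity argument then conclude. Throughout, the delicate points will be the uniformity in $b\in K/M$ of the transferred estimate and the control of the $c$-function weight, whereas the passage from decay to the Carleman condition is classical.
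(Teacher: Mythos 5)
You have a genuine gap, and it sits exactly where you flagged it: the transfer step for the first assertion is not merely the ``principal obstacle'' --- it is impossible as stated. You propose to deduce $\bigl(\int_{K/M}|\tilde f(\lambda,b)|^2\,db\bigr)^{1/2}\le C'e^{-\lambda\theta(\lambda)}$ from $\sup_{x\in V}|f\ast\Phi_\lambda(x)|\le Ce^{-\lambda\theta(\lambda)}$ by locally inverting the boundary integral on the fixed open set $V$. But an eigenfunction of $\Delta_X$ with eigenvalue $-(\lambda^2+\rho^2)$ and unit spectral norm can be \emph{exponentially small in} $\lambda$ on any fixed ball: take $\tilde f(\lambda,\cdot)=Y_{\delta,1}$, so that $P_\lambda f(ka_r)=Y_{\delta,1}(kM)\,\Phi_{\lambda,\delta}(a_r)$, where $\Phi_{\lambda,\delta}$ carries the factor $(\sinh r)^p\varphi_\lambda^{(\alpha+p,\beta+q)}(r)$; choosing $\delta$ with $p\sim c\lambda$ (in the Euclidean model this is the whispering-gallery regime $J_m(\lambda r)$ with $m\sim\lambda r_0$, exponentially small in $\lambda$ for $r<(1-\epsilon)r_0$) gives $\sup_V|P_\lambda f|\le e^{-c'\lambda}$ while $\|\tilde f(\lambda,\cdot)\|_{L^2(K/M)}=1$. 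Any local-to-global estimate therefore loses a factor $e^{c\lambda}$, which swamps $e^{-\lambda\theta(\lambda)}$ since $\theta(\lambda)\to0$. The paper's mechanism for this part is entirely different and avoids unique continuation altogether: for each \emph{fixed} $x\in V$ it forms the spherical mean $F_x(h)=\int_K f(xkh)\,dk$, a $K$-biinvariant function of $h$ which vanishes near the identity (because $f$ vanishes near $x$) and whose spherical transform is \emph{exactly} $\lambda\mapsto f\ast\Phi_\lambda(x)$ --- so the pointwise hypothesis on $V$ becomes a transform-decay statement with no loss. The Chernoff theorem in the biinvariant/Jacobi setting then gives $F_x\equiv0$, hence $f\ast\Phi_\lambda(x)=0$ for all $x\in V$ and all $\lambda$; since each $f\ast\Phi_\lambda$ is an eigenfunction of the elliptic operator $\Delta_X$, it is real analytic, so it vanishes identically, and the inversion formula forces $f=0$.

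Two further repairs are needed even for the second assertion, where a transfer \emph{is} available. First, the clean device is not Harish--Chandra asymptotics (which face the same uniformity-in-$\lambda$ problem) but the pairing identity $\int_{X}f\ast\Phi_\lambda(x)\,\bar f(x)\,dx=\int_{K/M}|\tilde f(\lambda,b)|^2\,db$, whose left side is an integral over $V^c$ only since $f=0$ on $V$; together with $f\in L^1$ this yields the decay of the $L^2(K/M)$-\emph{average} of $\tilde f(\lambda,\cdot)$. This averaged decay does not give pointwise-in-$b$ decay, so you cannot simply invoke the Ingham theorem of \cite{BPR} (their Theorem 4.2 assumes $|\tilde f(\lambda,b)|\le Ce^{-|\lambda|\theta(|\lambda|)}$ for every $b$); this is precisely why the paper proves the refined Theorem \ref{sspthm}, whose hypothesis concerns only the Kostant-normalized coefficients $\tilde F_{\delta,j}(\lambda)=Q_\delta(i\lambda+\rho)^{-1}\int_K\tilde f(\lambda,kM)Y_{\delta,j}(kM)\,dk$ with constants allowed to depend on $(\delta,j)$. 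Second, your claimed equivalence between $\int_1^\infty\theta(t)t^{-1}\,dt=\infty$ and the Carleman condition for the moments $A_m$ is false for general decreasing $\theta$: the moments argument works only when, say, $\theta(\lambda)\ge2\lambda^{-1/2}$, and a decreasing $\theta$ with divergent integral can dip below that threshold on long stretches. The paper repairs this by a bootstrap: convolve $f$ with a compactly supported $K$-biinvariant $g$ (produced by Ingham's sufficiency theorem, via the rank one case of \cite[Theorem 4.2]{BPR}) whose spherical transform decays like $e^{-\lambda\Psi(\lambda)}$ with $\Psi(t)=c(1+t)^{-1/2}$, so that $f\ast g$ still vanishes on a smaller ball and enjoys decay with $\theta+\Psi\ge2\lambda^{-1/2}$, and conclude $f\ast g=0$, hence $f=0$ by real analyticity of $\tilde g$. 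Without these three repairs --- spherical means for the first assertion, the pairing identity plus the refined coefficientwise Ingham theorem for the second, and the convolution bootstrap for general $\theta$ --- your outline does not yield the theorem.
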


We show that in order to prove the first part of the above theorem, it is enough to prove a version of Ingham's theorem for Jacobi transform. As in the case compact symmetric spaces (see \cite{GT}) this is achieved by considering the spherical means on the symmetric spaces. For the second part of the theorem, we need a refined version of Ingham's theorem for the Helgason Fourier transform on $ X.$

 As an immediate consequence of the above theorem, the usual version of Ingham's theorem where the decay condition is assumed on the Fourier transform side,  can be obtained. In order to describe that we need some more notations. For $f\in L^1(X)$, the Helgason Fourier transform of $f$ defined on $\mathfrak{a}^{*}\times K/M$, is denoted by $\tilde{f}$. The generalized spectral projections can be represented in terms of $\tilde{f}$ as follows:
	$$	f\ast \Phi_\lambda(x)=\int_{K/M}e^{(-i\lambda+\rho)A(x,b)}\tilde{f}(\lambda,b)db.$$ For the unexplained notations and more details we refer the reader to section 3. Now from the above formula it is easy to see that $|\tilde{f}(\lambda,b)|\leq Ce^{-|\lambda|\theta(|\lambda|)},~\forall \lambda\in \mathfrak{a}^{*}$ implies that $|f\ast\Phi_{\lambda}(x)|\leq Ce^{-\lambda\theta(\lambda)}, ~\lambda>0$ whence we have the following result:
	\begin{cor}
		Let $ \theta $ be a positive decreasing function defined on $ [0,\infty) $ that vanishes  at infinity. Let $ X = G/K $ be a rank one symmetric space of non compact type.  Let $ f \in L^1(X) $ be a nontrivial function vanishing on an open set $ V$ whose Helgason Fourier transform satisfies $$|\tilde{f}(\lambda,b)|\leq Ce^{-|\lambda|\theta(|\lambda|)},~\forall \lambda\in \mathfrak{a}^{*}~\text{and} ~\forall b\in K/M.$$ If $ \int_1^\infty \theta(t) \, t^{-1} dt = \infty$, then $f=0$.  
	\end{cor}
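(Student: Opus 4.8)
The plan is to deduce the corollary directly from Theorem \ref{thm1.1} by contraposition. Suppose, to the contrary, that $f \neq 0$. Since $f$ is a nontrivial $L^1$ function vanishing on the open set $V$, Theorem \ref{thm1.1} asserts that the uniform estimate $\sup_{x \in V}|f \ast \Phi_\lambda(x)| \leq C e^{-\lambda \theta(\lambda)}$ cannot hold for all $\lambda > 0$. Hence it suffices to show that the assumed decay of the Helgason Fourier transform $\tilde f$ does force precisely such a uniform estimate (after, if necessary, shrinking $V$ to a smaller open set on which $f$ still vanishes), which produces the contradiction and forces $f = 0$.

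The key computation rests on the integral representation
$$ f \ast \Phi_\lambda(x) = \int_{K/M} e^{(-i\lambda+\rho)A(x,b)}\, \tilde f(\lambda, b)\, db $$
recorded just before the statement. Taking absolute values and using $|e^{(-i\lambda+\rho)A(x,b)}| = e^{\rho A(x,b)}$ together with the hypothesis $|\tilde f(\lambda,b)| \leq C e^{-|\lambda|\theta(|\lambda|)}$, one obtains for $\lambda > 0$ the bound
$$ |f \ast \Phi_\lambda(x)| \leq C e^{-\lambda\theta(\lambda)} \int_{K/M} e^{\rho A(x,b)}\, db. $$
For each fixed $x$ the remaining integral is finite, since $b \mapsto A(x,b)$ is continuous and $K/M$ is compact, so this already yields the pointwise bound $|f \ast \Phi_\lambda(x)| \leq C_x\, e^{-\lambda\theta(\lambda)}$. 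To upgrade it to the uniform estimate demanded by Theorem \ref{thm1.1}, I would fix a relatively compact open ball $V' \subset V$, which exists because $X$ is a manifold and $V$ is open; clearly $f$ vanishes on $V'$ as well. Over the compact set $\overline{V'} \times K/M$ the continuous function $(x,b) \mapsto e^{\rho A(x,b)}$ is bounded, so $\sup_{x \in V'} \int_{K/M} e^{\rho A(x,b)}\, db =: C' < \infty$, giving $\sup_{x \in V'}|f \ast \Phi_\lambda(x)| \leq C C' e^{-\lambda\theta(\lambda)}$ for all $\lambda > 0$. As $\int_1^\infty \theta(t)\, t^{-1} dt = \infty$ and $f$ is nontrivial vanishing on the open set $V'$, this contradicts Theorem \ref{thm1.1}.

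The only genuine obstacle is the growth of the Poisson-type factor $e^{\rho A(x,b)}$, which obstructs uniformity in $x$ when $V$ is unbounded; passing to a relatively compact sub-ball $V'$ removes this difficulty at no cost, since Theorem \ref{thm1.1} applies verbatim to any open set on which $f$ vanishes. Everything else is a routine application of the integral representation and the compactness of $K/M$.
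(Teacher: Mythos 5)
Your proposal is correct and follows essentially the same route as the paper, which likewise deduces the corollary from the first part of Theorem \ref{thm1.1} via the integral representation $f\ast\Phi_\lambda(x)=\int_{K/M}e^{(-i\lambda+\rho)A(x,b)}\tilde f(\lambda,b)\,db$. Your detour through a relatively compact $V'\subset V$ is sound but unnecessary: $\int_{K/M}e^{\rho A(x,b)}\,db=\Phi_0(x)$, the elementary spherical function at $\lambda=0$, which is uniformly bounded on all of $X$ (as the paper notes, $|\Phi_\lambda(x)|\le\Phi_0(x)\le C$), so the estimate $\sup_{x\in V}|f\ast\Phi_\lambda(x)|\le Ce^{-\lambda\theta(\lambda)}$ holds on the original $V$.
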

	
 We remark that the above result has been discussed in \cite[Theorem 4.2]{BPR} for  noncompact Riemannian symmetric spaces of arbitrary  rank.   \\

We now turn our attention to the case of flat symmetric spaces. Let $ G $ be a non compact semisimple Lie group with finite centre with Lie algebra $ \frak{g}.$  Let $ K $ be a maximal compact subgroup of $ G $ and let $ \frak{g} = \frak{t} \oplus \frak{p} $ be the corresponding Cartan decomposition. The group $ K $ acts on $ \frak{p} $ via the adjoint representation. Forming the semidirect product $ G_0 = K \ltimes \frak{p},$ the flat symmetric space is defined as $ G_0/K $ which can be identified with $ \frak{p}.$ Here the Cartan motion group $G_0$ is of general rank. For more on flat symmetric spaces and its connection with $ G/K $ we refer to Orsted-Ben Said \cite{BO}.
Inside $ \frak{p} $ we choose a maximal abelian subspace $ \frak{a}$.
We denote by $ \Sigma $   the set of restricted roots with multiplicities $ m_\alpha, \alpha \in \Sigma $ and consider it  to be a subset of $ \frak{a}$  by identifying $ \frak{a} $  with $ \frak{a}^\ast $ via the Killing form  $ B.$  We let $ W $ stand for the  the Weyl group, acting on $ \frak{a}.$ It turns out that $ W $ is an example of a finite reflection group associated to a root system $ R $ and the function $ \kappa $ defined by $ \kappa_\alpha = \frac{1}{4} \sum_{ \beta \in \Sigma \cap \R \alpha} m_\beta $ is a multiplicity function. Thus we are in the setting of Dunkl operators introduced and studied by Dunkl \cite{D}. \\

Associated to the root system $ R $ and the multiplicity function $ \kappa $ we have the Dunkl transform $ \mathcal{F}_\kappa $ and the Dunkl Laplacian $ \Delta_\kappa.$  The spherical Fourier transform of $ K $-invariant functions on $ G_0/K $ are given by integrating against spherical functions which are expressible in terms of the  Dunkl exponential $ E_\kappa(ix, \xi).$ The action of the radial part of the Laplace-Beltrami operator $ \Delta$ acting on a $ K$-invariant function $ f $ on $ G_0/K $ is given by $ \Delta_\kappa f_{\frak{a}} $ where $f_{\frak{a}} $ is the restriction of $ f $ to $ \frak{a},$  see Remarks 4.27 (2) in de Jeu \cite{J1}. Thus the following result includes an Ingham's theorem for the generalized spectral projections associated to $ \Delta$ on $ G_0/K.$\\

Let $ \Delta_\kappa $ be the Dunkl Laplacian associated to a general root system $ R$ and a multiplicity function $ \kappa.$ Let $ P_\lambda f = f \ast_\kappa \varphi_{\K,\lambda} , \lambda > 0 $ be the generalized spectral projections associated to $ \Delta_\kappa.$ In the above, $ \varphi_\lambda $ are Bessel functions of certain order and $ \ast_\kappa $ stands for the Dunkl convolution. We refer to Section 4 for more details on all the unexplained concepts and terms related to Dunkl analysis. We prove the following:

\begin{thm}
	\label{thm1.3}
Let $ \theta $ be a positive decreasing function defined on $ [0,\infty) $ that vanishes  at infinity. Assume that $ \int_1^\infty \theta(t) \, t^{-1} dt = \infty.$   Let $ f \in L^1(\R^n, h_\kappa^2 dx) $ be a nontrivial function vanishing on an open set $ V.$  Then 
the estimate  $\displaystyle \sup_{x \in V} |f \ast_\kappa \varphi_{\K, \lambda}(x)| \leq C e^{-\lambda  \theta(\lambda)} $ cannot hold uniformly for all $\lambda$ for the generalized spectral projections associated to the Dunkl Laplacian.  Moreover, if $V$ contains  the origin, then the uniform estimate $\displaystyle \sup_{x \in V^c} |f \ast_\kappa \varphi_{\K,\lambda}(x)| \leq C e^{-\lambda  \theta(\lambda)} $  is also not possible.  
\end{thm}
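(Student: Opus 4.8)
The plan is to mirror the two–step scheme used for Theorem \ref{thm1.1}, replacing the Jacobi transform by the Hankel transform and the Helgason transform by the Dunkl transform. The starting point is the convolution identity $\mathcal{F}_\kappa(f\ast_\kappa\varphi_{\K,\lambda})=\mathcal{F}_\kappa f\cdot\mathcal{F}_\kappa\varphi_{\K,\lambda}$ together with the fact that $\mathcal{F}_\kappa\varphi_{\K,\lambda}$ is a constant multiple of the surface measure of the sphere of radius $\lambda$; writing $\gamma=\sum_\alpha\kappa_\alpha$ this yields
\begin{equation}\label{eq:dunklrep}
 f\ast_\kappa\varphi_{\K,\lambda}(x)=c_\kappa\,\lambda^{2\gamma+n-1}\int_{\sn}\mathcal{F}_\kappa f(\lambda\omega)\,E_\kappa(ix,\lambda\omega)\,h_\kappa^2(\omega)\,d\omega .
\end{equation}
Since $x\mapsto E_\kappa(ix,\xi)$ is entire, each $P_\lambda f$ extends to an entire function of exponential type $\lambda$ on $\C^n$; hence if $P_\lambda f$ vanishes on a nonempty open set then $P_\lambda f\equiv 0$, and $f=0$ by Dunkl inversion. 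The whole problem is therefore to upgrade the assumed decay in $\lambda$ to the vanishing of $P_\lambda f$ on some open set.

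For the first assertion I would test the projections against bumps supported where $f$ vanishes. Fix $\phi\in C_c^\infty$ with $\mathrm{supp}\,\check\phi$ compactly contained in $V$, where $\check\phi(y)=\phi(-y)$, and set $I_\phi(\lambda)=\int_{\R^n}\phi(x)\,P_\lambda f(x)\,h_\kappa^2(x)\,dx$, so that the hypothesis gives $|I_\phi(\lambda)|\le C\,e^{-\lambda\theta(\lambda)}$. Using \eqref{eq:dunklrep} one checks that $I_\phi(\lambda)=c\,P_\lambda g(0)$ with $g=f\ast_\kappa\check\phi$, and since $\varphi_{\K,\lambda}$ is radial and $\tau_0$ is the identity,
\begin{equation}
 P_\lambda g(0)=\int_{\R^n}g(y)\,\varphi_{\K,\lambda}(y)\,h_\kappa^2(y)\,dy=\int_0^\infty m(r)\,\psi_\nu(\lambda r)\,r^{2\gamma+n-1}\,dr,
\end{equation}
the Hankel transform of order $\nu=\gamma+\tfrac n2-1$ of the ordinary spherical mean $m(r)=\int_{\sn}g(r\omega)\,h_\kappa^2(\omega)\,d\omega$ of $g$ about the origin (here $\varphi_{\K,\lambda}(x)=\psi_\nu(\lambda|x|)$). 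Thus $\lambda\mapsto I_\phi(\lambda)$ is the Hankel transform of $m$ and decays like $e^{-\lambda\theta(\lambda)}$. A one–dimensional Ingham theorem for the Hankel transform — which I would prove in the spirit of the Jacobi case — then forces $m\equiv 0$, provided $m$ vanishes for all small $r$; running this for every admissible $\phi$ makes $P_\lambda f$ vanish on a nonempty open set for every $\lambda$, and the entireness noted above closes the argument.

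The step "$m$ vanishes near $r=0$" is where I expect the main difficulty, and it is precisely where the Dunkl setting departs from the symmetric space one. In the latter the spherical means are genuine geodesic averages and vanish for small radii as soon as $f$ vanishes near the centre. Here $g=f\ast_\kappa\check\phi$ is built from the Dunkl convolution, whose translation $\tau_y$ is \emph{not} support preserving; what rescues the argument is that $\tau_y$ propagates supports at unit speed, so that $\mathrm{supp}\,\tau_y u$ lies within distance $|y|$ of $\mathrm{supp}\,u$ (the orbit of the small vector $y$ being contained in $B(0,|y|)$). Because $\check\phi$ is supported in $V$ and $f$ vanishes on $V$, this support/propagation property forces $g$ to vanish on a ball about the origin, whence $m(r)=0$ for small $r$. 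Establishing or invoking this finite–propagation statement for $\tau_y$, via Rösler's positive radial product formula, is the key technical input of the first part.

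For the second assertion the decay is assumed on $V^c$, where $f$ may be nonzero, so the spherical means need not vanish near the origin and the reduction above stalls. Instead I would again form $I_\phi(\lambda)$, now with $\phi\in C_c^\infty(V^c)$, and read \eqref{eq:dunklrep} as the statement that $\int_{\sn}\mathcal{F}_\kappa f(\lambda\omega)\,\mathcal{F}_\kappa\check\phi(\lambda\omega)\,h_\kappa^2(\omega)\,d\omega$ decays like $e^{-\lambda\theta(\lambda)}$ for every such $\phi$. Using that $0\in V$ — so that $f$ and the test functions are supported away from the origin — together with the Paley–Wiener and asymptotic properties of the Dunkl kernel, the plan is to convert this family of estimates into the pointwise bound $|\mathcal{F}_\kappa f(\xi)|\le C\,e^{-|\xi|\theta(|\xi|)}$, the refined step analogous to the Helgason transform estimate used for Theorem \ref{thm1.1}. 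Since $f$ vanishes on the open set $V$, the conclusion $f=0$ then follows from the Dunkl analogue of Ingham's theorem on the transform side (the Dunkl counterpart of the Corollary above). I expect the genuine passage from decay of $P_\lambda f$ on $V^c$ to pointwise decay of $\mathcal{F}_\kappa f$, with uniform control of the constants as $\phi$ localises on the sphere, to be the most delicate point of this half.
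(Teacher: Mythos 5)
Your proposal has genuine gaps in both halves, and in each case the paper's own argument shows what is missing. For the $V^c$ assertion you never find the paper's key trick, and the substitute you sketch does not close. The paper makes no attempt to extract pointwise decay of $\mathcal{F}_{\K}f$: it pairs $f\ast_{\K}\varphi_{\K,\lambda}$ against $\bar{f}$ itself, which is legitimate precisely because $f$ is supported in $V^c$, and the elementary identity $c_{\K}\int_{\R^n} f\ast_{\K}\varphi_{\K,\lambda}(x)\,\bar{f}(x)\,h^2_{\K}(x)dx=\int_{S^{n-1}}|\mathcal{F}_{\K}f(\lambda\omega)|^2h^2_{\K}(\om)d\sigma(\om)$ converts the assumed sup-bound on $V^c$ into $\int_{S^{n-1}}|\mathcal{F}_{\K}f(\lambda\omega)|^2h^2_{\K}(\om)d\sigma(\om)\leq Ce^{-\lambda\theta(\lambda)}$. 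By Cauchy--Schwarz this controls every $h$-spherical-harmonic coefficient of $\mathcal{F}_{\K}f(\lambda\,\cdot)$, which is exactly the hypothesis of the refined Ingham theorem for the Dunkl transform (Theorem \ref{d_I1}); an $L^2$-on-spheres bound suffices, and no pointwise estimate on $\mathcal{F}_{\K}f$ is ever needed. Your plan to upgrade the pairings $\int_{S^{n-1}}\mathcal{F}_{\K}f(\lambda\omega)\mathcal{F}_{\K}\check{\phi}(\lambda\omega)h^2_{\K}(\om)d\sigma(\om)$, $\phi\in C_c^{\infty}(V^c)$, to $|\mathcal{F}_{\K}f(\xi)|\leq Ce^{-|\xi|\theta(|\xi|)}$ is given no argument and is very unlikely to work: the functions $\mathcal{F}_{\K}\check{\phi}(\lambda\,\cdot)$ are restrictions of entire functions, so you cannot localize in $\omega$, and any approximate localization costs constants growing in $\lambda$ that eat precisely the decay you are trying to detect. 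You flagged this step as ``the most delicate point''; it is in fact a missing idea, not a technicality.

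In the first half, the step you rightly identify as the crux --- that $\mathrm{supp}\,\tau_y u$ lies within Euclidean distance $|y|$ of $\mathrm{supp}\,u$ --- is not available and is false for the non-radial bumps $\phi$ you need. R\"osler's positive product formula and the known support theorems for the Dunkl translation (cf.\ \cite{TY}, and Amri--Anker--Sifi) concern \emph{radial} functions only, and even there the support spreads along $G$-orbits: already for $\Z_2$ acting on $\R$, translating by a small $y$ a bump supported near $x_0>0$ produces mass near $-x_0$ as well. Hence $g=f\ast_{\K}\check{\phi}$ sees values of $f$ near the reflected copies of $\mathrm{supp}\,\phi$, which need not lie in $V$, so your spherical mean $m$ need not vanish near $r=0$ and the Hankel--Ingham step has nothing to bite on. The paper avoids translating non-radial functions altogether: for each fixed $x\in V$ it forms $F_x(r)=f\ast_{\K}\mu_r(x)$ with $\mu_r$ radial, whose Hankel transform in $r$ is exactly $f\ast_{\K}\varphi_{\K,\lambda}(x)$ --- the quantity assumed to decay --- so Chernoff's theorem for the Bessel operator (Theorem \ref{chernoffH}) applies directly, giving $f\ast_{\K}\varphi_{\K,\lambda}(x)=0$ on $V$ and then $f=0$ by real-analyticity; the vanishing of $F_x$ near $r=0$ also rests on radial support theory, but only for $\mu_r$, whereas your route requires the strictly stronger non-radial statement. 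Two smaller points: the Carleman condition needs $\theta(\lambda)\gtrsim\lambda^{-1/2}$, which the paper arranges by convolving with a radial compactly supported function having Ingham-type Dunkl transform decay (Proposition \ref{h_e}) --- your ``Ingham theorem for the Hankel transform'' must include this bootstrap --- and your representation of $f\ast_{\K}\varphi_{\K,\lambda}$ carries a spurious factor $\lambda^{2\gamma+n-1}$ relative to the paper's normalization \eqref{proj_d}.
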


As in the case of the symmetric space $ G/K $ the above theorem will be proved by making use of an Ingham's  theorem for the Hankel transform. This reduction is facilitated by considering the spherical means $ f \ast_\kappa \mu_r(x) $ on $ \R^n$ where $ \mu_r $ is the normalised surface measure on the sphere $ |x| = r.$ As in the case of standard spherical means, which corresponds to $ \kappa = 0$ these are Dunkl multipliers given by the Bessel functions $ \varphi_\lambda(r)$ which explains the connection with the Hankel transform, see Section 4.

Also similar to the symmetric space case, we have a version of Ingham's theorem for Dunkl transform which is an easy consequence of the above theorem.
\begin{cor} Let $ \theta $ be a positive decreasing function defined on $ [0,\infty) $ that vanishes  at infinity. Let $f\in L^1(\mathbb{R}^n, h^2_{\kappa}(x)dx)$ be such that its Dunkl transform $\mathcal{F}_{\kappa}f$ satisfies 
	$$|\mathcal{F}_{\kappa}f(\xi)|\leq Ce^{-|\xi|\theta(|\xi|)},~\forall \xi\in \mathbb{R}^n.$$
	If $f$ vanishes on a nonempty open set and 
	$ \int_1^\infty \theta(t) \, t^{-1} dt = \infty$, then $f=0.$
\end{cor}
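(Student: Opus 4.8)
The plan is to deduce this corollary from Theorem \ref{thm1.3} by showing that the pointwise decay of the Dunkl transform forces the same decay on the generalized spectral projections $f \ast_\kappa \varphi_{\kappa,\lambda}$, whence Theorem \ref{thm1.3} applies. Recall that $\varphi_{\kappa,\lambda}$ is the $W$-invariant Bessel function whose Dunkl transform is a constant multiple of the surface measure carried by the sphere $\{|\xi| = \lambda\}$ with respect to the weight $h_\kappa^2$. Consequently, computing the Dunkl convolution on the transform side and inverting, one obtains an integral representation of the form
\begin{equation*}
f \ast_\kappa \varphi_{\kappa,\lambda}(x) = \frac{1}{d_\kappa}\int_{\mathbf{S}^{n-1}} E_\kappa(ix,\lambda\omega)\,\mathcal{F}_\kappa f(\lambda\omega)\, h_\kappa^2(\omega)\, d\omega,
\end{equation*}
where $d_\kappa = \int_{\mathbf{S}^{n-1}} h_\kappa^2(\omega)\, d\omega$, so that the spectral projection at frequency $\lambda$ is an average of $\mathcal{F}_\kappa f$ over the sphere of radius $\lambda$ against the probability measure $d_\kappa^{-1} h_\kappa^2(\omega)\, d\omega$. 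This is the exact Dunkl analogue of the Helgason Fourier transform identity recorded just before the first corollary.

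The first step is to establish this representation; here I would invoke the multiplicative property $\mathcal{F}_\kappa(f \ast_\kappa \varphi_{\kappa,\lambda}) = \mathcal{F}_\kappa f \cdot \mathcal{F}_\kappa \varphi_{\kappa,\lambda}$ together with the identification of $\mathcal{F}_\kappa \varphi_{\kappa,\lambda}$ with the normalised surface measure on $\{|\xi|=\lambda\}$, and then apply Dunkl inversion. The second step is the estimate itself. Since the Dunkl kernel satisfies $|E_\kappa(ix,\xi)| \leq 1$ for all real $x,\xi$, the representation and the hypothesis $|\mathcal{F}_\kappa f(\xi)| \leq C e^{-|\xi|\theta(|\xi|)}$ yield, for every $x \in \mathbb{R}^n$,
\begin{equation*}
\bigl| f \ast_\kappa \varphi_{\kappa,\lambda}(x) \bigr| \leq \frac{1}{d_\kappa}\int_{\mathbf{S}^{n-1}} \bigl| \mathcal{F}_\kappa f(\lambda\omega) \bigr|\, h_\kappa^2(\omega)\, d\omega \leq C\, e^{-\lambda\theta(\lambda)},
\end{equation*}
the last inequality holding because $|\lambda\omega| = \lambda$ on the sphere and $d_\kappa^{-1} h_\kappa^2(\omega)\, d\omega$ is a probability measure. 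In particular the bound holds uniformly on the nonempty open set $V$ on which $f$ vanishes.

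Finally I would invoke the first assertion of Theorem \ref{thm1.3} in contrapositive form: a nontrivial $f \in L^1(\mathbb{R}^n, h_\kappa^2 dx)$ vanishing on $V$ cannot satisfy $\sup_{x \in V}|f \ast_\kappa \varphi_{\kappa,\lambda}(x)| \leq C e^{-\lambda\theta(\lambda)}$ uniformly in $\lambda$ when $\int_1^\infty \theta(t)\,t^{-1}\,dt = \infty$. Since we have just produced exactly such a uniform bound, $f$ must vanish identically. The crux of the argument is therefore the first step, namely the precise integral representation of $f\ast_\kappa\varphi_{\kappa,\lambda}$ in terms of the restriction of $\mathcal{F}_\kappa f$ to a sphere; once it is in hand, the decay estimate and the appeal to Theorem \ref{thm1.3} are immediate. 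The only point requiring minor care is that a different normalisation of $\varphi_{\kappa,\lambda}$ could produce a polynomial factor $\lambda^{N}$ in front of $e^{-\lambda\theta(\lambda)}$; this is harmless, since writing $\lambda^{N} = e^{N\log\lambda}$ and passing to the weight $\theta(\lambda) - N\lambda^{-1}\log\lambda$ preserves both the vanishing at infinity and the divergence of $\int_1^\infty \theta(t)\, t^{-1}\,dt$, because $\int_1^\infty \lambda^{-2}\log\lambda\, d\lambda < \infty$.
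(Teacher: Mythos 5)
Your proposal is correct and follows essentially the same route as the paper: the integral representation you establish is precisely the identity (\ref{proj_d}) recorded in Section 4.3, from which boundedness of the Dunkl kernel $E(ix,\lambda\omega)$ and the hypothesis give the uniform bound $|f\ast_\kappa\varphi_{\kappa,\lambda}(x)|\leq Ce^{-\lambda\theta(\lambda)}$, and the conclusion is then the first assertion of Theorem \ref{thm1.3} applied in contrapositive, exactly as the paper intends by calling the corollary ``an easy consequence of the above theorem.'' Your closing caveat about a polynomial factor $\lambda^{N}$ is unnecessary under the paper's normalisation of $\varphi_{\kappa,\lambda}$, but it is a harmless and correct observation.
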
 
Using a different approach, recently this result has been proved in \cite{BPP}.  Moreover, we remark that putting $\kappa=0$ in the above result we can get a version of Ingham's theorem for the Fourier transform on $\mathbb{R}^n$ which was proved in \cite{BSR} using a several variable version of the classical Denjoy-Carleman theorem for the quasi-analytic functions. Consequently, our result in the paper provides a new and simple proof of Ingham type uncertainty principle for the Fourier transform on $\mathbb{R}^n$.  

We conclude this introduction by briefly describing the organisation of the paper. In Section 2, we prove a version of Chernoff's theorem for Bessel and Jacobi operators. Using this, in Section 3, we prove a refined version of Ingham's theorem for the Helgason Fourier transform on rank one Riemannian symmetric spaces of noncompact type.  Making use of spherical means, we  then prove an  Ingham type uncertainty principle for the generalized spectral projections associated to the Laplace-Beltrami operator. Finally, in Section 4, we prove similar Ingham type results for the generalized spectral projections associated to Dunkl Laplacian.

\section{Chernoff's theorem for Bessel  and Jacobi operators}
In 1975, P. R. Chernoff proved an $L^2$ version of classical Denjoy-Carleman theorem which deals with quasi analytic functions on $\R^n.$ In his paper \cite{C}, he used iterates of the Laplacian to prove a sufficient condition for a smooth function to be quasi analytic (See \cite[Theorem 6.1]{C}). Because of its usefulness in proving Ingham type uncertainty theorems, study of this in different settings has received considerable attention in recent years. See the works \cite{BPP}, \cite{BGST}, \cite{GT} in this regard. Also it is worth pointing out that the full power of Chernoff's theorem is not required to prove Ingham type results. As can be seen from the above works, only a weaker version is sufficient for this purpose.

Our aim in this section is to prove a weaker version Chernoff's theorem for a   differential operator  $L$ under certain assumptions on its eigenfunction expansion. Later in this section we will see that typical examples of $L$ includes Bessel and Jacobi operators.   Suppose $w$ and $\tilde{w}$ be  positive functions on $\R^+$   and $ \R$ respectively. We assume that $ \tilde{w} $ is an even function.   Let $T:L^2(\R^+,w(r)dr)\rightarrow L^2(\R, \tilde{w}(\lambda)d\lambda)$ be a transformation defined by $$Tf(\lambda):=\int_{0}^{\infty}f(r)\psi_{\lambda}(r)w(r)dr, $$ where   $\psi_{\lambda}$'s are bounded eigenfunctions (not necessarily in $ L^2$) of the operator $L$ with eigenvalues $ (\lambda^2 +\delta^2) $  for some $ \delta >0,$ i.e., 
	$ L\psi_{\lambda}= (\lambda^2+\delta^2) \psi_{\lambda}.$ We also assume that $\psi_{\lambda}$ are normalised so that $\psi_{\lambda}(0)=1$ and  $ \psi_\lambda(r) = \psi_{-\lambda}(r).$ As a matter of fact, $Tf(\lambda)$ is an even function of $\lambda$.   We further assume that there are following versions of inversion and Plancherel formulas for this transform. Let  the inversion formula which is assumed to be held for a suitable dense class of functions,  read as  
	$$f(r)= c \int_{-\infty}^{\infty}Tf(\lambda)\psi_{\lambda}(r) \tilde{w}(\lambda)d\lambda$$ for some constant $ c >0.$ The  Plancherel formula for $f\in L^2(\R^+,w(r)dr)$ is assumed to be $$\|f\|^2_{L^2(\R^+,w(r)dr) }= c \int_{-\infty}^{\infty}|Tf(\lambda)|^2 \tilde{w}(\lambda)d\lambda .$$  Under all the assumptions  described above,  on the eigenfunction expansion of $L$, we prove the following version of Chernoff's theorem for $L$.
   
\begin{thm}
	\label{chernoff} Let $ f \in L^2(\R^+, w(r)dr) $ be such that $ L^mf \in L^2(\R^+, w(r)dr) $ for all $ m \in \Na $  and satisfies the Carleman condition 
	$ \sum_{m=1}^\infty  \| L^m f \|_2^{-1/(2m)} = \infty.$  Assume that $\tilde{w}$ has at most polynomial growth. Then $ f $ cannot vanish in a neighbourhood of $0 $ unless it is identically zero.
\end{thm}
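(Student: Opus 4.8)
My plan is to deduce the statement from the classical one--dimensional Denjoy--Carleman theorem applied to the Euclidean Fourier transform of the spectral data of $f$. The first step is to record the intertwining identity $T(L^mf)(\lambda)=(\lambda^2+\delta^2)^m\,Tf(\lambda)$; this follows by transferring $L^m$ onto the eigenfunction $\psi_\lambda$ using that $L$ is (formally) self--adjoint on $L^2(\R^+,w(r)dr)$ and that $L\psi_\lambda=(\lambda^2+\delta^2)\psi_\lambda$, the vanishing of the boundary contributions being part of the standing hypotheses on the dense class. Substituting into the Plancherel formula then yields, for every $m\in\Na$,
\[
\int_{-\infty}^\infty(\lambda^2+\delta^2)^{2m}\,|Tf(\lambda)|^2\,\tilde w(\lambda)\,d\lambda=c^{-1}\,\|L^mf\|_2^2 ,
\]
so that every weighted moment of $Tf$ is finite and quantitatively dominated by $\|L^mf\|_2$.

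Next I introduce the auxiliary function
\[
F(\xi)=\int_{-\infty}^\infty Tf(\lambda)\,e^{i\lambda\xi}\,\tilde w(\lambda)\,d\lambda,\qquad \xi\in\R,
\]
which is well defined and even since both $Tf$ and $\tilde w$ are even. The heart of the argument is the uniform derivative estimate $\|F^{(2m)}\|_\infty\le\int_{-\infty}^\infty|Tf(\lambda)|\,|\lambda|^{2m}\,\tilde w(\lambda)\,d\lambda\le C\,\|L^{m+s_0}f\|_2$, where $s_0$ is a fixed integer. To obtain this I write $|\lambda|^{2m}\le(\lambda^2+\delta^2)^m$ and apply Cauchy--Schwarz, splitting off the factor $(\lambda^2+\delta^2)^{m+s_0}$; the surviving integral is $\int(\lambda^2+\delta^2)^{-2s_0}\tilde w\,d\lambda$, which is finite and independent of $m$ precisely because $\tilde w$ has at most polynomial growth and $s_0$ is chosen large enough (using $(\lambda^2/(\lambda^2+\delta^2))^{m}\le 1$). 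This is exactly where the polynomial--growth hypothesis enters, and the $m$--independence of the constant is essential.

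It remains to see that $F$ vanishes to infinite order at the origin and lies in a quasi--analytic class. For the former, the odd derivatives $F^{(2k+1)}(0)$ vanish by evenness, while for the even ones I use that $f\equiv0$ near $0$ forces $L^jf(0)=0$ for all $j$; evaluating the inversion formula for $L^jf$ at $r=0$ and using $\psi_\lambda(0)=1$ gives $\int_{-\infty}^\infty(\lambda^2+\delta^2)^j\,Tf(\lambda)\,\tilde w(\lambda)\,d\lambda=c^{-1}L^jf(0)=0$, whence $\int_{-\infty}^\infty\lambda^{2k}\,Tf(\lambda)\,\tilde w(\lambda)\,d\lambda=0$ for every $k$ by a triangular induction, i.e.\ $F^{(2k)}(0)=0$. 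For quasi--analyticity, the derivative estimate yields $\sum_m\|F^{(2m)}\|_\infty^{-1/(2m)}\ge c'\sum_m\|L^{m+s_0}f\|_2^{-1/(2m)}$, and a short truncation argument shows that the shift $m\mapsto m+s_0$ together with the slight change of exponent cannot destroy the divergence coming from the Carleman hypothesis $\sum_m\|L^mf\|_2^{-1/(2m)}=\infty$. Thus $F$ belongs to a quasi--analytic class and vanishes with all its derivatives at $0$, so the Denjoy--Carleman theorem forces $F\equiv0$. Since $F$ is the Fourier transform of the integrable even function $Tf\cdot\tilde w$ and $\tilde w>0$, this gives $Tf\equiv0$, and the Plancherel formula then yields $f\equiv0$.

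The step I expect to be the main obstacle is the uniform derivative estimate together with the verification that the Carleman condition survives the passage from $\|L^mf\|_2$ to $\|F^{(2m)}\|_\infty$: one must choose $s_0$ so that the residual integral converges while keeping the constant independent of $m$, and then control the resulting shift in both index and exponent of the Carleman series (via a truncation that discards the already negligible small terms). Everything else---the intertwining identity, the infinite--order vanishing at $0$, and the final Fourier uniqueness---is comparatively routine once the interchange of differentiation and integration is justified by the moment bounds above.
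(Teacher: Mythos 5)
Your proof is correct in substance, but it follows a genuinely different route from the paper's. The paper never constructs an auxiliary Fourier transform and never invokes the classical Denjoy--Carleman theorem: it forms the measure $d\mu_f(\lambda)=|Tf(\lambda)|\,\tilde w(\lambda)\,d\lambda$, bounds its even moments by $M(2m)\le C_j\|L^{m+j}f\|_2$ via the same Cauchy--Schwarz splitting you use (this is also exactly where the polynomial growth of $\tilde w$ enters for the paper, through the finiteness of $C_j^2=\int(\lambda^2+\delta^2)^{-2j}\tilde w(\lambda)\,d\lambda$), and then applies de Jeu's moment-problem theorem \cite{J}: under the Carleman condition on the moments, even polynomials are dense in $L^1_e(\R,d\mu_f)$. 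It approximates $\overline{Tf}$ by an even polynomial $q$, writes $|Tf|^2=(\overline{Tf}-q)Tf+qTf$, and kills the polynomial term by setting $q(\lambda)=p(\lambda^2+\delta^2)$ and letting $r\to 0$ in the inversion formula, using $\psi_\lambda(0)=1$ and the vanishing of $L^mf$ near $0$; this gives $\|f\|_2^2<\varepsilon$ directly, with no quasi-analytic function in sight. Your argument replaces de Jeu's density theorem by the classical one-dimensional Denjoy--Carleman theorem applied to $F=\widehat{Tf\cdot\tilde w}$, and the two proofs share all their raw ingredients (the intertwining identity, the Cauchy--Schwarz moment bound, the $r\to0$ evaluation with $\psi_\lambda(0)=1$, and the index-shift step that the paper delegates to Lemma 3.3 of \cite{BPR}). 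One genuine, though easily repaired, omission in your write-up: a Denjoy--Carleman class requires bounds on \emph{all} derivatives of $F$, whereas you only estimate the even-order ones; the odd-order bounds follow from $|\lambda|^{2m+1}\le\delta^{-2}(\lambda^2+\delta^2)^{m+1}$ or from the interpolation $M_{2m+1}\le(M_{2m}M_{2m+2})^{1/2}$ given by Cauchy--Schwarz, and divergence of the full series $\sum_n M_n^{-1/n}$ is then automatic since its even-index subsum is the series you already showed diverges. As for what each approach buys: the paper's route is shorter, stays entirely inside $L^1(d\mu_f)$, and needs only finitely many moments at a time; yours is self-contained modulo classical quasi-analyticity and Fourier uniqueness, avoids de Jeu's theorem altogether, and is essentially the mechanism of \cite{BSR} specialized to one dimension --- precisely the strategy the paper advertises its de Jeu-based argument as simplifying.
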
 
We remark that this theorem is actually a continuous version of Theorem 2.2 in \cite{GT} .  In order to prove this theorem, we need the following result due to de Jeu \cite{J}.
 \begin{thm}\label{carleman} Let $ \mu $ be a finite positive Borel measure on $ \R $ for which all the moments $ M(m) =\int_{-\infty}^\infty t^m d\mu $ are finite. If we further assume that the moments satisfy the Carleman condition $ \sum_{m=1}^\infty  M(2m)^{-1/2m} = \infty,$ then polynomials are dense in $ L^p(\R,d\mu), 1 \leq p < \infty.$
\end{thm}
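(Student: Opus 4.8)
The plan is to prove Theorem~\ref{carleman} by duality: reduce the density of polynomials to the triviality of their annihilator, then annihilate it via a quasi-analyticity (Denjoy--Carleman) argument applied to a characteristic function. Fix $1\le p<\infty$ with conjugate exponent $q$. By the Hahn--Banach theorem it suffices to show that any $g$ in the dual space $L^q(\R,d\mu)$ (with $q=\infty$ when $p=1$) satisfying $\int_\R t^m g\,d\mu=0$ for all $m\ge 0$ must vanish $\mu$-a.e. Since $\mu$ is finite, H\"older's inequality gives $L^q(d\mu)\subset L^1(d\mu)$, so $d\nu:=g\,d\mu$ is a well-defined finite complex measure all of whose moments vanish.

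First I would control the absolute moments $\beta_m:=\int_\R|t|^m\,d|\nu|$ by those of $\mu$: H\"older yields $\beta_m\le\|g\|_{L^q(d\mu)}\bigl(\int_\R|t|^{mp}\,d\mu\bigr)^{1/p}$, which is finite because all moments of $\mu$ are. Then the characteristic function $\phi(\xi)=\int_\R e^{i\xi t}\,d\nu(t)$ lies in $C^\infty(\R)$, with $\|\phi^{(m)}\|_\infty\le\beta_m$ and $\phi^{(m)}(0)=i^m\int_\R t^m\,d\nu=0$ for every $m$ by the annihilation hypothesis. Thus $\phi$ is a smooth function all of whose derivatives vanish at the origin, with derivative growth governed by $\{\beta_m\}$.

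The heart of the argument is to verify the quasi-analyticity condition $\sum_m\beta_m^{-1/m}=\infty$ for the class $C\{\beta_m\}$. When $p=2$ this is immediate: there $\beta_m\le\|g\|_2\,M(2m)^{1/2}$, so $\beta_m^{-1/m}\ge\|g\|_2^{-1/m}M(2m)^{-1/2m}$ and the hypothesis $\sum_m M(2m)^{-1/2m}=\infty$ transfers directly. For general $p$ I would first normalise $\mu$ to a probability measure (changing neither the Carleman hypothesis nor the density conclusion); then $\int_\R|t|^{mp}\,d\mu\le 1+M(2k_m)$ with $k_m=\lceil mp/2\rceil$, giving $\beta_m^{-1/m}\gtrsim M(2k_m)^{-1/(2k_m)}$ up to a factor tending to $1$ --- the Carleman divergence forces the non-increasing sequence $M(2k)^{-1/2k}$ to decay subexponentially, which kills the correction coming from $2k_m\ne mp$. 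Because $M(2k)^{1/2k}=\|t\|_{L^{2k}(d\mu)}$ is non-decreasing for a probability measure, $M(2k)^{-1/2k}$ is non-increasing, and a standard block comparison transfers $\sum_k M(2k)^{-1/2k}=\infty$ to the arithmetic-density subsequence $\{k_m\}$, yielding $\sum_m\beta_m^{-1/m}=\infty$.

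With quasi-analyticity established, the Denjoy--Carleman theorem forces $\phi\equiv 0$, and injectivity of the Fourier--Stieltjes transform on finite measures gives $\nu=0$, i.e.\ $g=0$ $\mu$-a.e.; the duality argument then yields density of polynomials in $L^p$. (One should also pass $\{\beta_m\}$ to its log-convex regularisation before invoking Denjoy--Carleman, which leaves the Carleman sum unchanged.) I expect the main obstacle to be exactly this transfer of the moment condition through the conjugate exponent $p$: converting the hypothesis on the even moments $M(2m)$ of $\mu$ into quasi-analyticity of the absolute moments $\beta_m$ of $\nu$, where the extra power $p$, the monotonicity of moments, and the loss of log-convexity must all be handled; the case $p=2$ displays the underlying mechanism cleanly.
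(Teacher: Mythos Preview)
The paper does not prove Theorem~\ref{carleman}: it is stated as a result of de~Jeu~\cite{J} and used as a black box in the proof of Theorem~\ref{chernoff}. Your proposal therefore supplies an argument where the paper merely cites one.

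The strategy you outline --- Hahn--Banach duality, then Denjoy--Carleman quasi-analyticity of the Fourier--Stieltjes transform of the annihilating measure --- is the classical route and is correct. The one step you flag as delicate, transferring the Carleman condition through the exponent $p$, can be handled cleanly via Lyapunov's inequality rather than the ad~hoc bound $1+M(2k_m)$: for a normalised probability $\mu$ the map $r\mapsto A_r^{1/r}$ with $A_r=\int|t|^r\,d\mu$ is non-decreasing, so $A_{mp}^{1/(mp)}\le M(2k_m)^{1/(2k_m)}$ with $k_m=\lceil mp/2\rceil$, which gives $\beta_m^{-1/m}\ge C^{-1/m}M(2k_m)^{-1/(2k_m)}$ directly. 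Since the gaps $k_{m+1}-k_m$ are bounded and $M(2k)^{1/(2k)}$ is non-decreasing, the block comparison $\sum_k M(2k)^{-1/(2k)}\le C'\sum_m M(2k_m)^{-1/(2k_m)}$ transfers the divergence; no separate ``subexponential decay'' observation is needed. With that simplification your sketch has no gap, only routine details (the log-convex regularisation you already mention) to fill in.
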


 As remarked in   \cite{BSR} (see Remark 3.6), if the measure $\mu$ is even, then even polynomials are dense in $ L^p_e(\R, d\mu)$, the subspace of even functions in $ L^p(\R,d\mu).$ We will make use of this observation in the following proof. We remark that the proof given below is already present in \cite{BSR} but for the sake of convenience of the reader we reproduce it here.

\textbf{\textit{Proof of Theorem \ref{chernoff}:}}  Let $f$ be as in the statement of the theorem.   We consider the following measure $\mu_f$ defined on the Borel subsets of $\R$ by $$ \mu_f(E) =\int_E   |Tf(\lambda)| \tilde{w}(\lambda)d\lambda .$$ Hence it follows that 
 $$ \int_{-\infty}^\infty  t^{2m} d\mu_f(t) = \int_{-\infty}^{\infty}  \lambda^{2m}|Tf(\lambda)|\tilde{w}(\lambda)d\lambda \leq \int_{-\infty}^{\infty}  (\lambda^2+\delta^2)^{m}|Tf(\lambda)|\tilde{w}(\lambda)d\lambda. $$  But by Cauchy-Schwarz inequality,  we have 
 $$\int_{-\infty}^\infty  t^{2m} d\mu_f(t) \leq C_j \left(\int_{0}^{\infty}(\lambda^2+\delta^2)^{2(m+j)}|Tf(\lambda)|^2 \tilde{w}(\lambda)d\lambda\right)^{\frac{1}{2}} =C_j \|L^{(m+j)}f\|_2$$
 where $C_j^2= \int_{0}^{\infty}(\lambda^2+\delta^2)^{-2j}\tilde{w}(\lambda)d\lambda.$ Note that $C_j$ is finite for large enough $j $ in view of  our assumption that $\tilde{w}(\lambda)$ has polynomial growth in $\lambda.$ Now if we denote the $m^{th}$ order moment of the measure $\mu_f$ by $M(m)$, then from the above observations it follows that  
$$ \sum_{m=1}^\infty  M(2m)^{-1/2m} \geq  \sum_{m=1}^\infty C_j^{-1/2m} \| L^{(m+j)}f \|_2^{-1/2m} =\sum_{m=1}^\infty C_j^{-1/2m}\left(\| L^{2(m+j)}f \|_2^{-1/2(m+j)}\right)^{\frac{m+j}{m}}. $$  
 Hence from the hypothesis  $\sum_{m=1}\|L^{m}f\|_2^{-\frac{1}{2m}}=\infty,$  combined with Lemma 3.3 in \cite{BPR} it follows that $\sum_{m=1}^\infty  M(2m)^{-1/2m}=\infty.$ So, by Theorem \ref{carleman}, and the remark following it, even polynomials are dense in $L^1_e(\R,d\mu_f)$.   Since $f\in  L^2(\R,w(r)dr)$, by Plancherel $Tf\in L^2_e(\R,\tilde{w}(\lambda)d\lambda).$ Clearly the function $\varphi(\lambda)=\overline{Tf(\lambda)}$ is even and      $$ \int_{-\infty}^\infty | \varphi(\lambda)| d\mu_f(\lambda) = \int_0^\infty  |Tf(\lambda)|^2 \tilde{w}(\lambda) d\lambda <\infty, $$ proving that $\varphi\in L^1_e(\R,d\mu_f)$.  Now given $\varepsilon>0$, there exists an even  polynomial   $q$ such that 
 \begin{equation}
 \label{c1}
 \int_{0}^{\infty} |  \overline{Tf  (\lambda) } -q(\lambda)||Tf(\lambda)|\tilde{w}(\lambda)d\lambda<\varepsilon.
 \end{equation}  
But notice  that $|Tf(\lambda)|^2=(\overline{Tf(\lambda)}-q(\lambda))Tf(\lambda)+q(\lambda)Tf(\lambda).$ So  from the Plancherel formula, we see that 
$$\|f\|_2^2\leq\int_{0}^{\infty} |\overline{Tf(\lambda)}-q(\lambda)||Tf(\lambda)|\tilde{w}(\lambda)d\lambda+\int_{0}^{\infty}q(\lambda)Tf(\lambda)\tilde{w}(\lambda)d\lambda. $$ 
Now under the assumption that $f$ vanishes on $(0,\eta)$, for some $\eta>0$, it follows that, for $0<r<\eta$ and for any $m\in\mathbb{N}$ we have 
\begin{equation}
\label{c2}
L^mf(r)=c\int_{0}^{\infty}  (\lambda^2+\delta^2)^mTf(\lambda)\psi_{\lambda}(r)\tilde{w}(\lambda)d\lambda=0.
\end{equation}
  Since $ q $ is an even polynomial, we can find another polynomial $ p $ such that $ p(\lambda^2+\delta^2) = q(\lambda).$ Therefore, \ref{c2}  along with the fact that  $\psi_{\lambda}(0)=1$   gives    
$$  \int_{ 0}^\infty q(\lambda) \, Tf(\lambda)\tilde{w}(\lambda)d\lambda = \lim_{r \rightarrow 0} \int_{ 0}^\infty p ( \lambda^2+\delta^2 ) \, Tf(\lambda) \psi_{\lambda}(r)\tilde{w}(\lambda) d\lambda= 0 $$ which together with \ref{c1} proves that
$  \|f\|_2^2  < \varepsilon .$
As this is true for every  $ \varepsilon>0 , $ it follows that $ f=0 $ completing the proof of the theorem.   \qed

 \begin{rem} A close examination of the above proof shows that Theorem 1.2 is valid under the weaker assumption that $ \displaystyle\lim_{r \rightarrow 0} L^mf(r) = 0 $ for all $ m.$
 \end{rem}
 
In the following subsections, we discuss two very important examples of the operator $L$.  We will be using them later in this article.
\subsection{Bessel differential operator-Hankel transform}
Let $\alpha>-\frac{1}{2}$. Suppose $\Delta_{\alpha}$ stands for the operator 
$$\Delta_{\alpha}f(r)=r^{-(2\alpha+1)}\frac{d}{dr}\left(r^{2\alpha+1}\frac{d}{dr}f(r)\right).$$   Let  $ J_\alpha(t)  $ stand for  the Bessel function of order $ \alpha $ defined by
$$J_{\alpha}(t)=\sum_{k=0}^{\infty}\frac{(-1)^k2^{-\alpha-2k}t^{\alpha+2k}}{\Gamma(k+1)\Gamma(k+\alpha+1) }.$$  It is well known  that for a suitable choice of $ c_\alpha $ the  functions  $\psi^{\alpha}_{\lambda}(r) =  c_\alpha J_{\alpha}(\lambda r)(\lambda r)^{-\alpha} $ are   solutions of the equation    
\begin{align*}
(\Delta_{\alpha}+\lambda^2)f(r)=0,~ f(0)=1.
\end{align*}   
It is convenient to subtract a positive constant from $\Delta_{\alpha}$ and work with that. We define $ \Delta_{\alpha,a} :=\Delta_{\alpha}-a^2$ where $a \neq 0.$ Clearly $\psi^{\alpha}_{\lambda}$ are eigenfunctions of $\Delta_{\alpha,a} $ with eigenvalue $-(\lambda^2+a^2)$  and they are even as  a function of $ \lambda.$  Also one can easily check that $\Delta_{\alpha,a} $ is self-adjoint on $L^2(\R^+, r^{2\alpha+1}\, dr)$.
The Hankel transform of order $\alpha$ for a suitable function $f$ is defined as 
$$\mathcal{H}^{(\alpha)}f(\lambda)=\int_{ 0}^{\infty}f(r)\psi_{\lambda}^{\alpha}(r)r^{2\alpha+1}dr. $$ It is easy to see from the definition of Bessel function that $\psi_{-\lambda}^{\alpha}(r)=\psi_{\lambda}^{\alpha}(r)$ and hence $\mathcal{H}^{(\alpha)}f(\lambda)$ is an even function of $\lambda$. We remark that Dunkl transform of a radial function on $\R^n$ is given by Hankel transform of order $\alpha$ for some $\alpha$. For more details see Section 4.  
Now using the self-adjointness of the operator under consideration, we have 
$$\mathcal{H}^{(\alpha)}(\Delta_{\alpha,a}f)(\lambda)=-(\lambda^2+a^2)\mathcal{H}^{(\alpha)}f(\lambda).$$ 
The inversion and Plancherel formula for the Hankel transform are described in the following theorem. 
\begin{thm} Assume that  $\alpha>-\frac{1}{2}$. We have the following inversion and Plancherel theorem for the Hankel transform.
	\begin{enumerate}
		\item (Inversion) If $f\in \mathcal{S}(\R^+)$, then 
		$$f(r)=\int_{ 0}^{\infty}\mathcal{H}^{(\alpha)}f(\lambda)\psi_{\lambda}^{\alpha}(r)\lambda^{2\alpha+1}d\lambda.$$
		\item (Plancherel) The map $f\rightarrow \mathcal{H}^{(\alpha)}f$ extends as an isometry  of $L^2(\R^+,r^{2\alpha+1}dr)$ onto $L^2_e(\R, |\lambda|^{2\alpha+1}d\lambda)$. 
	\end{enumerate}
\end{thm}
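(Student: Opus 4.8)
The plan is to transfer the statement to the classical self-reciprocal Hankel transform and then quote the Hankel--Parseval/Titchmarsh inversion theorem. Recall that for $\alpha > -1/2$ the map $h_\alpha u(\lambda) = \int_0^\infty u(r)\sqrt{\lambda r}\, J_\alpha(\lambda r)\, dr$ is a unitary involution of $L^2((0,\infty), dr)$: it obeys the Parseval relation $\|h_\alpha u\|_2 = \|u\|_2$, the inversion $h_\alpha(h_\alpha u) = u$, and hence maps $L^2((0,\infty),dr)$ bijectively onto itself. This is exactly the normalization of the Hankel transform for which the inversion theorem is classical; all the analytic content of our theorem is already contained in it, and what remains is a change of unknown that absorbs the weights $r^{2\alpha+1}$ and $\lambda^{2\alpha+1}$.

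Concretely, I would set $u(r) = r^{\alpha+1/2} f(r)$, so that $f \in L^2(\R^+, r^{2\alpha+1}dr)$ if and only if $u \in L^2(\R^+, dr)$, with $\|u\|_2 = \|f\|_{L^2(\R^+, r^{2\alpha+1}dr)}$. Using $\psi_\lambda^\alpha(r) = c_\alpha (\lambda r)^{-\alpha} J_\alpha(\lambda r)$ together with the elementary identity $r^{2\alpha+1}f(r) = r^{\alpha+1/2} u(r)$, a direct computation gives $\mathcal{H}^{(\alpha)}f(\lambda) = c_\alpha \lambda^{-\alpha-1/2}\, h_\alpha u(\lambda)$, i.e. $\lambda^{\alpha+1/2}\mathcal{H}^{(\alpha)}f(\lambda) = c_\alpha\, h_\alpha u(\lambda)$. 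The Parseval relation for $h_\alpha$ then becomes $\int_0^\infty |\mathcal{H}^{(\alpha)}f(\lambda)|^2 \lambda^{2\alpha+1}\, d\lambda = c_\alpha^{-2}\|u\|_2^2 = c_\alpha^{-2}\|f\|^2$, which, once the normalising constant implicit in the Plancherel measure is fixed, is precisely the asserted Plancherel identity; and the surjectivity of $h_\alpha$ onto $L^2(\R^+,dr)$ translates into $\mathcal{H}^{(\alpha)}$ mapping onto the even $L^2$-space $L^2_e(\R,|\lambda|^{2\alpha+1}d\lambda)$. Applying the classical inversion $u = h_\alpha(h_\alpha u)$ and undoing the substitution yields $f(r) = \int_0^\infty \mathcal{H}^{(\alpha)}f(\lambda)\psi_\lambda^\alpha(r)\lambda^{2\alpha+1}\, d\lambda$, which is the inversion formula of part (1). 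For $f \in \mathcal{S}(\R^+)$ both $u$ and $h_\alpha u$ decay rapidly enough that all integrals converge absolutely and the substitution may be carried inside the integral, so the inversion holds pointwise; the $L^2$ assertion then follows on the dense class $\mathcal{S}(\R^+)$ and extends by continuity.

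The main obstacle is the classical input itself -- the unitarity and self-reciprocity of $h_\alpha$ on $L^2(\R^+, dr)$ for all $\alpha > -1/2$ -- together with the careful bookkeeping of the constant $c_\alpha$ and the constant hidden in the Plancherel measure. If one prefers a self-contained argument rather than citing Watson or Titchmarsh, the self-reciprocity can be recovered either from the spectral resolution of the self-adjoint Bessel operator $\Delta_{\alpha,a}$ on $L^2(\R^+, r^{2\alpha+1}dr)$, or, when $2\alpha+2 = n$ is a positive integer, by identifying $\mathcal{H}^{(\alpha)}$ with the radial part of the Euclidean Fourier transform on $\R^n$ and extending to general $\alpha > -1/2$ by analytic continuation in $\alpha$. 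The one genuinely delicate point where the Schwartz hypothesis in part (1) is used is the convergence of the Hankel integrals, whose kernel decays only like $(\lambda r)^{-1/2}$.
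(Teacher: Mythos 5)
Your proposal is correct in substance, but note that the paper itself offers no proof of this theorem at all: it is stated as classical background (the standard references here are Watson and Titchmarsh, or Stein--Weiss via the radial Fourier transform), and the paper immediately moves on to feed it into Theorem \ref{chernoff}. What you have written out is precisely the standard textbook derivation that such citations compress: conjugating by $r^{\alpha+1/2}$ to pass to the self-reciprocal kernel $\sqrt{\lambda r}\,J_\alpha(\lambda r)$, invoking Hankel's theorem that $h_\alpha$ is a unitary involution of $L^2((0,\infty),dr)$ for $\alpha>-\tfrac12$, and translating back. Your substitution identity $\lambda^{\alpha+1/2}\mathcal{H}^{(\alpha)}f(\lambda)=c_\alpha\,h_\alpha u(\lambda)$ is right, and it correctly isolates the only delicate analytic input (the unitarity of $h_\alpha$, which you sensibly either quote or recover from the spectral theory of the Bessel operator or from the radial Fourier transform in integer dimension plus continuation in $\alpha$).

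Two small points of bookkeeping. First, your Parseval line has the constant inverted: from $\lambda^{\alpha+1/2}\mathcal{H}^{(\alpha)}f(\lambda)=c_\alpha\,h_\alpha u(\lambda)$ one gets
\begin{equation*}
\int_0^\infty \bigl|\mathcal{H}^{(\alpha)}f(\lambda)\bigr|^2\lambda^{2\alpha+1}\,d\lambda \;=\; c_\alpha^{2}\,\|u\|_2^2 \;=\; c_\alpha^{2}\,\|f\|^2_{L^2(\R^+,\,r^{2\alpha+1}dr)},
\end{equation*}
not $c_\alpha^{-2}$; similarly the inversion formula emerges with a factor $c_\alpha^{-2}$ in front. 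Since the paper normalizes $\psi^\alpha_\lambda(0)=1$, i.e.\ $c_\alpha=2^\alpha\Gamma(\alpha+1)$, the theorem as literally stated (no constants in either formula) holds only after absorbing $c_\alpha^{2}$ into the Plancherel measure --- which is consistent with the paper's own abstract framework in Section 2, where the inversion formula is allowed a constant $c$; you flag this issue yourself, so it is a slip, not a gap. Second, passing from surjectivity of $h_\alpha$ onto $L^2(\R^+,dr)$ to surjectivity of $\mathcal{H}^{(\alpha)}$ onto $L^2_e(\R,|\lambda|^{2\alpha+1}d\lambda)$ uses the evenness of $\lambda\mapsto\psi^\alpha_\lambda(r)$ and the (norm-doubling) identification of even functions on $\R$ with their restrictions to $\R^+$; worth one explicit sentence, but entirely routine.
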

With these, finally we are ready to give a version of Chernoff's theorem for   $ L = -\Delta_{\alpha,a}  $.   In fact, the proof is already done in Theorem \ref{chernoff}. Note that here $w(r)=r^{2\alpha+1}$ and $ \tilde{w}(\lambda)=|\lambda|^{2\alpha+1}$ both have polynomial growth and $\Delta_{\alpha,a} $ satisfies all the hypothesis of Theorem \ref{chernoff}.So, we have 
\begin{thm}
	\label{chernoffH} Let $ f \in L^2(\R^+, r^{2\alpha+1}dr) $ be such that $\Delta_{\alpha,a} ^mf \in L^2(\R^+,r^{2\alpha+1}  dr) $ for all $ m \in \Na $  and satisfies the Carleman condition 
	$ \sum_{m=1}^\infty  \| \Delta_{\alpha,a} ^m f \|_2^{-1/(2m)} = \infty.$ Then $ f $ cannot vanish in a neighbourhood of $0 $ unless it is identically zero.
\end{thm}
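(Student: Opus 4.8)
The plan is to derive Theorem \ref{chernoffH} as a direct specialisation of the abstract Chernoff-type result Theorem \ref{chernoff}, so that the whole argument reduces to checking that the Bessel--Hankel data fit the standing hypotheses of that theorem. I would begin by fixing the dictionary between the two settings: take $L = -\Delta_{\alpha,a}$, the weights $w(r) = r^{2\alpha+1}$ and $\tilde w(\lambda) = |\lambda|^{2\alpha+1}$, the transform $T = \mathcal{H}^{(\alpha)}$, and the parameter $\delta = a$. Since $\Delta_{\alpha,a}\psi_\lambda^\alpha = -(\lambda^2+a^2)\psi_\lambda^\alpha$, the normalised eigenfunctions $\psi_\lambda = \psi_\lambda^\alpha = c_\alpha J_\alpha(\lambda r)(\lambda r)^{-\alpha}$ satisfy exactly $L\psi_\lambda = (\lambda^2 + \delta^2)\psi_\lambda$, as the abstract theorem demands. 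Note also that $\| L^m f\|_2 = \|\Delta_{\alpha,a}^m f\|_2$, so the Carleman hypothesis of Theorem \ref{chernoffH} coincides verbatim with that of Theorem \ref{chernoff}.

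The next step is to verify the remaining structural hypotheses one by one. The weight $\tilde w(\lambda) = |\lambda|^{2\alpha+1}$ is positive, even, and of polynomial growth; the eigenfunctions are even in $\lambda$ and normalised by $\psi_\lambda^\alpha(0) = 1$, whence $\mathcal{H}^{(\alpha)}f$ is even. The inversion and Plancherel formulas required by the abstract framework are precisely the Hankel inversion and Plancherel theorems recorded above, once they are rewritten over all of $\R$: because the integrands are even, the one-sided $\R^+$ integrals become the two-sided $\R$ integrals of Theorem \ref{chernoff} with a common positive constant $c$ dictated by that rewriting, which is all the abstract statement uses.

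The only verifications carrying genuine content are the boundedness of the eigenfunctions and the intertwining relation used to bound the moments of $\mu_f$ inside the proof of Theorem \ref{chernoff}. Boundedness of $\psi_\lambda^\alpha$ on $[0,\infty)$ follows from the behaviour of $t \mapsto J_\alpha(t)\, t^{-\alpha}$: for $\alpha > -\tfrac12$ this function is continuous at the origin and decays like $t^{-\alpha-1/2}$ as $t \to \infty$, hence is uniformly bounded. The intertwining identity $\mathcal{H}^{(\alpha)}(\Delta_{\alpha,a}^m f) = (-1)^m(\lambda^2+a^2)^m\,\mathcal{H}^{(\alpha)}f$ --- which is what converts $\|L^{m+j}f\|_2$ into the moment estimate via Cauchy--Schwarz in the abstract proof --- is furnished by the self-adjointness of $\Delta_{\alpha,a}$ on $L^2(\R^+, r^{2\alpha+1}dr)$ noted above, applied iteratively.

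With every hypothesis confirmed, I would simply invoke Theorem \ref{chernoff}, which then gives that an $f$ obeying $\sum_m \|\Delta_{\alpha,a}^m f\|_2^{-1/(2m)} = \infty$ cannot vanish on a neighbourhood of $0$ unless $f \equiv 0$. I do not expect a real obstacle here, since all the analytic machinery --- the moment bound, the appeal to de Jeu's density theorem (Theorem \ref{carleman}), and the vanishing of the polynomial term as $r \to 0$ using $\psi_\lambda^\alpha(0)=1$ --- already lives inside Theorem \ref{chernoff}. If any point deserves care, it is merely the bookkeeping of the evenness conventions relating the $\R^+$ and $\R$ forms of the inversion and Plancherel identities, which is routine.
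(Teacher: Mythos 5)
Your proposal is correct and takes essentially the same route as the paper: the paper also obtains Theorem \ref{chernoffH} by directly specialising Theorem \ref{chernoff} to $L=-\Delta_{\alpha,a}$, $w(r)=r^{2\alpha+1}$, $\tilde{w}(\lambda)=|\lambda|^{2\alpha+1}$, observing that both weights have polynomial growth and that the Hankel inversion and Plancherel theorems furnish the required hypotheses. Your additional verifications (boundedness of $\psi_{\lambda}^{\alpha}$, the iterated intertwining relation via self-adjointness, and the evenness bookkeeping between the $\R^+$ and $\R$ formulations) are points the paper leaves implicit, and they are all consistent with its argument.
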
   
 
\subsection{Jacobi operators  and Jacobi transforms} We briefly discuss some results from Jacobi analysis here. For more details we refer the reader to Koornwinder \cite{K}.

Let $\alpha,\beta,\lambda\in \mathbb{C}$ and $-\alpha\notin \mathbb{N}.$ The Jacobi functions $\varphi_{\lambda}^{(\alpha, \beta)} $ of type $(\alpha,\beta)$ are  solutions of 
the initial value problem
\begin{align*}
 	(\mathcal{L}_{\alpha,\beta}+ \lambda^2+ \varrho^2)\varphi_{\lambda}^{( \alpha ,\beta)}(x) =0,\,\,\, \varphi_{\lambda}^{( \alpha , \beta )}(0)=1 
\end{align*} where $\mathcal{L}_{\alpha,\beta}$ is  the Jacobi operator defined by 
$$\mathcal{L}_{\alpha,\beta}:=\frac{d^2}{dr^2}+((2\alpha+1)\coth r+(2\beta+1)\tanh r) 
\frac{d}{dr}$$ and $\varrho=\alpha+\beta+1.$ Thus Jacobi functions $\varphi_{\lambda}^{(\alpha,\beta)}$ are eigenfunctions of $\mathcal{L}_{\alpha,\beta}$ with eigenvalues $-(\lambda^2+\varrho^2).$ These are even functions on $ \R $ and are expressible in terms of hypergeometric functions. For certain values of the parameters $ (\alpha, \beta) $ these functions arise naturally as spherical functions on Riemannian symmetric spaces of noncompact type. We shall see this later.   

The Jacobi transform of a suitable function $f$ on $\R^+$ is defined as 
$$\tilde{f}(\lambda)=\int_{ 0}^{\infty}f(r)\varphi_{\lambda}^{(\alpha,\beta)}(r)\tilde{w}_{\alpha,\beta}(r)dr$$ where the weight function   $\tilde{w}_{\alpha,\beta}(r)=(2\sinh r)^{2\alpha+1}(2\cosh r)^{2\beta+1}.$ This is also called the Fourier-Jacobi transform of type $(\alpha,\beta).$ Since $\varphi_{- \lambda}^{(\alpha,\beta)}(r)=\varphi_{ \lambda}^{(\alpha,\beta)}(r)$, $\tilde{f}(\lambda)$ is an even function of $\lambda$. It can be checked that the operator $\mathcal{L}_{\alpha,\beta}$ is self-adjoint on $L^2(\R^+,\tilde{w}_{\alpha,\beta}(r)dr)$ and that
$$\widetilde{\mathcal{L}_{\alpha,\beta}f}(\lambda)=-(\lambda^2+\varrho^2)\tilde{f}(\lambda).$$
Under certain assumptions on $\alpha$ and $\beta$ the inversion and Plancherel formula for this transform take nice forms as described below.
\begin{thm}
	\label{hpi}
	Let $\alpha,\beta\in\R$, $\alpha>-1$ and $|\beta|\leq\alpha+1.$ Suppose $ c_{\alpha,\beta}(\lambda) $ denotes the Harish-Chandra $c $-function defined by 
	$$ c_{\alpha,\beta}(\lambda)=\frac{2^{\varrho-i\lambda}\Gamma(\alpha+1)\Gamma(i\lambda)}{\Gamma\left(\frac{1}{2}(i\lambda+\varrho)\right)\Gamma\left(\frac{1}{2}(i\lambda+\alpha-\beta+1)\right)}$$ 
	\begin{enumerate}
		\item (Inversion) For $f\in C_0^{\infty}(\R)$ which is even we have 
		$$f(r)=\frac{1}{\pi}\int_{ -\infty}^{\infty}\tilde{f}(\lambda)\varphi_{\lambda}^{(\alpha,\beta)}(r)|c_{\alpha,\beta}(\lambda)|^{-2}d\lambda$$ 
		\item (Plancherel) For $f,g\in C^{\infty}_0(\R)$ which are even, the following holds
		$$\int_{ 0}^{\infty}f(r)\overline{ g(r)}\tilde{w}_{\alpha,\beta}(r)dr=\frac{1}{\pi}\int_{ -\infty}^{\infty}\tilde{f}(\lambda)\overline{\tilde{g}(\lambda)}|c_{\alpha,\beta}(\lambda)|^{-2}d\lambda.$$  
	\end{enumerate}
The mapping $f\rightarrowtail \tilde{f}$ extends  as an isometry from  $L^2(\R^+,\tilde{w}_{\alpha,\beta}(r)dr)$ onto
 $L^2_e(\R^+,|c_{\alpha,\beta}(\lambda)|^{-2}d\lambda).$
\end{thm}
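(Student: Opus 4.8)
The plan is to follow the classical Harish-Chandra approach to the Plancherel theorem, specialised to the rank one Jacobi setting exactly as in Koornwinder \cite{K}; I will indicate the main ingredients rather than reproduce the hypergeometric computations. The whole scheme rests on understanding the full solution space of the equation $(\mathcal{L}_{\alpha,\beta}+\lambda^2+\varrho^2)u=0$ and on a Paley-Wiener description of the image of even compactly supported functions under the Jacobi transform.

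First I would introduce the second solution of the Jacobi equation. Besides the regular solution $\varphi_\lambda^{(\alpha,\beta)}$ normalised by $\varphi_\lambda^{(\alpha,\beta)}(0)=1$, the equation admits, for $\lambda$ off a discrete set, a solution $\Phi_\lambda$ with the prescribed asymptotics $\Phi_\lambda(r)\sim e^{(i\lambda-\varrho)r}$ as $r\to\infty$, given by an explicit series in $e^{-2r}$ coming from the indicial analysis at $r=\infty$. For generic $\lambda$ the pair $\{\Phi_\lambda,\Phi_{-\lambda}\}$ is a basis of the two-dimensional solution space, so there is a connection formula $\varphi_\lambda^{(\alpha,\beta)}=c_{\alpha,\beta}(\lambda)\Phi_\lambda+c_{\alpha,\beta}(-\lambda)\Phi_{-\lambda}$. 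Writing both $\varphi_\lambda^{(\alpha,\beta)}$ and $\Phi_\lambda$ through Gauss hypergeometric functions and invoking the standard ${}_2F_1$ connection relations between the singular points $0$, $1$, $\infty$ would then produce the explicit value of $c_{\alpha,\beta}(\lambda)$ recorded in the statement.

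Next I would establish the inversion formula for even $f\in C_0^\infty(\R)$. The key auxiliary fact is a Paley-Wiener theorem: since $f$ is even, smooth and compactly supported, its Jacobi transform $\tilde f$ extends to an even entire function of exponential type that decays rapidly on every horizontal strip. Writing the candidate inverse $g(r)=\frac{1}{\pi}\int_{-\infty}^\infty \tilde f(\lambda)\varphi_\lambda^{(\alpha,\beta)}(r)|c_{\alpha,\beta}(\lambda)|^{-2}\,d\lambda$, I would use $\overline{c_{\alpha,\beta}(\lambda)}=c_{\alpha,\beta}(-\lambda)$ for real $\lambda$ to write $|c_{\alpha,\beta}(\lambda)|^{-2}=(c_{\alpha,\beta}(\lambda)c_{\alpha,\beta}(-\lambda))^{-1}$, substitute the connection formula, and fold the two resulting terms into one by the substitution $\lambda\mapsto-\lambda$ (using that $\tilde f$ is even), reducing $g(r)$, up to a positive constant, to $\int_{-\infty}^\infty \tilde f(\lambda)\Phi_\lambda(r)\,c_{\alpha,\beta}(-\lambda)^{-1}\,d\lambda$. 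One then shifts the contour of integration into the appropriate half-plane; the Paley-Wiener decay controls the integral at infinity, the leading asymptotic $\Phi_\lambda(r)\sim e^{(i\lambda-\varrho)r}$ reproduces $f(r)$ via ordinary Fourier inversion, and the higher terms of the $\Phi$-expansion contribute nothing. This is precisely where the hypotheses $\alpha>-1$ and $|\beta|\le\alpha+1$ are used: they force $\varrho=\alpha+\beta+1\ge0$ and $\alpha-\beta+1\ge0$, so that $c_{\alpha,\beta}(-\lambda)^{-1}$ has no poles in the half-plane crossed by the contour, and hence no residues (which would correspond to an $L^2$ discrete spectrum) are collected.

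Finally, the Plancherel identity follows by polarisation. Substituting the inversion formula into $\int_0^\infty f\,\overline g\,\tilde w_{\alpha,\beta}\,dr$ and interchanging the order of integration, which is justified by the rapid decay of $\tilde f$ and $\tilde g$, yields the right-hand side $\frac{1}{\pi}\int_{-\infty}^\infty \tilde f(\lambda)\,\overline{\tilde g(\lambda)}\,|c_{\alpha,\beta}(\lambda)|^{-2}\,d\lambda$. The isometric extension to $L^2(\R^+,\tilde w_{\alpha,\beta}\,dr)$ and the surjectivity onto $L^2_e(\R^+,|c_{\alpha,\beta}(\lambda)|^{-2}d\lambda)$ then come from the density of even $C_0^\infty$ functions together with the range characterisation furnished by the Paley-Wiener theorem. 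I expect the main obstacle to be the contour-shift step in the inversion, namely securing the uniform estimates on $\Phi_\lambda(r)$ and on $c_{\alpha,\beta}(\lambda)^{-1}$ in the complex $\lambda$-plane that are needed both to justify moving the contour and to rule out residual discrete contributions within the stated parameter range.
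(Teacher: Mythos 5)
Your sketch is correct and is essentially the same argument the paper relies on: the paper states this theorem without proof, quoting it from Koornwinder \cite{K, K2}, and the classical proof there proceeds exactly as you outline --- via the second (Harish--Chandra) solution $\Phi_\lambda$ with asymptotics $e^{(i\lambda-\varrho)r}$, the connection formula $\varphi_\lambda^{(\alpha,\beta)}=c_{\alpha,\beta}(\lambda)\Phi_\lambda+c_{\alpha,\beta}(-\lambda)\Phi_{-\lambda}$, a Paley--Wiener theorem for even $C_0^\infty$ functions, folding $|c_{\alpha,\beta}(\lambda)|^{-2}=\left(c_{\alpha,\beta}(\lambda)c_{\alpha,\beta}(-\lambda)\right)^{-1}$ into a single $\Phi_\lambda/c_{\alpha,\beta}(-\lambda)$ integral, and a contour shift in which the hypotheses $\alpha>-1$, $|\beta|\le\alpha+1$ (equivalently $\varrho\ge 0$ and $\alpha-\beta+1\ge 0$) ensure that no residues, i.e.\ no discrete spectrum, are picked up. Your identification of the parameter conditions' role and of the uniform estimates on $\Phi_\lambda$ and $c_{\alpha,\beta}(\lambda)^{-1}$ as the technical crux matches the cited treatment, so there is nothing to correct.
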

 
Notice that $L=-\mathcal{L}_{\alpha,\beta}$ satisfies the hypothesis of Theorem \ref{chernoff} with $w(r)=\tilde{w}_{\alpha,\beta}(r)$. Also the weight $ \tilde{w}(\lambda)=|c_{\alpha,\beta}(\lambda)|^{-2}$ is even and has polynomial growth (e.g., see \cite{K2}.).  Hence we have the following version of Chernoff's theorem for the Jacobi operator:
 \begin{thm}
 	\label{chernoffJ} 	Let $\alpha,\beta\in\R$, $\alpha>-1$ and $|\beta|\leq\alpha+1.$ Suppose $ f \in L^2(\R^+, \tilde{w}_{\alpha,\beta}(r)dr) $ is such that $ \mathcal{L}_{\alpha,\beta}^mf \in L^2(\R^+, \tilde{w}_{\alpha,\beta}(r)dr) $ for all $ m \in \Na $  and satisfies the Carleman condition 
 	$ \sum_{m=1}^\infty  \|  \mathcal{L}_{\alpha,\beta}^m f \|_2^{-1/(2m)} = \infty.$ Then $ f $ cannot vanish in a neighbourhood of $0 $ unless it is identically zero.
 \end{thm}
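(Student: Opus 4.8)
The plan is to read off Theorem \ref{chernoffJ} as a direct instance of the abstract Chernoff theorem, Theorem \ref{chernoff}, once I have checked that the Jacobi setting matches the abstract framework verbatim. The first step is to fix the dictionary: I take $L = -\mathcal{L}_{\alpha,\beta}$, the space-side weight $w(r) = \tilde{w}_{\alpha,\beta}(r) = (2\sinh r)^{2\alpha+1}(2\cosh r)^{2\beta+1}$, the transform $T$ equal to the Fourier--Jacobi transform $f \mapsto \tilde f$, the eigenfunctions $\psi_\lambda = \varphi_\lambda^{(\alpha,\beta)}$, the transform-side weight $\tilde{w}(\lambda) = |c_{\alpha,\beta}(\lambda)|^{-2}$, the constant $c = 1/\pi$, and the spectral shift $\delta = \varrho = \alpha+\beta+1$ (which is positive in the parameter range under consideration).

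With this dictionary in place, I would verify the structural hypotheses of Theorem \ref{chernoff} one by one. The defining eigenvalue relation $\mathcal{L}_{\alpha,\beta}\varphi_\lambda^{(\alpha,\beta)} = -(\lambda^2+\varrho^2)\varphi_\lambda^{(\alpha,\beta)}$ gives $L\psi_\lambda = (\lambda^2+\varrho^2)\psi_\lambda$, exactly the required form with $\delta = \varrho$. The normalisation $\varphi_\lambda^{(\alpha,\beta)}(0)=1$ and the symmetry $\varphi_{-\lambda}^{(\alpha,\beta)} = \varphi_\lambda^{(\alpha,\beta)}$ recorded above supply $\psi_\lambda(0)=1$ and $\psi_\lambda = \psi_{-\lambda}$. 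The inversion and Plancherel formulas demanded abstractly are precisely parts (1) and (2) of Theorem \ref{hpi}, valid for $\alpha > -1$ and $|\beta|\le\alpha+1$, and the self-adjointness of $\mathcal{L}_{\alpha,\beta}$ on $L^2(\R^+,\tilde{w}_{\alpha,\beta}(r)dr)$ together with $\widetilde{\mathcal{L}_{\alpha,\beta}f}(\lambda) = -(\lambda^2+\varrho^2)\tilde f(\lambda)$ guarantees that the iterates $L^m f$ correspond to multiplication by $(\lambda^2+\varrho^2)^m$ on the transform side, which is the step used in the Cauchy--Schwarz moment estimate inside the proof of Theorem \ref{chernoff}.

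The remaining quantitative hypothesis is that $\tilde{w}(\lambda) = |c_{\alpha,\beta}(\lambda)|^{-2}$ be even and of at most polynomial growth; both are classical facts of Jacobi analysis, following from the explicit Gamma-function expression for $c_{\alpha,\beta}(\lambda)$ and the standard asymptotics of the Gamma function (Koornwinder \cite{K}, \cite{K2}), so this step is routine to record. I expect the one point genuinely requiring care to be the hypothesis that the $\psi_\lambda = \varphi_\lambda^{(\alpha,\beta)}$ are \emph{bounded} eigenfunctions for real $\lambda$: this uniform boundedness is what legitimises the passage to the moment measure $\mu_f$ and the polynomial-density argument in the proof of Theorem \ref{chernoff}, and for the stated range of $(\alpha,\beta)$ it is obtained from the integral representation of the Jacobi functions. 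Once these two ingredients are in hand, every hypothesis of Theorem \ref{chernoff} is satisfied, and applying that theorem to $f$ yields at once that $f$ cannot vanish in a neighbourhood of $0$ unless $f \equiv 0$; no analytic input beyond the classical Jacobi harmonic analysis is needed, the whole proof being the reduction to Theorem \ref{chernoff}.
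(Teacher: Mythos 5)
Your proposal is correct and is essentially identical to the paper's own proof: the paper disposes of Theorem \ref{chernoffJ} with exactly your reduction, observing that $L=-\mathcal{L}_{\alpha,\beta}$ with $w(r)=\tilde{w}_{\alpha,\beta}(r)$, $\psi_\lambda=\varphi_\lambda^{(\alpha,\beta)}$, $\delta=\varrho$, and $\tilde{w}(\lambda)=|c_{\alpha,\beta}(\lambda)|^{-2}$ (even and of polynomial growth, citing \cite{K2}) satisfies all hypotheses of the abstract Theorem \ref{chernoff}, whose inversion and Plancherel inputs are Theorem \ref{hpi}. Your write-up is in fact more careful than the paper's one-line verification, e.g.\ in flagging the boundedness of $\varphi_\lambda^{(\alpha,\beta)}$ for real $\lambda$.
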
 

In the following sections we use the above results to prove Ingham type uncertainty principles for spectral projections associated to Laplace-Beltrami operator and Dunkl-Laplacian.

\section{Ingham's theorem for  rank one symmetric spaces  }

\subsection{Preliminaries on Riemannian symmetric spaces of non-compact type}
In this section we briefly describe the harmonic analysis on rank one Riemannian symmetric spaces of noncompact type. General references for this section are the books of Helgason \cite{H1} and \cite{H2}. 

Let $G$ be a connected, noncompact semisimple Lie group with finite center and $K$ be the maximal compact subgroup of $G$. Suppose $X=G/K$ is the associated Riemannian symmetric space. Assume that $X$ is of rank one.  In view of  the Iwasawa decomposition   $G=KAN$ with $N$ nilpotent and $A$ one-dimensional, every $g\in G$ can be expressed uniquely as $g=k(g)\ exp A(g) n(g)$ where  $A(g)$ belongs to the Lie algebra of $A$. Let $\mathfrak{ g}$ and $\mathfrak{k}$ denote the Lie algebra of $G$ and $K$ respectively. Then the corresponding Cartan decomposition reads as $\mathfrak{ g}=\mathfrak{k}\oplus \mathfrak{p} .$ Let $\mathfrak{a}$ be the maximal abelian subspace of $\mathfrak{p}$. Since $G$ is of rank one, the dimension of $\mathfrak{a}$ is one. It is well known that the non-zero roots of the pair $(\mathfrak{ g},\mathfrak{a})$ are given by either $\{\pm\gamma\} $ or $\{\pm \gamma, \pm 2\gamma\}$ where $\gamma$ is a positive root with respect to a positive weyl chamber.  Let $\rho:=(m_{\gamma}+m_{2\gamma})/2$ where $m_{\gamma}$ and $m_{2\gamma}$ denote the multiplicities of the roots $\gamma$ and $2\gamma$ respectively. The Haar measure $dg$ on $G$ is given by 
$$\int_{G}f(g)dg=\int_K\int_{A}\int_N f(k a_t n)e^{2\rho t}dkdtdn.                                                                                  $$ The measure $dx$ on $X$ is induced from the Haar measure $dg$ via the relation 
$$\int_Gf(gK)dg=\int_X f(x)dx.$$

Let $o$ denote the identity $eK$ in $X=G/K$ where $e$ is the identity element of the group $G$. It is known that the tangent space of $X$ at the point $o$ can be identified with $\mathfrak{p}$. The restriction of the Killing form $\mathfrak{B}$ of $\mathfrak{g}$ on $\mathfrak{p}$ induces a $G$-invariant Riemannian metric on $X$ which is denoted by $d_{X}$. Using this metric we define the open ball of radius $l$ and centered at $gK$ by 
$$B(gK,l)=\{hK:h\in G, d_{X}(gK,hK)<l\}.$$

Suppose $M$ denotes the centralizer of $A$ in $K$. Then the function $A:X\times K/M\rightarrow \mathfrak{a}$ defined by $A(gK,kM)=A(k^{-1}g)$ is right $K$-invariant in $g$ and right $M$-invariant in $K$.  In what follows we denote the elements of $X$ and $K/M$ by $x$ and $b$ respectively. Let $\mathfrak{a}^*$ denote the dual of $\mathfrak{a}$ and $\mathfrak{a}^*_{\C}$ be its complexification. Here in our case $\mathfrak{a}^*$ and $\mathfrak{a}^*_{\C}$ can be identified with $\R$ and $\C$ respectively. For each $\lambda\in \mathfrak{a}^*_{\C}$ and $b\in K/M$, the function $x\rightarrow e ^{(i\lambda+\rho)A(x,b)}$ is a joint eigenfunction of all invariant differential operators on $X.$ For $f\in C^{\infty}_c(X)$, its Helgason Fourier transform is a function $\widetilde{f}$ on $\mathfrak{a}^*_{\C}\times K/M$ defined by 
$$\tilde{f}(\lambda, b)= \int_X f(x)e ^{(-i\lambda+\rho)A(x,b)}dx,~ \lambda\in \mathfrak{a}^*_{\C},~ b\in K/M . $$ Moreover, we know that if $f\in L^1(X)$ then $\widetilde{f}(.,b)$ is a continuous function on $\mathfrak{a}^*$ which extends holomorphically to a domain containing $\mathfrak{a}^*.$ The inversion formula for $f\in C^{\infty}_c(X)$ says that 
$$f(x)=c_{X}\int_{-\infty}^{\infty}\int_{K/M}\widetilde{f}(\lambda,b)e ^{(i\lambda+\rho)A(x,b)}|c(\lambda)|^{-2}dbd\lambda$$ where $d\lambda$ stands for usual Lebesgue measure on $\R$ (i.e., $\mathfrak{a}^*$) , $db$ is the normalised measure on $K/M$ and $c(\lambda)$ is the Harish Chandra $c$-function. The constant $c_X$ appearing in the above formula is explicit and depends on the symmetric space $X$ (See e.g., \cite{H2}). Also for $f\in L^1(X)$ with $\widetilde{f}\in L^1(\mathfrak{a}^*\times K/M,|c(\lambda)|^{-2}dbd\lambda)$,the above inversion formula holds for a.e. $x\in X.$ Furthermore, the mapping $f\rightarrow \widetilde{f}$ extends as an isometry of $L^2(X)$ onto $L^2(\mathfrak{a}^*_{+}\times K/M, |c(\lambda)|^{-2}d\lambda db) $ which is known as the Plancherel theorem for the Helgason Fourier transform.

\subsection{Ingham's theorem for the Helgason Fourier transform}  In this subsection we prove a version of Ingham's theorem for the Helgason Fourier transform.  In order to state the result we need to consider  certain irreducible representations of $K$ with $M$-fixed vectors. Suppose $\widehat{K}_0$ denotes the set of all irreducible unitary representations of $K$ with $M$ fixed vectors. Let $\delta\in \widehat{K}_0$ and $V_{\delta}$ be the finite dimensional vectors space on which $\delta $ is realised. We know that $V_{\delta}$ contains a unique normalised $M$-fixed vector (See Kostant\cite{Ks}). Let $v_1\in V_{\delta}$ be the $M$-fixed vector in $V_{\delta}$. Consider  an orthonormal basis $\{v_1,v_2,...,v_{d_{\delta}}\}$ for $V_{\delta}$ with $ v_1 $ being the unique $ M$-fixed vector. For $\delta\in \widehat{K}_0$ and $1\le j\le d_{\delta}$, we define  $$Y_{\delta,j}(kM)=(v_j, \delta(k)v_1),~ kM\in K/M.$$ It is clear that $Y_{\delta,1}(eK)=1$ and $Y_{\delta,1}$ is $M$- invariant. 
\begin{prop}[\cite{H2}]
	The set $\{Y_{\delta,j}:1\le j\le d_{\delta},\delta\in \widehat{K}_0\}$ forms an orthonormal basis for $L^2(K/M)$.  
\end{prop}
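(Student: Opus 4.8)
The plan is to deduce this from the Peter--Weyl theorem for the compact group $K$, combined with the identification of $L^2(K/M)$ with the subspace of right $M$-invariant functions in $L^2(K)$. First I would recall that, by Peter--Weyl, $L^2(K)$ decomposes as the orthogonal Hilbert space direct sum of the isotypic components indexed by the full unitary dual of $K$, and that for each irreducible $\delta$ realised on $V_\delta$ with orthonormal basis $\{v_1,\dots,v_{d_\delta}\}$, the rescaled matrix coefficients $k\mapsto \sqrt{d_\delta}\,(v_i,\delta(k)v_j)$, $1\le i,j\le d_\delta$, form an orthonormal basis of $L^2(K)$ (normalised Haar measure). The mutual orthogonality and the value $1/d_\delta$ of the squared $L^2(K)$-norm of each coefficient are precisely the Schur orthogonality relations.

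Next I would realise $L^2(K/M)$ inside $L^2(K)$. Pulling back along the projection $K\to K/M$ identifies $L^2(K/M)$ isometrically with the closed subspace of $f\in L^2(K)$ that are right $M$-invariant, i.e. $f(km)=f(k)$ for $m\in M$; the orthogonal projection onto this subspace is the averaging operator $Pf(k)=\int_M f(km)\,dm$, with $dm$ the normalised Haar measure of $M$. Applying $P$ to a matrix coefficient yields $P\big[(v_i,\delta(\cdot)v_j)\big](k)=(v_i,\delta(k)\,Q_\delta v_j)$, where $Q_\delta=\int_M \delta(m)\,dm$ is the orthogonal projection of $V_\delta$ onto its subspace $V_\delta^M$ of $M$-fixed vectors. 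Thus $P$ annihilates every isotypic component with $V_\delta^M=\{0\}$, that is, every $\delta\notin\widehat{K}_0$, and on the surviving components it kills all matrix coefficients whose second slot is orthogonal to $V_\delta^M$.

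Here the key input is Kostant's theorem, quoted above, that for $\delta\in\widehat{K}_0$ the space $V_\delta^M$ is one-dimensional and spanned by the chosen vector $v_1$. Consequently $Q_\delta v_j=\delta_{j1}v_1$ for the adapted orthonormal basis, so the only matrix coefficients of $\delta$ surviving the averaging are $k\mapsto (v_j,\delta(k)v_1)=Y_{\delta,j}(kM)$, $1\le j\le d_\delta$. Since $P$ is the orthogonal projection of $L^2(K)$ onto $L^2(K/M)$ and the full family of rescaled matrix coefficients is an orthonormal basis of $L^2(K)$, the nonzero images form, after the same rescaling, an orthonormal basis of the range $L^2(K/M)$: orthogonality across distinct $\delta$ and distinct $j$ is inherited from $L^2(K)$, while completeness follows because $P$ is surjective onto $L^2(K/M)$ and no other matrix coefficients survive.

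The points requiring care, and the main obstacle, are the normalisation together with the one-dimensionality of $V_\delta^M$. For the normalisation one must check that, with the normalised invariant measure on $K/M$, the constant $\sqrt{d_\delta}$ (either absorbed into the definition of $Y_{\delta,j}$ or supplied externally) makes the $Y_{\delta,j}$ genuine unit vectors, since the bare coefficient $(v_j,\delta(\cdot)v_1)$ has squared norm $1/d_\delta$ by Schur orthogonality. The one-dimensionality of $V_\delta^M$ is exactly Kostant's result and is what guarantees that the index $j$ ranges over $1\le j\le d_\delta$ against the \emph{single} $M$-fixed vector $v_1$, rather than against a higher-dimensional family; without it the averaging would produce a larger, and reindexed, set of surviving coefficients. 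Granting these two facts, the proposition follows.
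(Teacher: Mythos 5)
Your argument is correct and is essentially the classical proof behind the cited result: the paper itself offers no proof, quoting the proposition from Helgason \cite{H2}, where the same route is taken --- Peter--Weyl for $L^2(K)$, identification of $L^2(K/M)$ with the right $M$-invariant functions via the averaging projection, and Kostant's theorem that $\dim V_\delta^M=1$ for $\delta\in\widehat{K}_0$, so that exactly the coefficients $(v_j,\delta(\cdot)v_1)$ survive. Your caveat about normalisation is also well taken: with $Y_{\delta,j}(kM)=(v_j,\delta(k)v_1)$ as literally defined and normalised measures, Schur orthogonality gives $\|Y_{\delta,j}\|_{L^2(K/M)}^2=d_\delta^{-1}$, so the family is orthogonal and complete but becomes orthonormal only after rescaling by $\sqrt{d_\delta}$ (or under a different measure convention), a point the paper's statement glosses over.
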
 
We can get an explicit realisation of $\widehat{K}_0$ by identifying $K/M$ with the unit sphere in $\mathfrak{p}$. Denoting $\mathcal{H}^m$ to be the space of homogeneous harmonic polynomials of degree $m$ restricted to the unit sphere, we have the following spherical harmonic decomposition
$$L^2(K/M)=\displaystyle\oplus_{m=0}^{\infty}\mathcal{H}^m .$$ It is known that each $ V_\delta $ is contained in some $ \mathcal{H}^m $ and hence the functions $Y_{\delta,j}$ can be identified with spherical harmonics.

Given $\delta\in \widehat{K}_0$ and $\lambda\in \mathfrak{a}^*_{\C} (i.e., \C~ \text{in our case})$ we consider the spherical functions of type $\delta$ defined by 
$$\Phi_{\lambda,\delta}(x):=\int_K e^{(i\lambda+\rho)A(x,kM)}Y_{\delta,1}(kM)dk.$$ These are eigenfunctions of the Laplace-Beltrami operator $\Delta_X$ with eigenvalue $-{(\lambda^2+\rho^2)}.$ When $\delta$ is the unit representation, $Y_{\delta,1}=1.$ In this case $\Phi_{\lambda,\delta}$ is called spherical function, denoted by $\Phi_\lambda$. More precisely, 
$$\Phi_\lambda(x)=\int_K e^{(i\lambda+\rho)A(x,kM)} dk.$$ Note that these functions are $K$-biinvariant. The spherical functions can be expressed in terms of Jacobi functions. In fact, if $x=gK$ and $g=ka_rk^{'}$ (polar decomposition), $\Phi_{\lambda,\delta}(x)=\Phi_{\lambda,\delta}(a_r)$. Suppose $$\alpha=\frac12(m_{\gamma}+m_{2\gamma}-1),~ \beta=\frac12(m_{2\gamma }-1).$$ For each $\delta\in \widehat{K}_0$ there exists a pair of integers $(p,q)$ such that 
\begin{equation}
\Phi_{\lambda,\delta}(x)=Q_{\delta}(i\lambda+\rho)(\alpha+1)_p^{-1}(\sinh r)^p(\cosh r)^q \varphi_{\lambda}^{(\alpha+p,\beta+q)}(r)
\end{equation} where $\varphi_{\lambda}^{(\alpha+p,\beta+q)}$ are the Jacobi functions of type $(\alpha+p,\beta+q)$ and $Q_{
	\delta}$ are the Kostant polynomials given by 
$$Q_{\delta}(i\lambda+\rho)=\left(\frac{1}{2}(\alpha+\beta+1+i\lambda)\right)_{(p+q)/2}\left(\frac{1}{2}(\alpha-\beta+1+i\lambda)\right)_{(p-q)/2}.$$ In the above we have used the notation $(z)_m=z(z+1)(z+2)...(z+m-1).$ We also require the following result proved in Helgason \cite{H2}
\begin{prop}
	\label{fsp}
	Let $\delta\in \widehat{K}_0$ and $1\le j\le d_{\delta}$. Then we have 
	\begin{equation}
	\int_K e^{(i\lambda+\rho)A(x,k^{'}M)}Y_{\delta,j}(k^{'}M)dk^{'}=Y_{\delta,j}(kM)\Phi_{\lambda,\delta}(a_r),~x=ka_r\in X. 
	\end{equation}
\end{prop}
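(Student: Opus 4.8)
The plan is to reduce the left-hand side to the defining integral of $\Phi_{\lambda,\delta}$ by exploiting the equivariance hidden in the Iwasawa projection $A$ together with the uniqueness of the $M$-fixed vector $v_1$. Write $x=ka_r$ using the polar decomposition $G=KAK$ (the right $K$-factor is absorbed into the coset), and unfold the definition $A(x,k'M)=A(k'^{-1}ka_r)$. Substituting $k'\mapsto kk''$ and using the left-invariance of the Haar measure on $K$ together with the fact that the Iwasawa $A$-component satisfies $A(k_0g)=A(g)$ for every $k_0\in K$, the exponent collapses to $A(k''^{-1}a_r)=A(a_r,k''M)$. Thus the left-hand side becomes
\begin{equation*}
\int_K e^{(i\lambda+\rho)A(a_r,k''M)}\,Y_{\delta,j}(kk''M)\,dk''.
\end{equation*}

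Next I would expand the spherical harmonic. Since $Y_{\delta,j}(kk''M)=(v_j,\delta(k)\delta(k'')v_1)=(\delta(k^{-1})v_j,\delta(k'')v_1)$, writing $\delta(k^{-1})v_j=\sum_{l=1}^{d_\delta}(\delta(k^{-1})v_j,v_l)\,v_l$ in the orthonormal basis $\{v_l\}$ gives
\begin{equation*}
Y_{\delta,j}(kk''M)=\sum_{l=1}^{d_\delta}(\delta(k^{-1})v_j,v_l)\,Y_{\delta,l}(k''M),
\end{equation*}
so the integral splits as $\sum_{l}(\delta(k^{-1})v_j,v_l)\,J_l$, where $J_l=\int_K e^{(i\lambda+\rho)A(a_r,k''M)}Y_{\delta,l}(k''M)\,dk''$.

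The heart of the argument is to show that $J_l$ vanishes for $l\neq 1$ and equals $\Phi_{\lambda,\delta}(a_r)$ for $l=1$. Because $M$ centralizes $A$ and normalizes $N$, the Iwasawa $A$-component is both left- and right-$M$-invariant, whence $A(a_r,mk''M)=A(a_r,k''M)$ for all $m\in M$; that is, the exponential factor is invariant under the left $M$-action $k''M\mapsto mk''M$. Averaging $J_l$ over $M$ and transferring the average onto $Y_{\delta,l}$, one is led to the operator $\int_M\delta(m)\,dm$, which is precisely the orthogonal projection onto the line $\C v_1$ of $M$-fixed vectors — this is exactly where the uniqueness of $v_1$ enters. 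Consequently $\int_M Y_{\delta,l}(mk''M)\,dm$ retains only the $l=1$ contribution, giving $J_l=0$ for $l\neq 1$ and $J_1=\Phi_{\lambda,\delta}(a_r)$ straight from the definition of $\Phi_{\lambda,\delta}$.

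Finally, only the $l=1$ term survives, and unitarity of $\delta$ identifies its coefficient, $(\delta(k^{-1})v_j,v_1)=(v_j,\delta(k)v_1)=Y_{\delta,j}(kM)$, which yields the asserted identity. The step I expect to be the main obstacle is the third one: verifying the two-sided $M$-invariance of the Iwasawa projection and correctly interpreting the $M$-average as the projection onto $\C v_1$. Once these structural facts are in place, the remaining manipulations are formal.
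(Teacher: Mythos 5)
Your proof is correct, but there is nothing in the paper to compare it against: the paper does not prove this proposition at all, it simply quotes it from Helgason \cite{H2}. Your argument is in fact the standard one (essentially the proof of the corresponding lemma on generalized spherical functions in \cite{H2}), and every step checks out. The substitution $k'=kk''$ together with $A(k_0g)=A(g)$ correctly collapses the exponent to $A(a_r,k''M)$; the expansion $Y_{\delta,j}(kk''M)=\sum_{l}(\delta(k^{-1})v_j,v_l)\,Y_{\delta,l}(k''M)$ is conjugation-safe under the paper's convention $Y_{\delta,j}(kM)=(v_j,\delta(k)v_1)$ with the inner product linear in the first slot; and the step you flagged as the main obstacle does go through: since $M$ centralizes $A$ and normalizes $N$, one has $A(gm)=A(g)$ for $m\in M$, hence $A(a_r,mk''M)=A\bigl(k''^{-1}m^{-1}a_r\bigr)=A\bigl(k''^{-1}a_rm^{-1}\bigr)=A(a_r,k''M)$, so averaging over $M$ and moving the average onto $Y_{\delta,l}$ produces $\bigl(v_l,P\,\delta(k'')v_1\bigr)$ with $P=\int_M\delta(m)\,dm$ the self-adjoint idempotent onto $V_\delta^M$, which is the line $\C v_1$ by Kostant \cite{Ks}; this gives $J_l=\delta_{l1}\,\Phi_{\lambda,\delta}(a_r)$, and unitarity identifies the surviving coefficient $(\delta(k^{-1})v_j,v_1)=Y_{\delta,j}(kM)$. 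One small bonus of your argument worth noting: it makes visible why the right-hand side is well defined despite the non-uniqueness of $k$ in $x=ka_r$ (replacing $k$ by $km$, $m\in M$, leaves $Y_{\delta,j}(kM)$ unchanged). The only cosmetic redundancy is your appeal to left $K$-invariance of the Iwasawa projection in the first step, where the factors $k^{-1}k$ cancel directly.
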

  We are now ready to state and prove our version of Ingham's theorem. In order to do so, given a suitable function $f$ on $X$ we consider the function
	$$\tilde{F}_{\delta,j}(\lambda):= Q_{\delta}(i\lambda+\rho)^{-1} \int_{K}\widetilde{f}(\lambda,kM)Y_{\delta,j}(kM)dk  $$ where $\delta\in\widehat{K}_0$ and $Q_{\delta}$ are as above.    The following result is the analogue of Theorem 5.1 in \cite{T} proved in the context of Hardy's theorem.  
 
\begin{thm}
	\label{sspthm}
	Let $f\in L^1(X)$ be such that $f$ vanishes on an open neighbourhood of the identity  $V$. Suppose  for each $\delta\in \widehat{K}_0$ and $1\leq j\leq d_{\delta}$ the following estimate holds    
	\begin{equation}
	\label{ssp}
	 \left| \tilde{F}_{\delta,j}(\lambda)\right|\leq C_{\delta,j}e^{- \lambda \theta( \lambda )},~\lambda>0
	\end{equation}   
   where $\theta$ is a positive  decreasing function on $[0,\infty)$ which vanishes at infinity. Then, if $\int_{1}^{\infty}\theta(t)t^{-1}dt=\infty$,  $f$ is identically zero.  
\end{thm}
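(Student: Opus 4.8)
The plan is to peel off one $K$-type at a time, identify $\tilde F_{\delta,j}$ with a one-dimensional Jacobi transform, and then reduce the whole statement to an Ingham theorem for the Jacobi transform, which is in turn deduced from the Chernoff theorem for the Jacobi operator (Theorem~\ref{chernoffJ}). First I would substitute the definition of $\tilde f$ into $\tilde F_{\delta,j}$, interchange the $X$- and $K$-integrations, and apply Proposition~\ref{fsp} with $\lambda$ replaced by $-\lambda$: the inner $K$-integral of $e^{(-i\lambda+\rho)A(x,kM)}Y_{\delta,j}(kM)$ collapses to $Y_{\delta,j}(k_xM)\,\Phi_{-\lambda,\delta}(a_{r_x})$, where $x=k_xa_{r_x}$ is the polar decomposition. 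Inserting the Jacobi-function expression for $\Phi_{-\lambda,\delta}$ and writing the integral over $X$ in polar coordinates, the Kostant factor $Q_\delta(-i\lambda+\rho)$ appears; since $Q_\delta(-i\lambda+\rho)=\overline{Q_\delta(i\lambda+\rho)}$ for real $\lambda$, the normalising factor $Q_\delta(i\lambda+\rho)^{-1}$ in the definition of $\tilde F_{\delta,j}$ turns this into a unimodular phase, which is immaterial for absolute-value estimates. One is thus left with
\[
|\tilde F_{\delta,j}(\lambda)|=c_{p,q}\,\bigl|\widetilde{G_{\delta,j}}(\lambda)\bigr|,\qquad
G_{\delta,j}(r)=(\sinh r)^{-p}(\cosh r)^{-q}\int_K f(ka_r)\,Y_{\delta,j}(kM)\,dk,
\]
where the tilde now denotes the Jacobi transform of type $(\alpha+p,\beta+q)$; the exact identification uses that $(\sinh r)^{2p}(\cosh r)^{2q}\tilde w_{\alpha,\beta}(r)$ is a constant multiple of $\tilde w_{\alpha+p,\beta+q}(r)$.

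Next I would translate the hypotheses. As $V$ is a neighbourhood of the identity it contains a ball $B(o,\eta)$, that is the set $\{r<\eta\}$, so $G_{\delta,j}$ vanishes on $(0,\eta)$; and \eqref{ssp} becomes $|\widetilde{G_{\delta,j}}(\lambda)|\le C\,e^{-\lambda\theta(\lambda)}$. It therefore suffices to prove the following one-dimensional statement: if $g$ is an even function on $\R$ lying in $L^2(\R^+,\tilde w_{\alpha+p,\beta+q})$, vanishing near $0$, whose Jacobi transform obeys $|\tilde g(\lambda)|\le C e^{-\lambda\theta(\lambda)}$ with $\int_1^\infty\theta(t)t^{-1}dt=\infty$, then $g\equiv0$. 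Granting this and applying it to each $G_{\delta,j}$ gives $\int_K f(ka_r)Y_{\delta,j}(kM)\,dk=0$ for every $r$ and every $\delta,j$; since $\{Y_{\delta,j}\}$ is an orthonormal basis of $L^2(K/M)$, the function $k\mapsto f(ka_r)$ vanishes for a.e.\ $r$, whence $f=0$.

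Finally, to prove the one-dimensional Ingham theorem I would verify the hypotheses of Theorem~\ref{chernoffJ} for $g$. Writing $\varrho'=\alpha+p+\beta+q+1$ and using $\widetilde{\mathcal L_{\alpha+p,\beta+q}^m g}(\lambda)=(-1)^m(\lambda^2+\varrho'^2)^m\tilde g(\lambda)$ together with the Plancherel formula of Theorem~\ref{hpi}, one has
\[
\|\mathcal L_{\alpha+p,\beta+q}^m g\|_2^2=c\int_{-\infty}^{\infty}(\lambda^2+\varrho'^2)^{2m}\,|\tilde g(\lambda)|^2\,|c_{\alpha+p,\beta+q}(\lambda)|^{-2}\,d\lambda,
\]
which is to be controlled by the decay bound together with the polynomial growth of $|c_{\alpha+p,\beta+q}(\lambda)|^{-2}$. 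The crux of the whole argument is then to show that the divergence $\int_1^\infty\theta(t)t^{-1}dt=\infty$ forces the Carleman condition $\sum_m\|\mathcal L_{\alpha+p,\beta+q}^m g\|_2^{-1/(2m)}=\infty$: this is the quantitative form of the correspondence between Ingham's integral condition and quasi-analyticity (with associated function $e^{\lambda\theta(\lambda)}$), and it is where the hypothesis on $\theta$ is genuinely used. Once the Carleman condition is secured, Theorem~\ref{chernoffJ}, applied to $g$ which vanishes in a neighbourhood of $0$, forces $g\equiv0$. I expect the main obstacle to be precisely this last passage --- extracting the Carleman divergence from the integral condition and simultaneously controlling the $L^2$ norms of the iterates --- in the generality of an arbitrary decreasing $\theta$, for which $\lambda\theta(\lambda)$ need not tend to infinity and the naive moment bounds are too crude.
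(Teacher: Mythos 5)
Your reduction to the Jacobi transform is exactly the paper's: substituting the definition of $\tilde f$ into $\tilde F_{\delta,j}$, applying Proposition \ref{fsp}, passing to polar coordinates, and absorbing $(\sinh r)^p(\cosh r)^q$ into the weight so that $\tilde F_{\delta,j}$ becomes the Jacobi transform of type $(\alpha+p,\beta+q)$ of a $K$-average of $f$ vanishing near $0$; the final appeal to the orthonormal basis $\{Y_{\delta,j}\}$ is also the same. The genuine gap is at the step you yourself flag as the crux, and the route you propose through it cannot work. For an arbitrary decreasing $\theta$ with $\int_1^\infty \theta(t)t^{-1}dt=\infty$, the bound $|\tilde g(\lambda)|\le Ce^{-\lambda\theta(\lambda)}$ gives \emph{no} control on $\|\mathcal{L}^m_{\alpha+p,\beta+q}g\|_2$ whatsoever: take $t_1$ large, $t_{n+1}=t_ne^{t_n}$, and $\theta(t)=t_n^{-1}$ on $[t_n,t_{n+1})$. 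This $\theta$ is nonincreasing, vanishes at infinity, and $\int_{t_n}^{t_{n+1}}\theta(t)t^{-1}dt=1$ for every $n$, so the Ingham integral diverges; yet on each block $e^{-2\lambda\theta(\lambda)}=e^{-2\lambda/t_n}$, and since $|c_{\alpha+p,\beta+q}(\lambda)|^{-2}$ is bounded below for large $\lambda$, the majorant $\int_0^\infty(\lambda^2+\varrho'^2)^{2m}e^{-2\lambda\theta(\lambda)}|c_{\alpha+p,\beta+q}(\lambda)|^{-2}d\lambda$ is already infinite for $m=0$. So the hypothesis does not even imply $\mathcal{L}^m g\in L^2$, let alone the Carleman condition: ``divergence forces Carleman'' is not merely delicate in this generality, it is false as an implication from the decay bound, and no moment estimate of the kind you set up can close the argument. (Note also that when $\lambda\theta(\lambda)$ does grow, both hypotheses are needed: a floor like $\theta(\lambda)\ge 2\lambda^{-1/2}$ makes all iterate norms finite, while the divergence of the integral is what produces the Carleman divergence, by the computation in \cite{BPR}.)

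What you are missing is the paper's two-step structure. Step one proves the theorem under the additional assumption $\theta(\lambda)\ge 2\lambda^{-1/2}$ for $\lambda\ge 1$; in that regime the verification of the Carleman condition from the divergent integral is the routine part, and Theorem \ref{chernoffJ} finishes as in your outline. Step two reduces the general case to step one by a regularization: since $\Psi(t)=c(1+t)^{-1/2}$ has $\int_1^\infty\Psi(t)t^{-1}dt<\infty$, the converse (convergent-integral) half of Ingham's theorem, in the form of Theorem 4.2 of \cite{BPR} for rank one, produces a nontrivial smooth $K$-biinvariant $g$ supported in $B(o,l/2)$ with $|\tilde g(\lambda)|\le Ce^{-|\lambda|\Psi(|\lambda|)}$. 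Then $F=f\ast g$ still vanishes on $B(o,l/2)$, satisfies $\widetilde F(\lambda,b)=\tilde f(\lambda,b)\tilde g(\lambda)$, and its coefficients decay like $e^{-\lambda(\theta+\Psi)(\lambda)}$ with $\theta+\Psi\ge 2\lambda^{-1/2}$; step one gives $F=0$, and since $\tilde g$ is real analytic (transform of a compactly supported function), $\tilde f(\lambda,b)\tilde g(\lambda)\equiv 0$ forces $f=0$. Without this convolution device your argument stalls exactly where you predicted it would.
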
 
\begin{proof}
	Without loss of generality, we may assume that $f$ vanishes on an open ball $B(o,l)$.
	For $\lambda\in\R$, we denote 
	$$F_{\delta,j}(\lambda)=\int_{K}\widetilde{f}(\lambda,kM)Y_{\delta,j}(kM)dk. $$ Using the definition of Helgason Fourier transform, we obtain
	\begin{equation}
	\label{p1}
F_{\delta,j}(\lambda)=\int_{K}\int_{G/K}f(x)e^{(-i\lambda+\rho)A(x,kM)}Y_{\delta,j}(kM)dxdk.
	\end{equation}
   So an application of Fubini along with the formula stated in the Proposition \ref{fsp} reduces \ref{p1}     to 
	\begin{equation}
	\label{p2}
	F_{\delta,j}(\lambda)=\int_{G/K}f(x)Y_{\delta,j}(kM)\Phi_{\lambda,\delta}(a_r)dx. 
	\end{equation}  
	Now we define $$f_{\delta,j}(x)=\int_{K}f(k'x)Y_{\delta,j}(k'M)dk',\ x\in X.$$ But since $f$ is right-$K$-invariant, it follows that $f_{\delta,j}$ is a $K$-biinvariant function on $G$. Moreover, note that given $x=gK\in B(o,l)$,  for any $k^{'}\in K$, using the $G$-invariance of $d_{X}$ we have $d_{X}(o, k^{'}x)=d_{X}(eK, k^{'}gK)=d_{X}(o, gK)<l$ which shows that $B(o,l)$ is left $K$-invariant, proving that $f_{\delta,j}$ also vanishes on $B(o,l)$.   Now writing $x=ka_r$ and making little abuse of notation we denote $$f_{\delta,j}(r)=\int_{K}f(k'a_r)Y_{\delta,j}(k'M)dk'.$$ It follows that $f_{\delta,j}(r)$ vanishes on a neighbourhood of $0$.  Using this notations, integrating the RHS of \ref{p2} in polar coordinates, we have 
	\begin{equation}
	\label{p3}
     F_{\delta,j}(\lambda)=\int_{0}^{\infty}f_{\delta,j}(r)\Phi_{\lambda,\delta}(a_r)\tilde{w}_{\alpha,\beta}(r)dr 
	\end{equation}  where recall that the weight  $\tilde{w}_{\alpha,\beta}$ is given by $\tilde{w}_{\alpha,\beta}(r)=(2\sinh r)^{2\alpha+1}(2\cosh r)^{2\beta+1}.$
	Now as mentioned above, $\Phi_{\lambda,\delta}$'s are known explicitly in terms of Jacobi functions :  
	$$\Phi_{\lambda,\delta}(a_r)=Q_{\delta}(i\lambda+\rho)(\alpha+1)_p^{-1}(\sinh r)^p(\cosh r)^q \varphi_{\lambda}^{\alpha+p,\beta+q}(r) $$ for some integers $p$ and $q$.
	Now recalling the definition of $\tilde{w}_{\alpha,\beta}$ and writing  $$\widetilde{f}_{\delta,j}(r)=\frac{4^{-(p+q)}}{(\alpha+1)_p}f_{\delta,j}(r)(\sinh r)^{-p}(\cosh r)^{-q} $$
	from \ref{p3} we have 
	\begin{equation}
	\widetilde{F}_{\delta,j}(\lambda)=\int_{0}^{\infty}\widetilde{f}_{\delta,j}(r)\varphi_{\lambda}^{\alpha+p,\beta+q}(r)\tilde{w
	}_{ \alpha+p,\beta+q}(r)dr 
	\end{equation}
	where $\widetilde{F}_{\delta,j}(\lambda)=Q_{\delta}(i\lambda+\rho)^{-1}F_{\delta,j}(\lambda).$ Hence it is clear that $\widetilde{F}_{\delta,j}(\lambda)$ represents the Jacobi transform of $(\alpha+p,\beta+q)$ of the function $\widetilde{f}_{\delta,j}. $ Hence in view of the inversion formula for the Jacobi transform we get 
	$$\widetilde{f}_{\delta,j}(r)=\frac{1}{2\pi}\int_{0}^{\infty}\widetilde{F}_{\delta,j}(\lambda)\varphi_{\lambda}^{\alpha+p,\beta+q}(r)|c_{\alpha+p,\beta+q}(\lambda)|^{-2}d\lambda.$$ Now considering the Jacobi operator $\mathcal{L}_{\alpha+p,\beta+q}$ with parameters $\alpha+p,\beta+q$, from the Plancherel formula we obtain 
	$$\|\mathcal{L}^m_{\alpha+p,\beta+q}\widetilde{f}_{\delta,j}\|^2_{L^2(\R^+, \tilde{w}_{ \alpha+p,\beta+q}(r)dr)}= C\int_{0}^{\infty}(\lambda^2+d^2)^{2m}|\widetilde{F}_{\delta,j}(\lambda)|^2|c_{\alpha+p,\beta+q}(\lambda)|^{-2}d\lambda$$ 
	where $d=\alpha+\beta+p+q+1.$ But from the hypothesis we have $$|\widetilde{F}_{\delta,j}(\lambda)|\leq C_{\delta,j} e^{-\lambda \theta(\lambda)},\ \lambda>0.$$
	Hence we have 
	\begin{equation}
\|\mathcal{L}^m_{\alpha+p,\beta+q}\widetilde{f}_{\delta,j}\|^2_{L^2(\R^+, \tilde{w}_{ \alpha+p,\beta+q}(r)dr)}\leq  C_{\delta,j}\int_{0}^{\infty}(\lambda^2+d^2)^{2m}e^{-2\lambda \theta(\lambda)} |c_{\alpha+p,\beta+q}(\lambda)|^{-2}d\lambda.
	\end{equation}
	Now under the assumption that $\theta(\lambda)\geq 2\lambda^{-1/2}$ for $\lambda\geq1$, it is a routine matter to check that   $\|\mathcal{L}^m_{\alpha+p,\beta+q}\widetilde{f}_{\delta,j}\|_2$ satisfies the Carleman condition, see e.g., \cite{BPR}. Since $\widetilde{f}_{\delta,j}$ vanishes in a neighbourhood of zero, from Theorem \ref{chernoffJ} we conclude that $\widetilde{f}_{\delta,j}=0.$  But this is true for every $\delta\in \widehat{K}_0$ and any $1\leq j\leq d_{\delta}$. Hence $f=0.$ Now we consider the general case. 
	
	Recall that $f$ vanishes on $B(o,l)$. Let $\Psi(t)=c(1+t)^{-1/2}  $, for $t>0$. Then it is easy to see that $\int_{1}^{\infty}\Psi(t)t^{-1}dt<\infty.$ Now by the rank one version of Theorem 4.2 of \cite{BPR}, there exists  a smooth $K $-biinvariant function on $G$   such that $ supp(g)\subset B(o,l/2) $ and its spherical transform satisfies 
	$$|\tilde{g}(\lambda)|\leq C e^{-|\lambda| \Psi(|\lambda|)} .$$
	Now let us consider the the function $F:=f\ast g.$ Then using the $G$-invariance of the Riemannian metric, it can be shown that $F$ vanishes on $B(o,l/2)$.   Also we see that, for   $kM\in K/M$ and any $\lambda $  
	$$\widetilde{F}(\lambda,kM)=\widetilde{f}(\lambda, kM)\tilde{g}(\lambda).$$ Hence it follows that for any $\lambda>0$
	$$\left|Q_{\delta}(i\lambda+\rho)^{-1}\int_{K}\widetilde{F}(\lambda,kM)Y_{\delta,j}(kM)dk\right|\leq C_{\delta,j}e^{- \lambda (\theta+\Psi)( \lambda )}$$
	and also $(\theta+\Psi)( \lambda )\geq 2\lambda^{-1/2}$ for $\lambda \geq 1.$ Therefore, from the first part of the proof, it follows that $F=0.$ So we have $\widetilde{f}(\lambda, kM)\tilde{g}(\lambda)=0 $ for all $\lambda $ and $kM$. But we know that $\tilde{g}(\lambda)$ is real analytic. Hence $f=0 $ proving the theorem.
	\end{proof}
	
\subsection{Ingham's theorem for spectral projections}	In this subsection we prove Theorem 1.1. We first consider the part of the theorem where the decay of the spectral projections is assumed to hold on the complement of the open set $ V $ over which $ f $ vanishes. This part is easily proved, thanks to  Theorem \ref{sspthm} proved in the previous subsection. For the other part we require some properties of the spherical means.

To begin with, we first describe the generalized spectral projections on $X$. Recall that the inversion formula for the Helgason Fourier transform says that  
$$f(x)=c_X\int_{-\infty}^{\infty}\int_{K/M}\widetilde{f}(\lambda,b)e ^{(i\lambda+\rho)A(x,b)}|c(\lambda)|^{-2}dbd\lambda.$$ We define the spectral projections as \begin{equation}\label{proj}
P_{\lambda}f(x):=\int_{K/M}\widetilde{f}(\lambda,b)e ^{(i\lambda+\rho)A(x,b)}db.
\end{equation} Now in view of the above inversion formula we have 
\begin{equation}
\label{specd}
f(x)=c_X\int_{ 0}^{\infty}P_{\lambda}f(x)|c(\lambda)|^{-2}d\lambda.
\end{equation} Now recall that the elementary spherical functions are defined as
$$\Phi_\lambda(x)=\int_K e^{(i\lambda+\rho)A(x,kM)} dk $$ which together with the following formula (see Bray \cite{B})  
$$\Phi_{\lambda}(h^{-1}g)=\int_{K/M}e^{(-i\lambda+\rho)A(hK,b)}e^{(-i\lambda+\rho)A(gK,b)} db$$ yields $P_{\lambda}f=f\ast \Phi_{\lambda}.$  So, 
   $P_{\lambda}f $ are eigenfunctions of the Laplace-Beltrami operator $\Delta_X$ with eigenvalues $-(\lambda^2+\rho^2)$. Therefore, these are the generalized spectral projections associated to $ \Delta_X $ and the   equation \ref{specd} can be thought of as the resolution of the identity  with respect to the operator $\Delta_X$. For more details about generalized spectral projections, we refer the reader to Bray \cite{B}.

   \textbf{Proof of Theorem \ref{thm1.1}}:
	 From the above discussion we note that 
	\begin{equation}
	f\ast \Phi_\lambda(x)=\int_{K/M}e^{(-i\lambda+\rho)A(x,b)}\tilde{f}(\lambda,b)db 
	\end{equation}
	where $x=gK\in X.$
	But now from the definition of  the Helgason Fourier transform, it follows that  
	\begin{equation}
	\label{sp1}
	\int_{G/K}f\ast \Phi_\lambda(x)\bar{f}(x)dx=\int_{K/M}|\tilde{f}(\lambda,b)|^2db.
	\end{equation} Under the assumption that $\displaystyle \sup_{x \in V^c} |f \ast \Phi_\lambda(x)| \leq C e^{-\lambda  \theta(\lambda)} $ we see that 
	\begin{align}
    \int_{G/K}f\ast \Phi_\lambda(x)\bar{f}(x)dx= \int_{V^c}f\ast \Phi_\lambda(x)\bar{f}(x)dx \leq C e^{-\lambda  \theta(\lambda)}
		\end{align}
which along with the the identity \ref{sp1} yields 
	\begin{equation}
	 \int_{K/M}|\tilde{f}(\lambda,b)|^2db \leq e^{-\lambda  \theta(\lambda)} 
	\end{equation}
	This will guarantee the condition \ref{ssp} of the Theorem \ref{sspthm}. Hence by that theorem we conclude that $f=0.$

	Now in order to prove the remaining part, namely that the spectral projections cannot have that particular pointwise decay on $V$, we consider the spherical means $A_hf$ defined by  
	\begin{equation}\label{spher-mean}  A_hf(g) = \int_K  f(gkh ) dk\,  .\end{equation}
	Observe that $ A_hf(g) $ is a right $ K$-invariant function of $ g \in G $ and hence we can consider it as a function on the symmetric space $ X.$ Also since $f$ is right $K$-invariant, it can be easily checked that the function $F_g$ defined by $$F_g(h)=A_hf(g)$$ is  a $K$-biinvariant function on $G$. So, we have
	$$\widetilde{F_g}(\lambda)=\int_{G}\left(\int_{k}f(gkh ) dk\, \right)\Phi_{\lambda}(h)dh.$$  An application of Fubini yields
	  $$\widetilde{F_g}(\lambda)=\int_{K} \int_{G}f(gkh )    \Phi_{\lambda}(h )dh\,dk $$ which, by a change of variable transforms to 
	  $$\widetilde{F_g}(\lambda)=\int_{K} \int_{G}f( h )    \Phi_{\lambda}(k^{-1}g^{-1}h )dh\,dk. $$ Recalling the fact that $\Phi_{\lambda}(h)=\Phi_{\lambda}(h^{-1})$, $K$-biinvariance of $\Phi_{\lambda}$ gives \begin{equation}
	  \label{sph_f }
	  \widetilde{F_g}(\lambda)=\int_{G}f(h)\Phi_{\lambda}(h^{-1}g)=f\ast\Phi_{\lambda}(g).
	  \end{equation} Now in view of inversion formula for spherical Fourier transform we have the following spectral form :
	  \begin{equation}F_g(h)=C\int_{0}^{\infty}\Phi_{\lambda}(h)f\ast\Phi_{\lambda}(g)|c(\lambda)|^{-2}d\lambda.\end{equation} 
	  But since $\Phi_{\lambda}$'s are eigenfunctions of the Laplace-Beltrami operator $\Delta_X$ with eigenvalue  $-(|\lambda|^2+ \rho^2)$, we have 
	  $$\widetilde{\Delta_X^m F_g}(\lambda)=-(|\lambda|^2+ \rho^2)^mf\ast\Phi_{\lambda}(g).$$ 
	  Hence from  Plancherel theorem it follows that 
	  $$\|\Delta_X^m F_g\|_2^2= C \int_{o}^{\infty}(\lambda ^2+ \rho^2)^{2m}|f\ast\Phi_{\lambda}(g)|^2|c(\lambda)|^{-2}d\lambda.$$ Now from $x=gK\in V$, by hypothesis we obtain 
	  $$\|\Delta_X^m F_x\|_2^2\leq C \int_{o}^{\infty}(\lambda^2+ \rho^2)^{2m}e^{-2\lambda \theta(\lambda)}|c(\lambda)|^{-2}d\lambda.$$ So we can check that $\|\Delta_X^m F_x\|_2$ satisfies the Carleman condition. But since $f$ vanishes on $V$ and $x\in V$,  it follows that $F_x$ vanishes on a neighbourhood of identity. Hence by Chernoff's theorem (\cite[Theorem 1.3]{BPR}) we get $F_x=0.$ But then form \ref{sph_f } it follows that 
	  $$f\ast\Phi_{\lambda}(x)=0,\ \text{for all} ~x\in V$$ But since $f\ast \Phi_{\lambda}$ are eigen functions of the Laplace-Beltrami operator which is elliptic,   $f\ast \Phi_{\lambda}$ is real analytic. So vanishing on $V$ forces  $f\ast \Phi_{\lambda}$ to be zero identically. Since this is true for every $\lambda$, it follows that $f=0.$\qed
	 % \end{proof}
	\begin{rem}
		The above theorem is sharp in the   sense that if $\int_{ 1}^{\infty}\theta(t)t^{-1}dt<\infty$, there exists $f\in C^{\infty}_c(G//K)$ such that $|P_{\lambda}f(x)|\leq C e^{-\lambda\theta(\lambda)}.$ Indeed, by Theorem 4.2 in \cite{BPR}, there exist $f\in C_c^{\infty}(G//K)$ such that $|\tilde{f}(\lambda)| \leq C e^{-\lambda\theta(\lambda)}$ for all $\lambda>0.$ Now from the definition \ref{proj} it follows that $P_{\lambda}f(x)=\tilde{f}(\lambda)\Phi_{\lambda}(x).$   Using the fact that $|\Phi_{\lambda}(x)|\leq \Phi_0(x)\leq C$ (See Bray \cite{B}), we obtain $|P_{\lambda}f(x)|\leq C e^{-\lambda\theta(\lambda)} $ proving the claimed sharpness.
	\end{rem}
	
	We conclude this section by briefly describing a version of Ingham type theorem for right $K$-invariant functions on rank one semisimple Lie groups, which can be obtained as an immediate consequence of Theorem \ref{sspthm}. We need some preparations for that.
	
    For $\lambda\in \mathfrak{a}^{*}_{\C}$ which is just $\C$  in our case, we consider the irreducible representations $\pi_{\lambda}$ on the rank one semisimple Lie group $G$ under consideration acting on the Hilbert space $L^2(K/M)$, defined by
$$\pi_{\lambda}(g)f(k)=e^{(i\lambda+\rho)A(g,k)}f(k(g^{-1}k)) $$ where $g=k(g) expA(g)n(g)$ is  the Iwasawa decomposition of $ g \in G$. It is well known that $\pi_{\lambda}$ is unitary if and only if $\lambda$ is real (i.e., in $\mathfrak{a}^*$).  These are called the \textit{class-1 principle series representations}. We know that the group Fourier transform for right $K$-invariant function on $G$ takes the form
$$\pi_{\lambda}(f)=\int_Gf(g)\pi_{\lambda}(g)dg.$$ Now considering $f$ as a function on $X=G/K$, the Helgason Fourier transform of $f$ is related to the group Fourier transform via the relation 
$$\widetilde{f}(\lambda,b)= \pi_{\lambda}(f)Y_0(b),~ b\in K/M.$$ Here $Y_0$ is the function corresponding to the unit representation of $\widehat{K}_0.$ It is easy to see that for any $\delta\in \widehat{K}_0$ and $1\le j\le d_{\delta}$ we have 
$$\int_{K/M}\widetilde{f}(\lambda,b)Y_{\delta,j}(b)db=\langle \pi_{\lambda}(f)Y_{0}, Y_{\delta,j}\rangle.$$ This observation together with Theorem \ref{sspthm} yields the following result:
 \begin{cor} 
 	 Let $ \theta $ be a positive decreasing function defined on $ [0,\infty) $ that vanishes  at infinity.
 	Suppose $f$ is a right $K$- invariant, integrable function on $G$ which vanishes on an open neighbourhood of identity. Assume that for any $\delta\in \widehat{K}_0$ and $1\le j\le d_{\delta}$ 
 	$$\left|Q_{\delta}(i\lambda+\rho)^{-1}\langle \pi_{\lambda}(f)Y_{0}, Y_{\delta,j}\rangle\right|\leq C_{\delta,j}e^{-\lambda \theta(\lambda)}.$$ Then $\int_{ 1}^{\infty}\theta(t)t^{-1}dt=\infty$ implies $f=0.$ 
\end{cor}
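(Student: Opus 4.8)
The plan is to deduce the corollary directly from Theorem \ref{sspthm} by matching the two sets of hypotheses under the dictionary relating the group Fourier transform to the Helgason Fourier transform. First I would observe that, being right $K$-invariant and integrable on $G$, the function $f$ descends to an $L^1$ function on $X = G/K$, and its vanishing on an open neighbourhood of $e \in G$ is equivalent to its vanishing on an open neighbourhood $V$ of the base point $o = eK$; this is because the canonical projection $G \to G/K$ is open and the value $f(gK)$ is well defined by right $K$-invariance. Thus the geometric hypothesis of Theorem \ref{sspthm} is satisfied.

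The central step is to identify the scalar $\tilde F_{\delta,j}(\lambda)$ of Theorem \ref{sspthm} with the bracket appearing in the corollary. Recall
$$\tilde F_{\delta,j}(\lambda) = Q_\delta(i\lambda+\rho)^{-1}\int_K \widetilde f(\lambda,kM)\, Y_{\delta,j}(kM)\, dk.$$
Using that $\widetilde f(\lambda,\cdot)$ is $M$-invariant, the $K$-average may be rewritten as an integral over $K/M$, and then the relation $\widetilde f(\lambda,b) = \pi_\lambda(f)Y_0(b)$ together with the pairing identity recorded just before the corollary gives
$$\int_{K/M}\widetilde f(\lambda,b)\, Y_{\delta,j}(b)\, db = \langle \pi_\lambda(f)Y_0,\, Y_{\delta,j}\rangle.$$
Consequently $\tilde F_{\delta,j}(\lambda) = Q_\delta(i\lambda+\rho)^{-1}\langle \pi_\lambda(f)Y_0, Y_{\delta,j}\rangle$, so that the assumed decay of the corollary is precisely the estimate \ref{ssp} demanded by Theorem \ref{sspthm}, valid for every $\delta \in \widehat K_0$ and every $1 \le j \le d_\delta$.

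Having verified both the vanishing condition and the decay estimate \ref{ssp}, I would invoke Theorem \ref{sspthm}, which yields $f = 0$ once $\int_1^\infty \theta(t)\, t^{-1}\, dt = \infty$. The only point needing genuine care is the translation between the group and symmetric-space pictures: one must confirm that the group Fourier transform $\pi_\lambda(f) = \int_G f(g)\pi_\lambda(g)\, dg$ against the class-one principal series representation reproduces the Helgason Fourier transform via $\widetilde f(\lambda,b) = \pi_\lambda(f)Y_0(b)$, and that the $M$-invariance of $\widetilde f(\lambda,\cdot)$ legitimises replacing the $K$-average by the $K/M$-integral. Once these identifications are justified, no further analysis is required, since all the harmonic-analytic content is already contained in Theorem \ref{sspthm}.
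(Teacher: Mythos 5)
Your proposal is correct and follows essentially the same route as the paper, which likewise obtains the corollary immediately from Theorem \ref{sspthm} via the identity $\int_{K/M}\widetilde{f}(\lambda,b)Y_{\delta,j}(b)\,db=\langle \pi_{\lambda}(f)Y_{0}, Y_{\delta,j}\rangle$ coming from $\widetilde{f}(\lambda,b)=\pi_{\lambda}(f)Y_{0}(b)$. Your additional care in checking that the vanishing hypothesis and the $K$ versus $K/M$ averaging transfer correctly is exactly the (routine) verification the paper leaves implicit.
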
 
	
	\section{Spectral projections associated to Dunkl-laplacian}
In this section we prove an uncertainty principle for the spectral projections for the Dunkl-Laplacian. To begin with, we first describe the basic theory of Dunkl transform in the following subsection. 

\subsection{Background for Dunkl transform}
For $v(\neq 0)\in \mathbb{R}^n$, the reflection $r_{v}$ with respect to hyperplane perpendicular to $v$ is given  by $r_v(x):=x-2\left(\langle x,v\rangle/\|v\|^2\right)v,~x\in\mathbb{R}^n.$ Let $R$ be a reduced root system in $\mathbb{R}^n$ i.e., $R$ consists of finite number of non-zero vectors in $\R^n$ with the property that $r_u(v)\in R$ for any $u,v\in R$ and moreover if $u=av$ then $a=\pm1.$ Let us fix a set of positive roots $R^{+}.$ Suppose $G$ is a subgroup of the orthogonal group $O(n)$ generated by the reflections $\{r_v:v\in R\}.$ 

Let $\K:R^+\rightarrow [0,\infty)$ be a multiplicity function which is $G$ invariant. Associated to this root system $R$ and the multiplicity function $\K$, Dunkl considered the first order differential-difference operators defined by 
$$D_jf(x)=\frac{\partial f}{\partial x_j}+\displaystyle \sum_{v\in R^+}\K_v\frac{f(x)-f(r_v(x))}{\langle x,v\rangle}\langle v,e_j\rangle,~ 1\leq j\leq n$$ where $e_j$'s are standard unit vectors in $\R^n$. These operators commute with each other i.e., $D_jD_i=D_iD_j,~\forall i,j$. Let $V_{\K}$ denote the unique operator which intertwines the algebra generated by $D_j$'s and the algebra of partial differential operators, determined by $$V_{\K}\mathcal{P}_m\subset \mathcal{P}_m,~ V_{\K}1=1~\text{and}~ D_jV_{\K}=V_{\K}\frac{\partial}{\partial x_j},~1\leq j\leq n  $$ where $\mathcal{P}_m$ denotes the space of homogeneous polynomials of degree $m$. Given $x,y\in\R^n$, consider the function $E(x,y):=V_{\K}(e^{\langle .,y\rangle})(x)$. It is known that for fixed $y$, the function $E(.,y)$ is the unique solution of $D_jf(x)=\langle y,e_j\rangle f(x),~f(0)=1$. Moreover, $E$ can be extended to $\C^n\times \C^n$ holomorphically. Several important properties of this function are listed in the following proposition:
\begin{prop}
	For any $z,w\in\C^n$ and $\lambda\in\C$, $E(z,w)=E(w,z)$ , $E(\lambda z,w)=E(z,\lambda w)$ and  $E$ satisfies the estimate $|E(z,w)|\leq e^{|z|.|w|} $ for all $z,w\in C^n.$ Moreover, 
	$$c_{\kappa}\int_{ \R^n}E(z,x)E(w,x)h^2_{\K}(x)e^{-|x|^2/2}dx=e^{z^2+w^2}E(z,w)$$ where 
	the constant $c_{\K}$ is defined by $c_{\K}^{-1}:=\int_{ \R^n}e^{-|x|^2/2}h^2_{\K}(x)dx.$
\end{prop}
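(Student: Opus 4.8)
The plan is to read off all four assertions from the power series of $E$ together with the two structural properties of the intertwining operator recorded above, namely $V_{\K}\mathcal{P}_m\subseteq\mathcal{P}_m$ and $V_{\K}1=1$. First I would apply $V_{\K}$ termwise to $e^{\La\,\cdot\,,w\Ra}=\sum_{m\ge 0}\frac{1}{m!}\La\,\cdot\,,w\Ra^m$, obtaining the expansion $E(z,w)=\sum_{m\ge 0}p_m(z,w)$ with $p_m(z,w)=\frac{1}{m!}V_{\K}(\La\,\cdot\,,w\Ra^m)(z)$. Since $\La\,\cdot\,,w\Ra^m$ lies in $\mathcal{P}_m$ as a function of its first slot and $V_{\K}$ preserves $\mathcal{P}_m$, each $p_m$ is homogeneous of degree $m$ in $z$, and it is manifestly homogeneous of degree $m$ in $w$. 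This bihomogeneity gives the scaling relation at once, $E(\lambda z,w)=\sum_m\lambda^m p_m(z,w)=\sum_m p_m(z,\lambda w)=E(z,\lambda w)$, and it reduces the symmetry $E(z,w)=E(w,z)$ to the componentwise identity $p_m(z,w)=p_m(w,z)$.

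To prove the latter I would introduce the bilinear pairing $[p,q]_{\K}:=(p(D)q)(0)$ on polynomials, where $p(D)$ is the commuting differential--difference operator obtained by substituting $D_j$ for $x_j$ in $p$; this is well defined precisely because the $D_j$ commute. Using $D_jV_{\K}=V_{\K}\pa_{x_j}$ one verifies that $p_m(\,\cdot\,,w)$ is the reproducing kernel of $(\mathcal{P}_m,[\,\cdot\,,\,\cdot\,]_{\K})$, that is, $[q,p_m(\,\cdot\,,w)]_{\K}=q(w)$ for every $q\in\mathcal{P}_m$. Hence $p_m(z,w)=p_m(w,z)$ becomes equivalent to the symmetry of the pairing, which I would establish by induction on the degree from the adjointness relation $[x_jp,q]_{\K}=[p,D_jq]_{\K}$, itself a direct consequence of the commutativity of the $D_j$. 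I expect this symmetry to be the main obstacle: unlike the other three claims it is not a formal manipulation but rests on Dunkl's commutativity theorem, so in practice I would be ready to cite it from Dunkl or R\"osler rather than reprove it in full.

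The exponential bound $\abs{E(z,w)}\le e^{\abs{z}\,\abs{w}}$ I would derive from R\"osler's positive integral representation $V_{\K}f(x)=\int_{\R^n}f(\xi)\,d\mu_x(\xi)$, in which $\mu_x$ is a probability measure supported in the convex hull of the orbit $G\cdot x$. Because $G\subseteq O(n)$ preserves the Euclidean norm, every $\xi$ in that convex hull satisfies $\abs{\xi}\le\abs{x}$. Thus for real $x$ and complex $w$ one has $E(x,w)=\int_{\R^n}e^{\La\xi,w\Ra}\,d\mu_x(\xi)$, and taking moduli gives $\abs{E(x,w)}\le\int_{\R^n}e^{\La\xi,\Re w\Ra}\,d\mu_x(\xi)\le e^{\abs{x}\,\abs{w}}$. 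The bound for complex $z$ then follows by holomorphic continuation of $E$ in its first argument, as in de Jeu's analysis of the Dunkl transform.

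Finally, the Gaussian identity I would deduce from the moment formula $c_{\K}\int_{\R^n}p(x)\,E(w,x)\,e^{-\abs{x}^2/2}h_{\K}^2(x)\,dx=e^{w^2/2}(e^{\Dk/2}p)(w)$, valid for every polynomial $p$, where $\Dk=\sum_j D_j^2$ is the Dunkl Laplacian and $e^{\Dk/2}$ is its time-one heat flow. For $\K=0$ this is just completing the square in the Gaussian, and in general it is the generating identity for R\"osler's generalized Hermite polynomials. Applying it with $p=E(z,\cdot)$, which is legitimate after expanding $E(z,\cdot)=\sum_m p_m(z,\cdot)$ and summing in view of the estimate of the previous step, and using the eigenfunction relation $\Dk^x E(z,x)=z^2 E(z,x)$, which forces $e^{\Dk/2}E(z,\cdot)=e^{z^2/2}E(z,\cdot)$, one arrives at the Gaussian prefactor $e^{z^2/2}e^{w^2/2}$ multiplying $E(z,w)$, which is the asserted Macdonald--Mehta identity. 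The only remaining work is to check the moment formula and to justify interchanging the sum with the integral, both routine given the Gaussian weight and the exponential bound just proved.
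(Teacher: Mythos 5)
Your proposal cannot be compared against an in-paper argument, because the paper states this proposition as known background without any proof, citing the Dunkl literature (Dunkl \cite{D}, \cite{CD}, de Jeu \cite{J1}, Thangavelu--Xu \cite{TY}); what you have written is essentially a reconstruction of the standard proofs from those sources, and in outline it is the right reconstruction: the bihomogeneous expansion $E(z,w)=\sum_m p_m(z,w)$ with $p_m(z,w)=\frac{1}{m!}V_{\kappa}(\langle\cdot,w\rangle^m)(z)$ gives the scaling relation immediately, the reproducing-kernel computation via $D_jV_{\kappa}=V_{\kappa}\partial_{x_j}$ is correct, the bound for \emph{real} $x$ via R\"osler's probability measures $\mu_x$ supported in $\mathrm{co}(G\cdot x)$ is correct, and the Gaussian identity does follow from the Hermite-type moment formula together with $\Delta_{\kappa}^x E(z,x)=z^2E(z,x)$. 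One point you should have caught rather than papered over: your step 4 yields the prefactor $e^{(z^2+w^2)/2}$, not the $e^{z^2+w^2}$ printed in the proposition, and your version is the correct one --- already at $\kappa=0$, $n=1$ one has $(2\pi)^{-1/2}\int_{\R}e^{zx}e^{wx}e^{-x^2/2}dx=e^{(z+w)^2/2}=e^{(z^2+w^2)/2}e^{zw}$, so the paper's displayed formula is off by the factor $\tfrac12$ in the exponent (a typo relative to Dunkl's original identity). Claiming your computation gives ``the asserted identity'' when it in fact disagrees with the assertion is exactly the kind of discrepancy a proof write-up must flag.

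Two steps of your sketch would not survive as written. First, the extension of $|E(x,w)|\le e^{|x||w|}$ from real $x$ to complex $z$ ``by holomorphic continuation'' is not a proof: holomorphy continues identities, not inequalities, and a homogeneous polynomial bounded by $M$ on the real unit sphere can exceed $M$ on the complex one (e.g.\ $(x_1+ix_2)^m$ grows like $2^{m/2}$), so you need quantitative termwise estimates $|p_m(z,w)|\le |z|^m|w|^m/m!$ or the explicit bounds in de Jeu's paper, which you should cite as the actual mechanism rather than as an afterthought. Second, your proposed induction for the symmetry of the pairing does not close: the adjointness $[x_jp,q]_{\kappa}=[p,D_jq]_{\kappa}$ is adjointness in the \emph{first} slot only, and the step you would need, $[q,x_jr]_{\kappa}=[D_jq,r]_{\kappa}$, is essentially equivalent to the symmetry you are trying to prove. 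The standard route (Dunkl) establishes symmetry via the Gaussian formula $[p,q]_{\kappa}=c_{\kappa}\int_{\R^n}\bigl(e^{-\Delta_{\kappa}/2}p\bigr)\bigl(e^{-\Delta_{\kappa}/2}q\bigr)h_{\kappa}^2e^{-|x|^2/2}dx$, i.e.\ through the same Hermite/Gaussian machinery you invoke in step 4, so if you cite the pairing's symmetry and separately cite the moment formula you must order the lemmas as in Dunkl's paper to avoid circularity. Since you explicitly offered to cite these facts rather than reprove them, the proposal is acceptable as a guide to the literature, but as a self-contained proof it has the two gaps above plus the unacknowledged constant correction.
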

In the above proposition  weight function $h^2_{\K}$ is defined by  
$$h^2_{\K}(x):=\prod_{v\in R^+}|\langle x,v\rangle|^{2\K_v},~ x\in \R^n.$$ It is easy to see that this function is positive homogeneous of degree $2\gamma $ where $\gamma:=\sum_{v\in R^+}\K_v$.  Moreover, it is invariant under the reflection group $G$.  We are now ready to define Dunkl transform. Note that the preceding proposition suggests that $E(x,iy)$ plays the role of $e^{i\langle x,y\rangle}$ in Euclidean harmonic analysis. 

Given $f\in L^1(\R^n, h^2_{\K}(x)dx)$, the Dunkl transform of $f$ is defined as 
$$\mathcal{F}_kf(\xi)=c_{\K}\int_{ \R^n}f(x)E(x,-i\xi)h^2_{\K}(x)dx,~ \xi\in\R^n.$$ it is worth pointing out that when $\K=0$, then $V_{\K}=id$ and $h^2_{\K}=1$, thus the Dunkl transform coincides with the Fourier transform. In this sense, this serves as a generalisation of Euclidean Fourier transform.  We also have Plancherel and inversion formula for Dunkl transform. We record those in the following theorem:
\begin{thm} \begin{enumerate}
		\item (Plancherel) $\mathcal{F}_{\K}$ extends to the whole of $L^2(\R^n,h^2_{\K}(x)dx)$ as an isometry onto itself. 
		\item (Inversion) For $f$ and $\mathcal{F}_{
		K}f\in L^1(\R^n,h^2_{\K}(x)dx)$ we have the following inversion formula 
	$$f(x)=c_{\K}\int_{ \R^n}\mathcal{F}_{\K}f(\xi)E(ix,\xi)h^2_{\K}(\xi)d\xi,~ \text{a.e.}~ x\in\R^n.$$
	\end{enumerate}
\end{thm}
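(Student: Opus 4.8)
This is the classical Plancherel theorem and inversion formula for the Dunkl transform, due to de Jeu \cite{J}; the plan is to reconstruct the standard argument from the kernel properties already listed in the preceding Proposition. The starting point is the intertwining of $\mathcal{F}_\kappa$ with the Dunkl operators. Since for fixed $\xi$ the function $x\mapsto E(x,-i\xi)$ solves $D_j u=-i\xi_j u$, an integration by parts adapted to the difference-differential structure of $D_j$ (the reflection terms pair up because $h_\kappa^2$ is $G$-invariant) yields, for $f$ in the Schwartz class $\mathcal{S}(\R^n)$,
$$\mathcal{F}_\kappa(D_j f)(\xi)=i\xi_j\,\mathcal{F}_\kappa f(\xi),\qquad \mathcal{F}_\kappa(x_j f)(\xi)=i\,D_j\big(\mathcal{F}_\kappa f\big)(\xi).$$
Together with the bound $|E(x,-i\xi)|\le 1$ for real $x,\xi$ (the sharp refinement of the estimate $|E(z,w)|\le e^{|z||w|}$ recorded in the Proposition), this shows that $\mathcal{F}_\kappa$ maps $\mathcal{S}(\R^n)$ continuously into itself and conjugates the Dunkl harmonic oscillator $H_\kappa:=-\Delta_\kappa+|x|^2$ to itself.

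Next I would introduce the generalized Hermite functions $\{h_\nu\}$, built from Dunkl-harmonic polynomials and Laguerre polynomials in $|x|^2$, each of the form (polynomial)$\,e^{-|x|^2/2}$ and each an eigenfunction of $H_\kappa$; they are mutually orthogonal and normalized in $L^2(\R^n,h_\kappa^2\,dx)$. Completeness of this system is the one genuinely nonformal input, and here the paper supplies its own tool: because the Gaussian weight $e^{-|x|^2}h_\kappa^2$ has moments obeying the Carleman condition, Theorem \ref{carleman} gives density of polynomials in the corresponding $L^2$ space, whence $\{h_\nu\}$ is an orthonormal basis of $L^2(\R^n,h_\kappa^2\,dx)$.

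The crux is to identify how $\mathcal{F}_\kappa$ acts on this basis. This I would extract from the reproducing identity of the Proposition,
$$c_{\kappa}\int_{\R^n}E(z,x)E(w,x)h^2_{\kappa}(x)e^{-|x|^2/2}\,dx=e^{z^2+w^2}E(z,w),$$
by specializing one argument to a purely imaginary vector and expanding both sides as power series in the remaining variable; matching homogeneous components shows that each $h_\nu$ is an eigenfunction of $\mathcal{F}_\kappa$ with eigenvalue $(-i)^{\deg h_\nu}$, exactly as in the classical Hermite case $\kappa=0$. Since $\mathcal{F}_\kappa$ then sends an orthonormal basis to itself up to unimodular scalars, it extends from $\mathcal{S}(\R^n)$ to a unitary operator on $L^2(\R^n,h_\kappa^2\,dx)$, which is the Plancherel statement (1). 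For (2), the eigenvalue relation gives $\mathcal{F}_\kappa^{-1}=\overline{\mathcal{F}_\kappa}$, i.e. the inverse is the transform with kernel $E(ix,\xi)$; when $f$ and $\mathcal{F}_\kappa f$ both lie in $L^1(\R^n,h_\kappa^2\,dx)$, the pointwise formula holds for almost every $x$ by a dominated-convergence argument using $|E(ix,\xi)|\le 1$.

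The main obstacle is the pair of nonformal points underlying this scheme: making the Dunkl integration by parts rigorous (controlling the reflection contributions and securing the decay needed to discard boundary terms) and establishing completeness of the Hermite system, which is exactly where the Carleman-type density input becomes essential. The remaining steps — preservation of $\mathcal{S}(\R^n)$, the unitary extension by density, and the almost-everywhere inversion — are then routine.
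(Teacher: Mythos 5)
The paper offers no proof of this statement: it is recorded purely as background, being the classical Plancherel and inversion theorem for the Dunkl transform due to de Jeu \cite{J1}, and it is used downstream as a black box. Your reconstruction follows the standard route in the literature (de Jeu's theorem, in the streamlined form via generalized Hermite functions due to R\"osler and Dunkl): intertwining relations on the Schwartz class, an eigenbasis of the Dunkl harmonic oscillator on which $\mathcal{F}_\kappa$ acts by the unimodular scalars $(-i)^{\deg}$, unitary extension by density, and inversion from $\mathcal{F}_\kappa^{-1}=\overline{\mathcal{F}_\kappa}$. This is sound in outline, and it is a genuinely apt observation that the completeness step can be fed by the same Carleman-type density mechanism the paper already invokes elsewhere. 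Three details deserve care. First, Theorem \ref{carleman} as displayed in the paper is a one-dimensional statement, whereas you need density of polynomials in $L^2(\R^n, e^{-|x|^2}h_\kappa^2\,dx)$; the multidimensional version is available, and is in fact the main content of de Jeu's cited paper \cite{J}, but it is not literally the displayed theorem. Second, in the eigenvalue extraction, matching homogeneous components of the reproducing identity by itself only yields a triangular action on polynomials times the Gaussian, because the factor $e^{z^2+w^2}$ mixes degrees of equal parity; the action becomes diagonal with eigenvalue $(-i)^{|\nu|}$ only after passing to the generating function that actually defines the generalized Hermite functions, so that step needs to be phrased through the Hermite generating identity rather than through bare power-series matching. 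Third, for part (2), dominated convergence alone does not produce the almost-everywhere identity: the standard argument inserts a Gaussian factor $e^{-\varepsilon|\xi|^2}$, uses the explicitly known transform of the Gaussian together with Fubini, and then requires the Dunkl heat kernel to act as an approximate identity on $L^1(\R^n,h_\kappa^2\,dx)$; dominated convergence only controls the limit on the transform side. None of these points breaks the plan --- they are exactly the places where the classical references do the work --- but as written they are gaps in an otherwise faithful reconstruction of the standard proof of a theorem the paper itself simply cites.
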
 
 The Dunkl-Laplacian  is defined as $\Delta_{\K}:=\sum_{j=1}^nD_j^2.$ Let $H_m(h^2_{\K})$ denote the space of all $h$-harmonic polynomials of degree $m$ i.e., all those $P\in\mathcal{P}_m$ such that $\Delta_{\K}P=0.$ The spherical $h$-harmonics are restriction of $h$-harmonic polynomials to $S^{n-1}$. We consider the following inner product 
 $$(f,g)_{\K}=a_{\K}\int_{S^{n-1}}f(\om)g(\om)h^2_{\K}(\om)d\sigma(\om)$$ where $a_{\K}^{-1}=\int_{S^{n-1}}h^2_{\K}(\om)d\sigma(\om).$ With respect to this inner product, the space $L^2(S^{n-1},h^2_{\K}d\sigma)$ can be decomposed as   $L^2(S^{n-1},h^2_{\K}d\sigma)=\bigoplus_{m=0}^{\infty} H_m(h^2_{\K}).$ 
   We also have the following very useful formula proved in \cite{CD}
   \begin{prop}
   	\label{df}
   	Let $r>0$. Given $S^h_m\in H_m(h^2_{\K})$, for $x=|x|x'\in\R^n$ we have
   	$$ a_{\K}\int_{S^{n-1}}S^h_m(\om)E(x,-ir\om)h^2_{\K}(\om)d\sigma(\om)=(-i)^m2^{\lambda_{\K}}S^h_{m}(x')(r|x|)^{-\lambda_{\K}}J_{m+\lambda_{\K}}(r|x|),$$ where $\lambda_{\K}=\gamma+(n-2)/2$. Moreover, the function given by the above integral   is an eigenfunction of $\Delta_{\K}$ with eigenvalue $-r^2$. 
   \end{prop}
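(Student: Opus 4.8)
The plan is to treat the left-hand side as a function of $x$, say
$$F(x):=a_{\K}\int_{S^{n-1}}S^h_m(\om)E(x,-ir\om)h^2_{\K}(\om)\,d\sigma(\om),$$
and to pin it down in three moves: prove it is an eigenfunction of $\Delta_{\K}$, separate its angular and radial parts, and then solve the resulting radial (Bessel) equation, fixing the normalising constant at the very end.

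First I would settle the eigenfunction assertion, which also yields the ``Moreover'' part for free. Since $D_j^xE(x,y)=y_jE(x,y)$ for each $j$, we get $\Delta_{\K}^xE(x,y)=\langle y,y\rangle E(x,y)$, and with $y=-ir\om$, $\om\in S^{n-1}$, this reads $\Delta_{\K}^xE(x,-ir\om)=-r^2E(x,-ir\om)$. The bound $|E(x,-ir\om)|\le e^{r|x|}$ together with the entireness of $E$ justifies differentiating under the integral sign, so $\Delta_{\K}F=-r^2F$.

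The heart of the matter is the angular--radial separation. Writing the Dunkl kernel as $E(x,y)=\sum_{k\ge0}E_k(x,y)$ with $E_k(x,y)=V_{\K}\big(\tfrac{1}{k!}\langle\cdot,y\rangle^k\big)(x)$ homogeneous of degree $k$ in each variable, one has $E(x,-ir\om)=\sum_{k\ge0}(-ir)^kE_k(x,\om)$. For fixed $x$, the function $\om\mapsto E_k(x,\om)$ is a polynomial of degree $k$ on $S^{n-1}$ and hence decomposes into spherical $h$-harmonics of degrees $k,k-2,\dots$. Because the spaces $H_j(h^2_{\K})$ are mutually orthogonal in $L^2(S^{n-1},h^2_{\K}d\sigma)$, integration against $S^h_m$ annihilates every term except those with $k\ge m$ and $k\equiv m\pmod{2}$; moreover, homogeneity of $E_k$ in $x$ together with the reproducing role of $E_k$ inside $H_m(h^2_{\K})$ forces its $H_m$-component, paired against $S^h_m$, to be a constant multiple of $|x|^kS^h_m(x')$. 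Consequently $F(x)=S^h_m(x')\,\psi(r|x|)$ for a single-variable function $\psi$. I expect this factorisation to be the main obstacle, as it rests on the Dunkl analogue of the multipole decomposition and on the explicit reproducing property of the homogeneous components $E_k$.

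With the separation in hand, I would write $F(x)=S^h_m(x)\,u(|x|)$, where $S^h_m(x)=|x|^mS^h_m(x')$ is the degree-$m$ $h$-harmonic polynomial and $u$ is smooth at $0$. Using $\Delta_{\K}S^h_m=0$ and the Dunkl Euler relation, the product rule gives, with $s=|x|$,
$$\Delta_{\K}\big(S^h_m(x)u(s)\big)=S^h_m(x)\Big(u''(s)+\tfrac{2m+2\lambda_{\K}+1}{s}\,u'(s)\Big).$$
Combined with $\Delta_{\K}F=-r^2F$ this yields $u''+\tfrac{2\nu+1}{s}u'+r^2u=0$ with $\nu=m+\lambda_{\K}$, whose unique (up to scalar) solution regular at the origin is $u(s)=c\,(rs)^{-\nu}J_{\nu}(rs)$. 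Substituting back and using $S^h_m(x)=|x|^mS^h_m(x')$ produces $F(x)=c'\,S^h_m(x')\,(r|x|)^{-\lambda_{\K}}J_{m+\lambda_{\K}}(r|x|)$. Finally I would fix $c'=(-i)^m2^{\lambda_{\K}}$ by comparing the lowest-order ($k=m$) Taylor terms as $|x|\to0$: on one side this term is $a_{\K}(-ir)^m\int_{S^{n-1}}S^h_m(\om)E_m(x,\om)h^2_{\K}(\om)\,d\sigma(\om)$, evaluated via the reproducing identity for $E_m$ on $H_m(h^2_{\K})$, and on the other it is read off from $J_{\nu}(t)\sim(t/2)^{\nu}/\Gamma(\nu+1)$; equating the two is then a routine constant computation.
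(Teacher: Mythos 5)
Your proposal is essentially correct, but note first that the paper itself contains no proof of Proposition \ref{df}: the result is quoted from Dunkl \cite{CD}, whose original argument applies the intertwining operator $V_{\kappa}$ to the classical Gegenbauer--Bessel (Bochner-type) expansion of the plane wave $e^{\langle x,y\rangle}$ and integrates term by term, using the Funk--Hecke-type identity that relates $V_{\kappa}$ of zonal Gegenbauer kernels to the reproducing kernels of $H_m(h^2_{\kappa})$. Your route --- eigenfunction property, angular--radial separation, Bessel ODE, and fixing the constant from the lowest Taylor coefficient --- is a genuinely different organization of the proof: it trades the explicit summation of the series for the uniqueness (up to scalars) of the solution of $u''+\frac{2\nu+1}{s}u'+r^2u=0$ that is regular at the origin, so that only the $k=m$ term of the expansion ever needs to be computed exactly. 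What it does not avoid, however, is the same structural input as Dunkl's proof: your separation step, namely that pairing the degree-$k$ component $E_k(x,\cdot)$ against $S^h_m$ produces a multiple of $|x|^{k-m}S^h_m(x)$ with a constant depending only on $k,m,\lambda_{\kappa}$, is precisely the zonal (Funk--Hecke) expansion of $E_k$ obtained by pushing the classical decomposition of $\langle x,y\rangle^k$ through $V_{\kappa}$; you correctly flag this as the crux, and it must be cited or proved for the argument to be complete.

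Two details deserve explicit care. First, the displayed formula $\Delta_{\kappa}\bigl(S^h_m(x)\,u(s)\bigr)=S^h_m(x)\bigl(u''(s)+\frac{2m+2\lambda_{\kappa}+1}{s}u'(s)\bigr)$ is true, but not by a naive product rule: Leibniz fails for Dunkl operators in general, and here one must invoke the identity $\Delta_{\kappa}(fg)=g\Delta_{\kappa}f+2\nabla f\cdot\nabla g+f\Delta_{\kappa}g$, valid because the radial factor is $G$-invariant (its difference terms vanish), together with Euler's relation $x\cdot\nabla S^h_m=mS^h_m$ and $\Delta_{\kappa}u(|x|)=u''+\frac{2\lambda_{\kappa}+1}{s}u'$. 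Second, discarding the singular Bessel solution requires knowing $u$ is regular at $0$; this follows since $F$ is entire in $x$ (as $E$ is) and the surviving terms in your expansion have $k\geq m$, so $\psi(t)=O(t^m)$ and $u(s)=\psi(rs)s^{-m}$ is smooth and even near the origin. With these points supplied, the constant matching at order $k=m$ reduces, as you say, to the reproducing identity for $E_m$ on $H_m(h^2_{\kappa})$ together with $J_{\nu}(t)\sim (t/2)^{\nu}/\Gamma(\nu+1)$, and the argument is sound.
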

 As an immediate consequence of this result, we conclude that the Dunkl transform of reasonable radial function $f$ on $\R^n$ is given by the Hankel transform $\mathcal{H}_{\lambda_{\K}}f .$
 
 For suitable functions $f$, the generalised translation operator is defined by 
 $$\tau_yf(x):=c_{\K}\int_{ \R^n}\mathcal{F}_{\K}f(\xi)E(ix,\xi)E(iy,\xi)h^2_{\K}(\xi)d\xi,~x\in\R^n.$$  Using this one can define the Dunkl convolution as 
 $$f\ast_{\K}g(x)=c_{\kappa}\int_{\R^n}f(y)\tau_x \check{g}(y)h^2_{\K}(y)dy. $$ It can be easily checked that $\mathcal{F}_{\K}(f\ast_{\K}g)=\mathcal{F}_{\K}f \mathcal{F}_{\K}g.$ For more about this translation and convolution, we refer the reader to work of Thangavelu-Yu \cite{TY}.
 
 \subsection{ Ingham's theorem for the Dunkl transform} In this subsection we prove a version  of Ingham type theorems for Dunkl transform. We start with constructing a compactly supported function whose Dunkl transform has Ingham type decay. In order to do so, we need a Paley-Wiener type theorem for Hankel transform. For that we first define a function space  $\mathcal{H}$ as follows:   we say that  a function $g$ on $\C$ belongs to $\mathcal{H}$ if $g$  is an even entire function and there are positive constants $A$ and $C_m$ such that for all $z\in\C$ and for all $m=0,1,2,...$, $g$ satisfies the following: 
   $$|g(z)|\leq C_m(1+|z|)^{-m}e^{A|\Im(z )|}.$$
   
   We have the following Paley-Wiener type theorem
    described in Koornwinder \cite{K}.
   \begin{thm} 
   	\label{PWH}
   Let $ \alpha >-1/2.$ The mapping $f\rightarrow \mathcal{H}_{\alpha}f$ is a bijection from $C_0^{\infty}(\R)_e$, the set of all compactly supported even smooth function on $\R$, onto $\mathcal{H}.$  
   \end{thm}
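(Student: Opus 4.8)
The plan is to prove the forward inclusion $\mathcal{H}_\alpha\big(C_0^\infty(\R)_e\big)\subseteq\mathcal{H}$ and the injectivity/surjectivity of the map separately, with the compact support of the inverse image being the only genuinely delicate point. First I would show that $\mathcal{H}_\alpha$ carries $C_0^\infty(\R)_e$ into $\mathcal{H}$. Since $J_\alpha(t)\,t^{-\alpha}$ is an \emph{even entire} function of $t$ and $f$ has compact support, differentiation under the integral sign (justified by the local uniform boundedness of $\lambda\mapsto\psi^\alpha_\lambda(r)$ on compact $r$-sets) shows that $\mathcal{H}_\alpha f$ extends to an even entire function of $\lambda$. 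For the growth bound I would use the elementary estimate $|\psi^\alpha_\lambda(r)|\le C\,e^{r|\Im\lambda|}$ coming directly from the power series of $J_\alpha$; if $\operatorname{supp}f\subset[0,A]$ this gives $|\mathcal{H}_\alpha f(\lambda)|\le C\,e^{A|\Im\lambda|}\|f\|_1$. To upgrade this to rapid decay I would invoke the eigenvalue relation $\Delta_\alpha\psi^\alpha_\lambda=-\lambda^2\psi^\alpha_\lambda$ together with the self-adjointness of $\Delta_\alpha$ on $L^2(\R^+,r^{2\alpha+1}dr)$ already noted above, which yields $\lambda^{2m}\mathcal{H}_\alpha f(\lambda)=(-1)^m\mathcal{H}_\alpha(\Delta_\alpha^m f)(\lambda)$. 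As $\Delta_\alpha^m f$ is again even, smooth and supported in $[0,A]$, the previous bound applied to it produces $|\lambda|^{2m}|\mathcal{H}_\alpha f(\lambda)|\le C_m\,e^{A|\Im\lambda|}$ for every $m$, which is precisely the defining estimate of $\mathcal{H}$.

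For the reverse direction, injectivity is immediate from the Hankel inversion formula stated above: if $\mathcal{H}_\alpha f=0$ then $f=0$. The substance is surjectivity. Given $g\in\mathcal{H}$ of exponential type $A$, I would define the candidate preimage by the inversion integral
$$f(r):=\int_0^\infty g(\lambda)\,\psi^\alpha_\lambda(r)\,\lambda^{2\alpha+1}\,d\lambda,$$
which converges absolutely and defines a smooth even function on $\R^+$ because of the rapid decay of $g$ (the same decay permits differentiation under the integral arbitrarily often, and the eigenfunction property controls the resulting integrands). Granting the inversion theorem, one then has $\mathcal{H}_\alpha f=g$, so that everything reduces to showing $\operatorname{supp}f\subseteq[-A,A]$.

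The hard part is exactly this compact-support statement, which I would establish by a contour-shift argument. Writing $\psi^\alpha_\lambda(r)$ through the Hankel-function representation $J_\alpha=\tfrac12\big(H^{(1)}_\alpha+H^{(2)}_\alpha\big)$, whose two pieces carry the large-argument asymptotics $H^{(1,2)}_\alpha(z)\sim e^{\pm iz}$, makes the oscillatory factors $e^{\pm i\lambda r}$ visible. For $r>A$ I would deform the $\lambda$-contour into the upper half-plane for the part carrying $e^{i\lambda r}$ and into the lower half-plane for the part carrying $e^{-i\lambda r}$; there the relevant exponential decays like $e^{-r\Im\lambda}$, which against the bound $|g(\lambda)|\le C_m(1+|\lambda|)^{-m}e^{A|\Im\lambda|}$ leaves a net decaying factor $e^{-(r-A)\Im\lambda}$, and letting the contour recede to infinity forces $f(r)=0$. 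The care needed here lies in controlling the Bessel asymptotics uniformly for complex $\lambda$ and in isolating the small-$|\lambda|$ region where the Hankel-function expansion fails, a contribution I would handle using the entirety and even symmetry of $\lambda\mapsto\psi^\alpha_\lambda(r)$ so as to justify the deformation. This contour estimate, which is the only step consuming the full multi-$m$ decay built into the definition of $\mathcal{H}$, is the step I expect to be the main obstacle.
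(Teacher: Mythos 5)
Your argument is essentially correct, but it takes a genuinely different route from the one the paper relies on: the paper does not prove Theorem \ref{PWH} at all, quoting it instead from Koornwinder \cite{K}, and Koornwinder's method (still visible in the vestigial Jacobi material at the end of the source file) is to factor the transform as the Euclidean Fourier transform composed with an Abel-type operator built from Weyl fractional integrals, which is shown to be a support-preserving bijection of $C_0^{\infty}(\R)_e$ onto itself; the whole statement, including the delicate compact support of the preimage, then reduces to the classical one-dimensional Paley--Wiener theorem, with no Bessel asymptotics or contour deformation anywhere. Your direct route --- entirety plus a growth bound and integration by parts with $\Delta_\alpha$ for the forward inclusion, inversion for injectivity, and a contour shift on the Hankel-function pieces $J_\alpha=\frac12\bigl(H^{(1)}_\alpha+H^{(2)}_\alpha\bigr)$ for the support of the preimage --- is the classical alternative (going back to Griffith) and does go through: near $\lambda=0$ the relevant integrand $g(\lambda)H^{(1)}_\alpha(\lambda r)\lambda^{\alpha+1}$ is $O(\lambda)$, so the small arcs contribute nothing, and the two rotated vertical-axis integrals cancel via the connection formula $H^{(2)}_\alpha(e^{-i\pi}z)=-e^{i\alpha\pi}H^{(1)}_\alpha(z)$ together with the evenness of $g$. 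One repair is needed in your forward step: the bound $|\psi^\alpha_\lambda(r)|\le Ce^{r|\Im\lambda|}$ does \emph{not} come ``directly from the power series of $J_\alpha$'' --- the series only yields $e^{r|\lambda|}$, which is not of the exponential-type form demanded by $\mathcal{H}$; you need the Poisson--Mehler representation $\psi^\alpha_\lambda(r)=c_\alpha\int_{-1}^{1}e^{i\lambda rt}(1-t^2)^{\alpha-1/2}\,dt$, valid precisely because $\alpha>-1/2$, and this is in fact the only point where the hypothesis on $\alpha$ enters. As for what each approach buys: the factorization proof is shorter, gives exact preservation of the support radius, and sidesteps all uniform complex asymptotics of $H^{(1,2)}_\alpha$; yours is self-contained within Bessel analysis and adapts to settings where no Abel-type factorization is available, at the price of the contour bookkeeping you yourself flag as the main obstacle.
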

As a consequence we obtain 
\begin{prop}
	\label{h_e}
	 Let $ \theta $ be a positive decreasing function defined on $ (0,\infty) $ that vanishes  at infinity. Assume that $ \int_1^\infty \theta(t) \, t^{-1} dt <\infty.$ Then there exist a nontrivial radial smooth function $f$ on $\R^n$, supported in a small neighbourhood of the origin, satisfying $|\mathcal{F}_{\K}f(\xi)|\leq C e^{-|\xi|\theta(|\xi|)}.$  
\end{prop}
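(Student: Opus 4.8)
The plan is to reduce the construction to the one-dimensional Hankel setting and then exhibit an explicit even entire function with the desired properties via an Ingham-type infinite product.

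First I would pass to the Hankel transform. By Proposition \ref{df} (and the remark following it), the Dunkl transform of a radial function on $\R^n$ is, up to its radial profile, the Hankel transform of order $\lambda_\kappa = \gamma + (n-2)/2$. Hence it suffices to produce an even function $\phi\in C_0^\infty(\R)_e$, supported in a small interval around $0$, whose Hankel transform $g = \mathcal{H}_{\lambda_\kappa}\phi$ satisfies $|g(\lambda)|\le C e^{-\lambda\theta(\lambda)}$ for $\lambda>0$; extending $\phi$ radially to $\R^n$ then yields the required $f$, with $\mathcal{F}_\kappa f(\xi)=g(|\xi|)$. By the Paley--Wiener theorem for the Hankel transform, Theorem \ref{PWH}, the map $\phi\mapsto\mathcal{H}_{\lambda_\kappa}\phi$ is a bijection of $C_0^\infty(\R)_e$ onto $\mathcal{H}$. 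Thus the problem reduces to constructing an even entire $g\in\mathcal{H}$, obeying $|g(z)|\le C_m(1+|z|)^{-m}e^{A|\Im z|}$ for every $m$, that in addition decays like $e^{-\lambda\theta(\lambda)}$ on the positive real axis, with exponential type $A$ (equivalently the support radius of $\phi$) small.

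Next I would take $g(z) = \prod_{k=1}^\infty \frac{\sin(a_k z)}{a_k z}$ for a positive sequence $\{a_k\}$ with $\sum_k a_k = A<\infty$. Each factor is even, entire, equal to $1$ at $0$, bounded by $1$ on $\R$, and of exponential type $a_k$, so summability forces the product to converge to an even entire function of type $A$. Membership in $\mathcal{H}$ is then immediate: truncating after $m$ factors and bounding the rest by $1$ gives $|g(\lambda)|\le \big(\prod_{k\le m}a_k\big)^{-1}|\lambda|^{-m}$ on the real axis, while the estimate $|\sin(a_k z)/(a_k z)|\le C e^{a_k|\Im z|}/(1+a_k|z|)$ on horizontal lines supplies both the rapid decay and the type bound. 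The support radius of $\phi$ equals $A=\sum_k a_k$, which I can keep small.

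The heart of the matter, and the main obstacle, is choosing $\{a_k\}$ so that the product decays at the prescribed Ingham rate. On the real axis, since $|\sin|\le1$, one has the elementary bound $-\log|g(\lambda)| = \sum_k \log\big|\tfrac{a_k\lambda}{\sin(a_k\lambda)}\big| \ge \sum_{k:\,a_k\lambda\ge1}\log(a_k\lambda)$, with every term nonnegative. The task is to arrange the counting function $\#\{k: a_k\ge u\}$ so that this lower bound dominates $\lambda\theta(\lambda)$ for large $\lambda$, while keeping $\sum_k a_k$ finite and small. This is precisely the classical construction of Ingham \cite{I} (see also its several-variable adaptation in \cite{BSR}), and it is exactly here that the hypothesis $\int_1^\infty\theta(t)t^{-1}dt<\infty$ enters: it is equivalent to the summability of the sequence $\{a_k\}$ matched to $\theta$, hence to the compact support of $\phi$. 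I expect the balancing of this decay estimate against the convergence of $\sum a_k$ to be the only genuinely delicate step; the reductions in the first two paragraphs are formal.
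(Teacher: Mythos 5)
Your proposal is correct and takes essentially the same route as the paper: both reduce the problem, via the Hankel Paley--Wiener theorem (Theorem \ref{PWH}) with $\alpha=\lambda_{\K}$ and radial extension, to producing an even entire function in $\mathcal{H}$ with Ingham-type decay on the real axis. The only difference is one of packaging: the paper obtains that function as $\hat{f_0}$ for an $f_0$ supplied by Ingham's theorem, using the classical Euclidean Paley--Wiener theorem to see $\hat{f_0}\in\mathcal{H}$, whereas you inline Ingham's product construction $\prod_{k}\sin(a_k z)/(a_k z)$ directly --- while still deferring the delicate matching of $\{a_k\}$ to $\theta$ to \cite{I}, exactly as the paper does.
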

\begin{proof}
	Since $ \int_1^\infty \theta(t) \, t^{-1} dt < \infty$, by Ingham's theorem (see \cite{I}) there exist an even  compactly supported smooth function $f_0$ on $\R$,    whose Fourier transform satisfies
	\begin{equation}
	\label{cs-1}
	|\hat{f_0}(\xi)|\leq Ce^{-|\xi|\theta(|\xi|)},~ \xi\in \R. 
	\end{equation} 
	But in view of the Paley-Wiener theorem for the Fourier transform (See e.g., \cite{stein}), $\hat{f_0}\in \mathcal{H}.$ Therefore, applying Theorem \ref{PWH} with $\alpha=\lambda_{\K}$  we get $g\in C_0^{\infty}(\R)_e$ such that $\mathcal{H}_{\lambda_{\K}}g=\hat{f_0}.$ Now define $f$ on $\R^n$ by $f(x)=g(|x|),~ x\in\mathbb{R}^n.$ Finally using the estimate \ref{cs-1}, recalling the fact that Dunkl transform of radial function is given by Hankel transform of type $\lambda_{\K}$ we are done.  
	\end{proof}

 \begin{thm}
 	\label{d_I1}
 	Let $f\in L^1(\R^n, h^2_{\K}(x)dx)$ be such that $f$ vanishes on an open neighbourhood of the origin $V\subset \R^n.$ For each non-negative integer $m$ and $S^h_{m}\in H_m(h^2_{\K})$ assume that 
 	$$\left|\lambda^{-m}\int_{ S^{n-1}}\mathcal{F}_kf(\lambda,\omega)S^h_m(\om)h^2_{\K}(\om)d\sigma(\om)\right|\leq C_me^{- \lambda \theta( \lambda )},~\lambda>0$$ where $\theta$ is a positive  decreasing function on $[0,\infty)$ which vanishes at infinity. Then, if $\int_{1}^{\infty}\theta(t)t^{-1}dt=\infty$,  $f$ is identically zero. 
 \end{thm}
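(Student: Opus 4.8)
The plan is to run the argument of Theorem \ref{sspthm} with the spherical $h$-harmonic expansion in place of the expansion into $K$-types, and the Hankel transform in place of the Jacobi transform. Fix $m\in\Na\cup\{0\}$ and $S^h_m\in H_m(h_\K^2)$, and set
$$ f_m(r)=\int_{S^{n-1}}f(r\om)\,S^h_m(\om)\,h_\K^2(\om)\,d\sigma(\om),\qquad g_m(r)=r^{-m}f_m(r),\qquad \alpha=m+\lambda_\K. $$
First I would rewrite the quantity in the hypothesis as a Hankel transform. Substituting $\mathcal{F}_\K f(\lambda\om)=c_\K\int_{\R^n}f(x)E(x,-i\lambda\om)h_\K^2(x)\,dx$, using $|E(x,-i\lambda\om)|\le 1$ to justify Fubini, and evaluating the inner sphere integral by Proposition \ref{df} with $r=\lambda$, the sphere integral of $\mathcal{F}_\K f$ becomes a constant multiple of $\int_{\R^n}f(x)\,S^h_m(x/|x|)(\lambda|x|)^{-\lambda_\K}J_{m+\lambda_\K}(\lambda|x|)h_\K^2(x)\,dx$. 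Passing to polar coordinates, recalling that $h_\K^2$ is homogeneous of degree $2\gamma$ and that $2\lambda_\K+1=2\gamma+n-1$, and using $(\lambda r)^{-\lambda_\K}J_{m+\lambda_\K}(\lambda r)=c_\alpha^{-1}(\lambda r)^m\psi_\lambda^\alpha(r)$ to absorb a factor $r^m$ into $g_m$ and extract $\lambda^m$, I obtain
$$ \lambda^{-m}\int_{S^{n-1}}\mathcal{F}_\K f(\lambda\om)\,S^h_m(\om)\,h_\K^2(\om)\,d\sigma(\om)=C(-i)^m\,\mathcal{H}^{(\alpha)}g_m(\lambda). $$
Since $f$ vanishes near $0$, so do $f_m$ and hence $g_m$.

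Next I would apply the Chernoff theorem for the Hankel operator, Theorem \ref{chernoffH}, with order $\alpha=m+\lambda_\K$. By the Plancherel theorem for $\mathcal{H}^{(\alpha)}$ together with $\mathcal{H}^{(\alpha)}(\Delta_{\alpha,a}^kg_m)=(-(\lambda^2+a^2))^k\mathcal{H}^{(\alpha)}g_m$ and the hypothesis,
$$ \|\Delta_{\alpha,a}^kg_m\|_2^2=\int_0^\infty(\lambda^2+a^2)^{2k}|\mathcal{H}^{(\alpha)}g_m(\lambda)|^2\lambda^{2\alpha+1}\,d\lambda\le C_m\int_0^\infty(\lambda^2+a^2)^{2k}e^{-2\lambda\theta(\lambda)}\lambda^{2\alpha+1}\,d\lambda. $$
Assuming for the moment that $\theta(\lambda)\ge 2\lambda^{-1/2}$ for $\lambda\ge 1$, these integrals are finite, and exactly as in the proof of Theorem \ref{sspthm} (see also \cite{BPR}) the condition $\int_1^\infty\theta(t)t^{-1}\,dt=\infty$ forces the sequence $\|\Delta_{\alpha,a}^kg_m\|_2$ to satisfy the Carleman condition. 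Theorem \ref{chernoffH} then gives $g_m=0$, hence $f_m=0$, for every $m$ and every $S^h_m$. Since the spherical $h$-harmonics are complete in $L^2(S^{n-1},h_\K^2\,d\sigma)$, this yields $f(r\,\cdot\,)=0$ for a.e.\ $r$, i.e.\ $f=0$.

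To remove the temporary assumption $\theta\ge 2\lambda^{-1/2}$ I would mimic the final step of Theorem \ref{sspthm}. Put $\Psi(t)=c(1+t)^{-1/2}$, so that $\int_1^\infty\Psi(t)t^{-1}\,dt<\infty$ while $\theta+\Psi\ge 2\lambda^{-1/2}$ for a suitable $c$, and use Proposition \ref{h_e} to pick a nontrivial radial $g\in C_c^\infty(\R^n)$ supported near the origin with $|\mathcal{F}_\K g(\xi)|\le Ce^{-|\xi|\Psi(|\xi|)}$. Setting $F=f\ast_\K g$, one has $\mathcal{F}_\K F=\mathcal{F}_\K f\,\mathcal{F}_\K g$ with $\mathcal{F}_\K g$ radial, so the coefficient of $F$ equals the radial factor $\mathcal{F}_\K g(\lambda\om)$ times that of $f$, and therefore obeys the hypothesis with $\theta$ replaced by $\theta+\Psi$. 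Provided $F$ still vanishes near the origin, the first part applies to $F$ and gives $F=0$, whence $\mathcal{F}_\K f\,\mathcal{F}_\K g=0$; since $g$ is nontrivial and compactly supported, $\mathcal{F}_\K g$ extends to a nontrivial entire function whose zero set is null, and we conclude $\mathcal{F}_\K f=0$, i.e.\ $f=0$.

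The main obstacle I anticipate is precisely this last vanishing claim for $F=f\ast_\K g$. In the symmetric space case the analogous step was immediate from the $G$-invariance of the metric, but here one must invoke the (more delicate) support behaviour of the Dunkl translation $\tau_x$ of a radial compactly supported function in order to guarantee that $f\ast_\K g$ inherits the vanishing of $f$ on a neighbourhood of the origin (see \cite{TY}). By comparison, the reduction to the Hankel transform through Proposition \ref{df} and the verification of the Carleman condition are routine, the former being a direct computation and the latter identical to the symmetric space case.
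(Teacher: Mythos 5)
Your proposal is correct and follows essentially the same route as the paper's proof: the reduction to the Hankel transform of $g_m(r)=r^{-m}f_m(r)$ via Proposition \ref{df}, the Carleman--Chernoff argument of Theorem \ref{chernoffH} under the temporary assumption $\theta(\lambda)\geq 2\lambda^{-1/2}$, and its removal by convolving with a radial function supplied by Proposition \ref{h_e}. The one point you flag as delicate, the vanishing of $F=f\ast_{\K}g$ near the origin, is resolved in the paper exactly as you anticipate, by the support property of the Dunkl translation of a radial compactly supported function (Proposition 3.13 of \cite{TY}).
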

   \begin{proof} Without loss of generality we can assume that $f$ vanishes on a ball $B(0,l)$ of radius $l$ for some $l>0$.  
   	Note that using the definition of Dunkl transform we have 
   	$$\int_{ S^{n-1}}\mathcal{F}_kf(\lambda,\omega)S^h_m(\om)h^2_{\K}(\om)d\sigma(\om)=c_{\K}\int_{ \R^n}\left(\int_{ S^{n-1}}E(-ix,\lambda\om)S^h_m(\om)h^2_{\K}(\om)d\sigma(\om)\right)f(x)h^2_{\K}(x)dx. $$ Now in view of Proposition \ref{df}, we have 
   	\begin{equation}
   	\int_{ S^{n-1}}\mathcal{F}_kf(\lambda,\omega)S^h_m(\om)h^2_{\K}(\om)d\sigma(\om)=\int_{ \R^n} (\lambda|x|)^{-\lambda_{\K}}J_{m+\lambda_{\K}}(\lambda|x|)S_m^h\left(\frac{x}{|x|}\right)h^2_{\K}(x)dx.
   	\end{equation}
   	Now writing $f_m(r)=\int_{ S^{n-1}}f(rw)S^h_m(w)h^2_{\K}(w)d\sigma(w)$ where $x=rw,~ w\in S^{n-1}$,the polar coordinate representation transforms the above equation to 
   	\begin{equation}
   	\int_{ S^{n-1}}\mathcal{F}_kf(\lambda,\omega)S^h_m(\om)h^2_{\K}(\om)d\sigma(\om)=C\lambda^{m}\int_{ 0}^{\infty}f_{m}(r)r^{-m}\frac{J_{\lambda_{\K}+m}(\lambda r)}{(\lambda r)^{\lambda_{\K}+m} }r^{2(\lambda_{\K}+m)+1}dr. 
   	\end{equation}
   	Again letting $g(r)=f_m(r)r^{-m}$, we have 
   	\begin{equation}
   	\lambda^{-m}	\int_{ S^{n-1}}\mathcal{F}_kf(\lambda,\omega)S^h_m(\om)h^2_{\K}(\om)d\sigma(\om)= C \mathcal{H}_{\lambda_{\K}+m}g(\lambda).
   	\end{equation}
   	Now by the hypothesis, it is clear that $g$ vanishes on a neighbourhood of zero. Let us denote $\lambda_{\K}+m$ by $\alpha$. Hence using the Plancherel theorem for Hankel transform (See Theorem \ref{hpi}), for any $k\in \mathbb{N}$ we have 
   	$$\|\Delta^k_{\alpha,a} g\|_2^2= C \int_{ 0}^{\infty}(\lambda^2+a^2)^{2k}|\mathcal{H}_{\alpha}g(\lambda)|^2\lambda^{2\alpha+1}d\lambda \leq C \int_{ 0}^{\infty}(\lambda^2+a^2)^{2k} e^{-2\lambda\theta(\lambda)}\lambda^{2\alpha+1}d\lambda. $$ Under the assumption that $\theta(\lambda)\geq 2\lambda^{-1/2}$ for $\lambda \geq1$, it is a routine matter to check that $\|\Delta^k_{\alpha,a} g\|_2$ satisfies the Carleman condition (see \cite{BPR}) and hence by Theorem \ref{chernoffH}, we obtain $g=0.$ So $f_m=0$ i.e., $(f(r.), S^h_m)_{\K}=0$ which is true for any $h$-spherical harmonics. Therefore, $f=0$. 
   	
   	To treat the general case we take $\psi(t)=c(1+t)^{-1/2}$. Then it can be easily checked that $\int_{1}^{\infty}\psi(t)t^{-1}dt<\infty.$ By Proposition \ref{h_e}, there exist a radial $h\in C^{\infty}_c(\R^n)$ supported in $B(0,l/2)$ such that $|\mathcal{F}_{\K}h(\xi)|\leq C e^{-|\xi|\psi(|\xi|)}.$ Define $F:=f\ast_{\K}h$.  Then in view of \cite[Proposition 3.13]{TY}, $F$ vanishes on $B(0,l/2)$. Moreover, since $\mathcal{F}_{\K}(F)(\lambda,\omega)=\mathcal{F}_kf(\lambda,\om)\mathcal{F}_{\K}h(\lambda)$, we see that 
   	$$\left|\lambda^{-m}\int_{ S^{n-1}}\mathcal{F}_kF(\lambda,\omega)S^h_m(\om)h^2_{\K}(\om)d\sigma(\om)\right|\leq C_me^{-\lambda(\theta+\psi)(\lambda )}$$ where $(\theta+\psi)( \lambda )\geq 2\lambda^{-1/2}$ for $\lambda \geq 1$. Hence by the previous case, $F=0$ and since $h$ is a nontrivial function, we conclude that $f=0$. This completes the proof. 
   	\end{proof}

\subsection{Ingham's theorem for the spectral projections} 
For $ \lambda >0, \,  \omega \in S^{n-1} $ we write the Dunkl transform in the form
$$  \mathcal{F}_{\K}f(\lambda\omega) = c_{\kappa}  \int_{\R^n} f(x) E( x,-i \lambda \om) h^2_{\K}(x)dx .$$ Then we can rewrite the inversion formula as
$$f(x)=c_{\kappa}\int_{ 0}^{\infty}\left(\int_{S^{n-1}}\mathcal{F}_{\K}f(\lambda\omega)E(ix,\lambda\om)h^2_{\K}(\om)d\sigma(\om)\right)\lambda^{2\lambda_{\K}+1}d\lambda.$$ But in view of the Proposition \ref{df}
$$\varphi_{\K,\lambda}(x):=\int_{S^{n-1}} E(ix,\lambda\om)h^2_{\K}(\om)d\sigma(\om)= 2^{\lambda_{\K}}a_{\K}^{-1} \frac{J_{\lambda_{\K}}(\lambda |x|)}{(\lambda |x|)^{\lambda_{\K}}}$$ which allows us to write the inversion formula in the following form:
$$f(x)=c_{\K}\int_{ 0}^{\infty}f\ast_{\K}\varphi_{\K,\lambda}(x)\lambda^{2\lambda_{\K}+1}d\lambda.$$
With the above definition, it is easy to see that 
$$  c_{\kappa}  \int_{\R^n} f\ast_{\K} \varphi_{\K,\lambda}(x) \bar{f}(x) h^2_{\K}(x)dx =  \int_{S^{n-1}} |\mathcal{F}_{\K}f(\lambda\omega)|^2 \, h^2_{\K}(\om)d\sigma(\omega) $$
and consequently the Plancherel theorem takes the form
$$  c_{\K}  \int_0^\infty  \langle  f\ast_{\K} \varphi_{\K,\lambda}, f \rangle  \lambda^{2\lambda_{\K}+1}  d\lambda =  \int_{\R^n} |f(x)|^2 h^2_{\K}(x)dx.$$
We remark that given a reasonable function $f$,   the function
\begin{equation}
\label{proj_d}
f\ast_{\K}\varphi_{\K,\lambda}(x)= \int_{S^{n-1}}\mathcal{F}_{\K}f(\lambda\omega)E(ix,\lambda\om)h^2_{\K}(\om)d\sigma(\om)
\end{equation} 
is an eigenfunction of $\Delta_{\K}$ with eigenvalue $-\lambda^2$ (See Proposition \ref{df}). Therefore, these are just the generalized spectral projections associated to the Dunkl-Laplacian. We have the following Ingham type uncertainty principle for these generalized spectral projections.\\

   \textbf{Proof of Theorem \ref{thm1.3}:}
	Assume that   $ \sup_{x \in V^c} |f \ast \varphi_{\K,\lambda}(x)| \leq C e^{-\lambda  \theta(\lambda)} $. Then as $f$ vanishes on $V$ 
	\begin{equation}
	 \int_{S^{n-1}} |\mathcal{F}_{\K}f(\lambda,\omega)|^2 \, h^2_{\K}(\om)d\sigma(\omega)=c_{\kappa}  \int_{V^c} f\ast_{\K} \varphi_{\K,\lambda}(x) \bar{f}(x) h^2_{\K}(x)dx  \leq C e^{-\lambda\theta(\lambda)}. 
	\end{equation}
	Now expanding $\mathcal{F}_{\K}f(\lambda,.)$ in terms of the $h-$Spherical harmonics, we see that the hypothesis of Theorem \ref{d_I1} is satisfied. Hence $f=0.$

	To treat the other case we consider the Dunkl spherical means $ f \ast_{\K} \mu_r $ which  has the integral representation
	$$   f\ast_{\K} \mu_r(x)  = c_{\K} \int_0^\infty  f \ast_{\K} \varphi_{\K,\lambda}(x) \, \varphi_{\K,\lambda}(r) \,\lambda^{2\lambda_{\K}+1} d\lambda. $$ 
	For any fixed $ x \in V $ we consider the radial  function $ F_x(r) =  f \ast_{\K} \mu_{r}(x) $ which vanishes in a neighbourhood of zero. Since $ \varphi_\lambda(r) $ is an eigenfunction of $ \Delta_{\lambda_{\K},a} $ with eigenvalue $ -(\lambda^2+a^2) $ it follows that
	$$  \| \Delta_{\lambda_{\K},a}^m F_x\|_2^2 = \int_0^{\infty}  (\lambda^2+a^2)^{2m} |f \ast_{\K} \varphi_{\K,\lambda}(x)|^2  \lambda^{2\lambda_{\K}+1} d\lambda \leq C \int_0^{\infty} (\lambda^2+a^2)^{2m}  e^{-2\lambda \, \theta(\lambda)} \lambda^{2\lambda_{\K}+1} d\lambda. $$
	We can check that the sequence $ \| \Delta_{\lambda_{\K},a}^m F_x \|_2 $ satisfies the Carleman condition and hence by  Chernoff's theorem we conclude $  f \ast_{\K} \mu_r(x) = 0 $ for all $ r >0.$ 
	But then the Plancherel theorem for the Hankel transform gives
	$$ \int_0^\infty | f \ast \varphi_{\K,\lambda}(x)|^2  \, \lambda^{2\lambda_{\K}+1} d\lambda =0 $$ 
	for all $ x \in V. $ Thus, the real analytic function $ f \ast \varphi_{\K,\lambda} $ vanishes on $ V $ and hence vanishes everywhere. As this is true for any $ \lambda $ we get $ f =0.$
	\qed
\begin{rem}
	This theorem is sharp in the sense that when $\int_{ 1}^{\infty}\theta(t)t^{-1}dt<\infty$, there exist $f\in C^{\infty}_c(\R^n)$ which is radial and satisfies $|f\ast_{\K}\varphi_{\K, \lambda}(x)|\leq C e^{-\lambda\theta(\lambda)}$ for all $\lambda>0.$ The proof of this is not very difficult. In fact, by Proposition \ref{h_e}, there exist compactly supported smooth radial function $f$ on $\R^n$ satisfying  $|\mathcal{F}_{\K}f(\xi)|\leq C e^{-|\xi|\theta(|\xi|)} $ for all $\xi\in \R^n.$ Now by \ref{proj_d}, it follows that $f\ast_{\K}\varphi_{\K, \lambda}(x)=\mathcal{F}_{\K}f(\lambda)\varphi_{\K, \lambda}(x)$. Finally we use the boundedness of Bessel functions  to conclude that $|f\ast_{\K}\varphi_{\K, \lambda}(x)|\leq C e^{-\lambda\theta(\lambda)}$.
\end{rem}
  \begin{rem}
  		 Recall that when $\K=0$  the above analysis reduces to the usual Fourier analysis on $\R^n.$ Consequently, the generalized spectral projections associated to the Laplacian $\Delta:=\sum_{j=1}^{n}\frac{\partial^2}{\partial x_j^2}$ on $\R^n $ take the form 
  		 $$f\ast \varphi_{\lambda}(x)= (2\pi)^{-n/2}\int_{S^{n-1}}\hat{f}(\lambda\om)e^{i\lambda x.\om}d\sigma(\om) $$ where $\hat{f}(\lambda\om)$ denotes the Euclidean Fourier transform written in polar coordinates. 
  		 So, as an immediate consequence of the above theorem we have the following uncertainty principle for the generalized spectral projections:  
  		 \begin{cor}
  		 	 	 Let $ \theta $ be a positive decreasing function defined on $ [0,\infty) $ that vanishes  at infinity. Assume that $ \int_1^\infty \theta(t) \, t^{-1} dt = \infty.$ Let $ f \in L^1(\R^n, dx) $ be a nontrivial function vanishing on an open set $ V.$  Then 
  		 	the estimate  $ \sup_{x \in V} |f \ast \varphi_{ \lambda}(x)| \leq C e^{-\lambda \,  \theta(\lambda)} $ uniformly for all $\lambda$ cannot hold. If $ V $ contains an open neighbourhood of the origin, then the uniform estimate $ \sup_{x \in V^c} |f \ast \varphi_{ \lambda}(x)| \leq C e^{-\lambda \,  \theta(\lambda)} $ can also not hold.
  		 \end{cor}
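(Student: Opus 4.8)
The plan is to obtain this corollary as the $\kappa = 0$ specialization of Theorem \ref{thm1.3}, so that essentially no new work is required beyond identifying the Dunkl-analytic objects with their Euclidean counterparts. First I would fix the zero multiplicity function $\kappa \equiv 0$ (attached to any root system $R$, the trivial one being simplest) and record the simplifications already noted in the preceding remark: since $\gamma = \sum_{v \in R^+} \kappa_v = 0$ we have $h_\kappa^2 \equiv 1$, so the weighted space $L^1(\R^n, h_\kappa^2\, dx)$ coincides with $L^1(\R^n, dx)$; the intertwining operator $V_\kappa$ is the identity, whence $E(ix, \lambda\omega) = e^{i\lambda\langle x, \omega\rangle}$ and $\mathcal{F}_\kappa$ is the Euclidean Fourier transform; the difference-differential operators reduce to $D_j = \partial/\partial x_j$, so $\Delta_\kappa = \Delta$; and the Dunkl convolution $\ast_\kappa$ becomes the ordinary convolution $\ast$.

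Second, I would verify that under these identifications the generalized spectral projection $f \ast_\kappa \varphi_{\kappa,\lambda}$ becomes precisely the object $f \ast \varphi_\lambda$ displayed in the remark. Indeed, with $\lambda_\kappa = \gamma + (n-2)/2 = (n-2)/2$, the definition of $\varphi_{\kappa,\lambda}$ together with Proposition \ref{df} gives
$$\varphi_{\kappa,\lambda}(x) = \int_{S^{n-1}} e^{i\lambda\langle x, \omega\rangle}\, d\sigma(\omega),$$
a constant multiple of $J_{(n-2)/2}(\lambda|x|)(\lambda|x|)^{-(n-2)/2}$, which is exactly the standard spherical-mean multiplier on $\R^n$; combined with $\mathcal{F}_\kappa f = \hat{f}$ this reproduces the stated expression $f \ast \varphi_\lambda(x) = (2\pi)^{-n/2}\int_{S^{n-1}} \hat{f}(\lambda\omega)\, e^{i\lambda x \cdot \omega}\, d\sigma(\omega)$.

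Finally, since a nontrivial $f \in L^1(\R^n, dx)$ vanishing on the open set $V$ is, after the above identifications, exactly a nontrivial element of $L^1(\R^n, h_\kappa^2\, dx)$ vanishing on $V$, both assertions follow verbatim from the two parts of Theorem \ref{thm1.3}: the estimate $\sup_{x \in V} |f \ast \varphi_\lambda(x)| \leq C e^{-\lambda\theta(\lambda)}$ cannot hold uniformly in $\lambda$, and when $V$ contains a neighbourhood of the origin neither can $\sup_{x \in V^c} |f \ast \varphi_\lambda(x)| \leq C e^{-\lambda\theta(\lambda)}$. I do not expect any genuine obstacle here; the only point needing care is the bookkeeping that verifies the $\kappa = 0$ reduction of the Dunkl objects, all of which is routine and was already indicated in the remark preceding the statement.
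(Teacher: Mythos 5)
Your proposal is correct and follows essentially the same route as the paper, which likewise presents this corollary as an immediate consequence of Theorem \ref{thm1.3} upon setting $\kappa = 0$, noting that $h_\kappa^2 \equiv 1$, $E(ix,\xi) = e^{i\langle x,\xi\rangle}$, $\Delta_\kappa = \Delta$, $\ast_\kappa = \ast$, and that $f \ast_\kappa \varphi_{\kappa,\lambda}$ reduces to $(2\pi)^{-n/2}\int_{S^{n-1}} \hat{f}(\lambda\omega)\, e^{i\lambda x\cdot\omega}\, d\sigma(\omega)$. Your verification of the $\kappa=0$ bookkeeping (via Proposition \ref{df} with $\lambda_\kappa = (n-2)/2$) is exactly the reduction the paper's remark indicates, so nothing further is needed.
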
 
  \end{rem}

\section*{Acknowledgments}The authors wish to thank the referee for the careful reading of the manuscript and for many useful suggestions. The first author is supported by Int. Ph.D. scholarship from Indian Institute of Science.
The second author is supported by  J. C. Bose Fellowship from D.S.T., Govt. of India.
%%%%%%%%%%%%%%%%%%%%%%%%%%%%%%%%%%%%%%%%%%%%%%%%%%%%%%

\end{document}